\DeclarePairedDelimiter{\ceil}{\lceil}{\rceil}
\DeclarePairedDelimiter{\abs}{\lvert}{\rvert}%
\newcommand{\Func}{{f}} 
\newcommand{\FuncApprox}{{p}} 
\newcommand{\ChebPoly}{{p}}  
\newcommand{\coeff}{{a}} 
\newcommand{\LinMatrix}{{A}} 
\newcommand{\ConstantVec}{{B}}
\newcommand{\IntWidth}{{\alpha}}
\newcommand{\IntCenter}{{\beta}} 
\newcommand{\LinApproxErr}{{E}} 
\newcommand{\ChebMatrix}{{C}} 
\newcommand{\degree}{{d}} 
\newcommand{\dimension}{{n}} 
\newcommand{\ChebConst}{{\kappa}} 
\newcommand{\ApproxError}{{\varepsilon}}
\newcommand{\RatioReduc}{{\alpha}}
\newcommand{\TotalReduc}{{\sigma}}
\newcommand{\DerivMatrix}{{D}}
\newcommand{\AllDerivMatrices}{\mathscr{D}} %% Formerly \bar{\DerivMatrix}
\newcommand{\MinDetDeriv}{{\omega}} 
\newcommand{\MaxFuncDeriv}{{\Delta}} 
\newcommand{\MaxFirstDeriv}{{\Lambda}} 
\newcommand{\UglyProd}{{\psi}} 
\newcommand{\BaseCaseError}{{\hat{\epsilon}}} 
\DeclareMathOperator{\adj}{adj}
\newcommand{\dsand}{\quad\text{ and } \quad} % Displaystyle _and_ for displayed equations.
\newcommand{\C}{\mathbb{C}}
\newcommand{\R}{\mathbb{R}}
\newcommand{\N}{\mathbb{N}}
\renewcommand{\vec}[1]{{{\mathbf{#1}}}}
\newcommand{\0}{{\vec  0 }}
\renewcommand{\c}{{\vec  c }} % could be a prblem with cedilla
\newcommand{\e}{{\vec  e }}
\newcommand{\f}{{\vec  f }}
\renewcommand{\k}{{\vec  k }}
\newcommand{\p}{{\vec  p }}
\renewcommand{\v}{{\vec  v }}
\newcommand{\x}{{\vec  x }}
\newcommand{\y}{{\vec  y }}
\newcommand{\z}{{\vec  z }}
\newcommand\norm[1]{\lVert#1\rVert}
\newtheorem{theorem}{Theorem}[section]
\newtheorem{corollary}[theorem]{Corollary}
\newtheorem{lemma}[theorem]{Lemma}
\theoremstyle{definition}
\newtheorem{definition}[theorem]{Definition}
\theoremstyle{remark}
\newtheorem{conjecture}[theorem]{Conjecture}
\title[Solving Multivariable Systems]{Chebyshev Subdivision and Reduction Methods for Solving Multivariable Systems of Equations}
\author[Parkinson, et al.]{Erik Parkinson}
\address{Emergent Trading}
\author[]{Kate Wall}
\address{Tufts University, Department of Mathematics}
\author[]{Jane Slagle}
\address{Tufts University, Department of Computer Science}
\author[]{Daniel Treuhaft}
\address{Brigham Young University, Department of Mathematics}
\author[]{Xander de la Bruere}
\address{Brigham Young University, Department of Mathematics}
\author[]{Samuel Goldrup}
\address{University of Chicago, Department of Economics}
\author[]{Timothy Keith}
\address{Brigham Young University, Department of Mathematics}
\author[]{Peter Call}
\address{University of Texas, Austin, Oden Institute}
\author[]{Tyler J. Jarvis}
\address{Brigham Young University, Department of Mathematics}
\email{Jarvis@math.BYU.edu}
\begin{document}

\maketitle

\begin{abstract}
We present a new algorithm for finding isolated zeros of a system of real-valued functions in a bounded interval in $\R^n$. It uses the Chebyshev proxy method combined with a mixture of subdivision, reduction methods, and elimination checks that leverage special properties of Chebyshev polynomials. We prove the method has quadratic convergence locally near simple zeros of the system.  It also finds all nonsimple zeros, but convergence to those zeros is not guaranteed to be quadratic.   We also analyze the arithmetic complexity and the numerical stability of the algorithm and provide numerical evidence in dimensions up to five that the method is both fast and accurate on a wide range of problems.   
Our tests show that the algorithm outperforms other standard methods on the problem of finding all real zeros in a bounded domain.  Our Python implementation of the algorithm is publicly available at \url{https://github.com/tylerjarvis/RootFinding}.
\end{abstract}

% \begin{keyword}
% %%TODO
% %
% %% keywords here, in the form: keyword \sep keyword
% keyword one \sep keyword two
% %% PACS codes here, in the form: \PACS code \sep code
% \PACS 0000 \sep 1111
% %% MSC codes here, in the form: \MSC code \sep code
% %% or \MSC[2008] code \sep code (2000 is the default)
% \MSC 0000 \sep 1111
% \end{keyword}

\section{Introduction} \label{section: Introduction}

This paper presents a new algorithm for finding all of the real isolated zeros of a system of smooth functions $f_1,\dots, f_n$ in a compact interval $I$ of the form $I = [a_1,b_1]\times \cdots \times [a_n,b_n] \subset \R^n$.   Our algorithm is an improvement on the Chebyshev proxy method combined with a mixture of subdivision, reduction methods, and elimination checks that leverage special properties of Chebyshev polynomials.  

\subsection{Overview and Relation to the Chebyshev Proxy Method}

The Chebyshev proxy method \cite{BoydBook} for finding real zeros of  a system $f_1,\dots, f_n$ in an interval $I\subset \R^n$ involves finding polynomials $\FuncApprox_1, \dots, \FuncApprox_n$, expressed in the Chebyshev basis, that closely approximate the functions $f_1,\dots, f_n$, and then finds the zeros of the approximating polynomials as approximate zeros of the original system.  The polynomials $\FuncApprox_k$ are proxies for the functions $f_k$ and the zeros of the polynomial system are proxies for the zeros of the original system.

The main improvements that our algorithm makes are 
    \begin{enumerate}
    \item We find bounds $\varepsilon_1,\dots, \varepsilon_n$ for the error 
    \begin{equation}\label{eq:approx-bound}
    \|f_k - \FuncApprox_k\|_{\infty} = \max_{\x\in I} |f_k(\x) - \FuncApprox_k(\x)| < \varepsilon_k 
    \end{equation}
    for each $k \in \{1,\dots, n\}$.
    \item For any family $\FuncApprox_1, \dots, \FuncApprox_n$ of polynomials in the Chebyshev basis approximating a family $f_1,\dots, f_n$ of functions, and for any given error bounds $\varepsilon_k$ satisfying \eqref{eq:approx-bound}, our method 
        \begin{enumerate}
        \item Provides a new, fast-converging algorithm to find all of the isolated zeros $\z_1, \dots, \z_D \in I$ of the system of polynomials $\FuncApprox_1,\dots, \FuncApprox_n$, and 
        \item Also finds a small interval around each $\z_k$ such that any zero of the original system $f_1,\dots, f_n$ must lie in the union of these intervals.
        \end{enumerate}
    \end{enumerate}
Thus we not only find proxies for the zeros of the original system, but also guaranteed bounds on the location of any zeros of the original system.   It is, of course, impossible to guarantee that every interval found this way must contain a root of the original system $f_1,\dots, f_n$, because a function $f_k$ could become very small (within $\varepsilon_k$ of $0$) without actually vanishing.

\subsection{Broad Picture of the Algorithm}

Assuming the functions $f_1,\dots, f_n$ are sufficiently smooth, they can be closely approximated by Chebyshev polynomials $\FuncApprox_1,\dots, \FuncApprox_n$. This approximation can be done rapidly using the fast Fourier transform (FFT) \cite{Trefethen_ApproximationTheory}.  
Our particular implementation requires that the functions $f_1,\dots, f_n$ are real-analytic on the interval $I$, which implies that the approximations converge geometrically.  This allows us, with a little more numerical work, to  construct  a bound $\varepsilon_k$ on the maximum approximation error $\max_{\x\in I} |\FuncApprox_k(\x) - f_k(\x)|\leq \varepsilon_k$ over the  interval $I$ for all $k\in \{1,\dots, n\}$.  If the functions are not analytic, one could still construct an error approximation that accounts for the correspondingly slower rate of convergence.

The approximations $\FuncApprox_1,\dots, \FuncApprox_n$, and error bounds $\varepsilon_1,\dots, \varepsilon_n$, transform the problem of finding the common zeros of $f_1,\dots, f_n$ into that of finding small subintervals of $I$ where every polynomial $\FuncApprox_k$ is within $\varepsilon_k$ of zero. Our implementation also finds all zeros of the polynomial system $\FuncApprox_1,\dots, \FuncApprox_n$.  As mentioned above, this is a variant of the \emph{Chebyshev proxy method} of Boyd (see \cite{BoydBook}), also used in the \emph{Chebfun2} package \cite{chebfun2}.

For a given subinterval of $I$ (or for $I$ itself), the algorithm first performs some elimination checks, which use  properties of the Chebyshev polynomials to try to identify that no zeros can lie in the subinterval.  If those tests fail to eliminate the subinterval, the next step is to apply a reduction method that uses properties of the Chebyshev polynomials to construct a linear approximation of the zero locus to repeatedly shrink the interval in which all the zeros must lie.  This reduction step has quadratic convergence in a neighborhood of a simple zero. This is comparable to the convergence rate of Newton's method around a simple zero, but if there are multiple isolated zeros, our method is guaranteed to find them all, whereas Newton is not.  

If repeated applications of the reduction step do not shrink the interval sufficiently, the interval is subdivided into two or more subintervals and the process is repeated, recursively, until all remaining subintervals are sufficiently small.

The main part of the algorithm, that is, finding approximate common zeros of the Chebyshev polynomials using subdivision, elimination, and reduction methods, could be thought of as a Chebyshev analogue of Mourrain and Pavone's Bernstein polynomial zero finder described in \cite{Mourrain}.  However, the reduction and elimination methods used in their paper for Bernstein polynomials do not work for Chebyshev polynomials. Our reduction and elimination methods are new and rely heavily on special properties of Chebyshev polynomials.   

When all the remaining subintervals are sufficiently small, the algorithm checks for possible duplicates occurring near the boundary of adjacent subintervals by combining any subintervals that touch each other and restarting on the resulting larger interval. 
When complete, it returns all the remaining subintervals.   All real zeros in $I$ of the system $f_1,\dots, f_n$ are guaranteed to lie in the union of these subintervals. For each final subinterval we return the zero we found for the $\FuncApprox_1,\dots, \FuncApprox_n$ as the approximate location of a candidate zero. 

This method is computationally less expensive and numerically at least as accurate as other popular algorithms for finding real zeros in a bounded interval.   Many other algorithms, when applied to this problem, have the disadvantage of searching for all zeros in $\C^n$ instead of only real zeros in the bounded interval $I$; these include homotopy methods like \emph{Bertini} \cite{Bertini}, eigenvalue-based methods like those of M\"oller--Stetter \cite{StetterBook} or \cite{Telen}, and resultant-based methods (as used in \emph{Chebfun2}).  \emph{Chebfun2} uses subdivision to get a good low-degree Chebyshev approximation before using resultants, so it does not find all the complex zeros of the original system, but rather the complex zeros of the various approximations.  Mourrain and Pavone's Berstein-basis algorithm does not suffer from the disadvantage of finding unwanted complex zeros, but only works for polynomials expressed in the Bernstein basis.  As an alternative to our algorithm, it could be worth exploring the feasibility of using the Chebyshev proxy method and then converting from the Chebyshev basis to the Bernstein basis and applying the Mourrain--Pavone algorithm.  This particular basis conversion is fairly well conditioned \cite{Rabbah}, so the conversion should not introduce much additional error. 

\subsection{Why Chebyshev?}

The two main parts of the algorithm are (1.) Polynomial approximation and (2.) Interval elimination and reduction.  If an interval cannot be eliminated or sufficiently reduced, then it is subdivided and the algorithm is applied to the new subintervals.  The Chebyshev basis is important for both approximation and interval elimination and reduction, as we now describe:
\begin{enumerate}
    \item \textbf{Approximation:} The Chebyshev basis is very well suited to fast, accurate approximation of smooth functions on a bounded interval, and to finding a good upper bound on the error of that approximation.  
        \begin{enumerate}
        \item We can use the FFT to make a Chebyshev approximation of degree $d$ in $O(d^n \log(d))$ time, which is significantly cheaper than  approximation in other polynomial bases.
        \item The error of the FFT-based Chebyshev approximation drops off geometrically with the degree $d$ if the function being approximated is sufficiently smooth (real analytic).  
        This means even relatively low-degree Chebyshev approximations can be very accurate.
        \item The coefficients of the series expansion in the Chebyshev basis of a sufficiently smooth function decrease geometrically with degree, which allows us to bound the total error of the approximation. 
        \end{enumerate}
    \item \textbf{Interval Elimination and Reduction:} 
    Given a polynomial in the Chebyshev basis, we can obtain good bounds on the truncation error that results from discarding some of the terms in the polynomial (see Equation~\eqref{eq:CoeffBound}):
        \begin{enumerate}
            \item This gives us several tests to eliminate intervals that cannot contain a zero.
            \item This allows us to make good linear approximations of the zero locus of the polynomial, which leads to both of our interval reduction methods.
        \end{enumerate}
\end{enumerate}
Several other properties of the Chebyshev basis, including  the fact that the basis functions are orthogonal in an appropriate inner product, also play a role in the proof that our interval reduction method shrinks quadratically to each simple zero.  

\subsection{Outline}
The outline of this paper is as follows: We detail our new algorithm in section \ref{section: our_algorithm} and give a proof of its quadratic convergence in Section~\ref{section: convergence}. Sections~\ref{section: complexity} and \ref{section: stability} discuss the computational complexity and stability of the algorithm, respectively. Finally, numerical results on various test suites and comparisons to other rootfinding solvers are presented in Section~\ref{section: numerical_tests}.

\section{Detailed Description of the Algorithm} \label{section: our_algorithm}
Our algorithm has two main steps. In the first step, it accepts a set of  functions $\Func_1,\dots, \Func_n$ from $\mathbb{R}^n$ to $\mathbb{R}$ that are smooth on a rectangular interval $[a_1,b_1]\times \cdots \times [a_n,b_n]\subset \R^n$.  After a change of variables to transform the interval to $[-1,1]^n$, it approximates each function $f$  as a polynomial 
\begin{equation}\label{eq:p_i-expansion}
\FuncApprox({\x}) = \sum_{\substack{\k = (k_1,\dots, k_n)\\ k_j\ge 0}} \coeff_{\k}T_{k_1}(x_1) \dots T_{k_n}(x_n),
\end{equation}
expressed in the Chebyshev basis, where each $T_k(x)$ is the Chebyshev polynomial defined recursively by $T_0(x) = 1$, $T_1(x) = x$,  and $T_{k+1}(x) = 2x T_k(x) - T_{k-1}(x)$ for $k \ge 1$.   Equivalently, the Chebyshev polynomials can be defined by the relation $T_k(\cos(t)) = \cos(k t)$.  We call the products of the form $T_{k_1}(x_1)\cdots T_{k_n}(x_n)$ the \emph{Chebyshev basis elements}.   In addition to approximating each $f_i$, the algorithm also computes an upper bound  $\ApproxError_i$ on the approximation error, satisfying  $|f_i({\x}) - \FuncApprox_i({\x})| \le \ApproxError_i$ for all  ${\x} \in [-1,1]^n$.

The second step is a Chebyshev polynomial solver. Given polynomials $\FuncApprox_i$ expressed in the Chebyshev basis and approximation error bounds $\ApproxError_i$, it returns the common zeros of the approximating polynomials $\FuncApprox_1,\dots, \FuncApprox_n$, along with bounding boxes within which any zeros of the functions $\Func_1,\dots, \Func_n$ must reside. If the error bounds $\ApproxError_i$ are large, the bounding boxes may contain multiple zeros of the system $\{\FuncApprox_i\}$ or no zeros, but any common zero of the $\Func_i$ must be contained in the union of the bounding boxes.

We now describe these two steps in detail.

\subsection{Chebyshev Proxy}

Any smooth function on a compact interval $[a_1,b_1]\times \cdots \times [a_n,b_n]\subset \R^n$ can be well-approximated with Chebyshev approximations of sufficiently high degree; see \cite{Trefethen_ApproximationTheory}.
After a linear change of coordinates to transform the interval into $I = [-1,1]^n$,
there is a fast algorithm \cite[Section 9.5]{ACMEV2}, based on the FFT, for finding the coefficients of the Chebyshev basis for the degree-$d$ polynomial approximation on $I$ by evaluating the function at the $(d+1)^n$ \emph{Chebyshev points}, that is, the points with each coordinate of the form $\cos\left(\frac{j \pi}{d}\right)$ for some $j\in \{0,\dots, d\}$.  This algorithm also works well with a different degree $d_i$ in each dimension, evaluating on the corresponding grid of   $\prod_{i=1}^n (d_i+1)$  Chebyshev points.

We need two things when approximating a function $\Func$.  These are, first, to determine what the degree $d_i$ in each coordinate should be for the polynomial approximation $\FuncApprox$ to give a sufficiently close approximation, and second, to determine a good upper bound $\ApproxError \ge \max_{\x\in I} |\Func(\x) - \FuncApprox|$ for the error of the approximation. Note that every $f_i$ can be approximated with a different amount of numerical precision.  For example, $\sin(x)$ can be computed to 16 digits of precision, but written as $\sin(x+10^5\pi)$ it can only be computed to about 12 digits of precision, because of the loss of precision in the evaluation.  Because the FFT is stable \cite[Section 24.1]{Higham}, this error in function evaluation is the main constraint on the precision of a Chebyshev approximation $\FuncApprox_i$. 
Rather than choose each $\varepsilon_i$ in advance and try to find a degree $d_i$ that will approximate within $\varepsilon_i$, we instead try to find the degree $d_i$ that achieves the smallest possible $\varepsilon_i$ for each $f_i$, within the limits imposed by the evaluation error of each $f_i$.

Our approach to finding the approximation degrees and an upper bound on the error is similar to that used by Boyd in one dimension \cite{Boyd2}. Other methods for computing Chebyshev approximations like those used in \emph{Chebfun2} in two \cite{Chebfun2DPaper} or three dimensions \cite{Chebfun3DPaper}, could also be used. 

\subsubsection{Approximation Degrees}\label{sec:ApproxDegree}

We first compute the numerical degree of the function $\Func_k$ in each coordinate $i$, meaning the degree $d_i$ of the polynomial approximation $\FuncApprox_k$ in coordinate $i$, starting with an initial guess of degree eight.  Temporarily setting the degrees of approximation of all other coordinates to five allows successive approximations to be quick, and numerical experiments seem to show that it gives a sufficient number of interpolation points. For these given choices of degree, use the fast approximation algorithm to get a possible approximation.  If the last five terms of the approximation in coordinate $i$ are not all within a predetermined tolerance of zero (we default to $10^{-10}$ times the maximum function evaluation), double the degree in coordinate $i$  and reapproximate, repeating until the last five terms of the approximation are all sufficiently small. This gives a candidate  approximation degree $d$.

To ensure the resulting approximation is sufficiently accurate, compare this degree-$d$ approximation to the  approximation of degree $2d+1$ and check that the average difference in coefficients is less than the desired tolerance. Note that while Boyd uses degree $2d$ for this check, we use degree $2d+1$ because it makes higher-degree coefficients less likely to alias to the same value.  

The final degree $d_i$ in the current coordinate $i$ is then determined by taking the maximum absolute value of the coefficients of terms with degree at least $\frac{3d}{2}$ (which are assumed to have converged to machine epsilon), doubling it, and then choosing the degree $d_i$ to correspond to the last coefficient having magnitude greater than this.  Repeating the process for each coordinate gives a list of approximation degrees $d_1,\dots, d_n$, which are then used to obtain one final full approximation $\FuncApprox_k$. This is repeated for each function $\Func_k$.

\subsubsection{Bounding Approximation Error}\label{sec:ApproxError}

We also need a bound $\ApproxError_k$  for the 
error of the final approximation $\FuncApprox_k$ of $\Func_k$; that is, we seek a small $\ApproxError_k$ satisfying $\max_{\x\in I} |\FuncApprox_k(\x) - f_k(\x)|\le \ApproxError_k$  We do this by using the fact that Chebyshev approximations converge geometrically (or better) for functions that are real analytic on the interval $I$; see \cite[Theorem 8.2]{Trefethen_ApproximationTheory}.  
First approximate the geometric convergence factor by comparing the largest coefficient and the coefficient corresponding to the final degree. Using this as a bound on the rate of convergence, we compute the infinite sum of the bounding geometric terms corresponding to the coefficients left out of the approximation, and use that sum as an upper bound for the norm of the error of the approximation.

The idea behind bounding the approximation error is most easily seen in one dimension.  
Assume $f(x)$ is real analytic and has degree-$d$ Chebyshev approximation $p(x) = \sum_{k=0}^d a_k T_k(x)$. 
We treat $p(x)$ as a truncation of the Chebyshev series representation $f = \sum_{k=0}^\infty a_k T_k(x)$ of $f$.
We assume that $d$ is large enough so that the coefficients converge geometrically to $0$ at some rate $\rho$. That is, for $k > d$ we have
\[
\abs{a_k} \le \abs{a_d} \rho^{d-k}
\]
 Let $a_m$ be the coefficient of largest absolute value. We estimate $\rho$ as $\rho = \abs{\frac{a_m}{a_d}}^{d-m}$.  Since $\abs{T_k(x)}\le 1$ for all $k$, we can now bound the error $\abs{f(x) - p(x)} = \sum_{k>d} a_k T_k(x)$ of the approximation as 
 \begin{align*}
 \abs{f(x) - p(x)} & \le \sum_{k=d+1}^\infty \abs{a_k} \le \sum_{k=d+1}^\infty \abs{a_d} \rho^{d-k}
%  & = \abs{a_d} \sum_{k=1}^\infty \rho^{-k} 
   = \frac{\abs{a_d}}{\rho-1}.
 \end{align*}
We take this  $\ApproxError = \frac{\abs{a_d}}{\rho-1}$ as the upper bound for the approximation error.

The computation of the approximation error bound $\ApproxError$ in $n>1$ dimensions is similar to one dimension, but involves some additional bookkeeping and is messier to write out.

\subsubsection{Functions with Large Dynamic Range}

We add one extra step in our approximation algorithm in order to handle functions with a large dynamic range. For some functions a single Chebyshev approximation might not be sufficient to approximate it on the given interval. For example, to approximate $\Func(x) = e^x\sin{x}$ on $[0,500]$, the best approximation error we can hope for, using double precision floating point, is $\sim 10^{200}$ because the function attains a maximum magnitude of $10^{216}$ on the interval. This makes it impossible to find zeros accurately in the part of the interval where $x$ and the function values are small. To remedy this, we include a parameter \texttt{maxIntervalSize}, which we give the default value of $10^{-5}$. After solving a system of polynomials, if any of the resulting bounding intervals  is larger than \texttt{maxIntervalSize} in any dimension, we reapproximate the function on that interval and re-solve the system on that interval.  Algorithm~\ref{alg: ChebProxySolve} gives this in pseudocode.

For the example of $e^x\sin{x}$ above, the bounding intervals found by our algorithm on the right are small enough (less than \texttt{maxIntervalSize}), but the leftmost interval is $[0,471]$ because in that subinterval the function  is numerically indistinguishable from the zero function, at least for purposes of constructing the Chebyshev proxy.  Thus the function must be reapproximated on the interval $[0,471]$  and re-solved.  After re-solving most of the resulting intervals are sufficiently small, but the leftmost interval is $[0,443]$, and so the function must be reapproximated on that interval and re-solved.  This happens repeatedly, where the function generally has good resolution on the right side of each subsequent interval, with zeros within about 30 of the right side being found to good precision; but the left side (everything more than 30 from the right side) being lumped into a single interval, forcing repeated reapproximating and re-solving the leftmost interval until it finally reaches $[0,31]$. Thus each zero is found in an interval where it is within $\sim 30$ of the right hand side of the interval.
Doing this, our method finds each of the $160$ zeros with an accuracy of $10^{-5}$. This accuracy could, of course, be improved with a smaller choice of \texttt{maxIntervalSize} or a more sophisticated method of deciding when to re-solve.

\begin{algorithm}
\begin{algorithmic}[1]
\caption{Chebyshev Proxy Solver}
\label{alg: ChebProxySolve}
\Procedure{\texttt{Cheb\_Proxy\_Solve}}{$\f, \mathcal{I}$}
\Comment{List of functions and interval}
\State $\mathbf{p}, \boldsymbol{\ApproxError} \leftarrow \texttt{Cheb\_Approximate}(\f, \mathcal{I})$
\State $\texttt{roots}, \texttt{intervals} \leftarrow \texttt{Cheb\_Solve}(\mathbf{p}, \boldsymbol{\ApproxError}, \mathcal{I})$
\Comment{Initial Solve}
\For{$r_k, \mathcal{I}_k$ in (\texttt{roots}, \texttt{intervals})}
\Comment{Loop over root/interval pairs}
    \If{$\texttt{Size}(\mathcal{I}_k) > \texttt{maxIntervalSize} \textbf{ and } \mathcal{I}_k$ != $\mathcal{I}$}
    \Comment{Re-solve}
        \State Remove $\mathcal{I}_k$ from \texttt{intervals}
        \State Remove $r_k$ from \texttt{roots}
        \State \texttt{newRoots},\texttt{newIntervals} $ \leftarrow \texttt{Cheb\_Proxy\_Solve}(\f, \mathcal{I}_k)$
        \State $\texttt{roots} \leftarrow \texttt{Concat}(\texttt{newRoots}, \texttt{roots})$
        \State $\texttt{intervals} \leftarrow \texttt{Concat}(\texttt{newIntervals}, \texttt{intervals})$
    \EndIf
\EndFor
\State \Return \texttt{roots}, \texttt{intervals}
\EndProcedure
\end{algorithmic}
\end{algorithm}

\subsection{Chebyshev Solver}\label{sec:Chebyshev-Solver}

The main part of our algorithm is the Chebyshev solver (called \texttt{Cheb\_Solve} in Algorithms~\ref{alg: ChebProxySolve} and \ref{alg: ChebSolve}), which takes approximating polynomials $\FuncApprox_1, \dots, \FuncApprox_n$, expressed in the Chebyshev basis, and corresponding approximation error bounds $\ApproxError_1,\dots, \ApproxError_n$ and returns small subintervals of $I = [-1,1]^n$ in which any common real zeros in $I$ of the functions $\Func_1, \dots, \Func_n$ must lie.  It also returns its best estimate of the zeros of the proxy system $\FuncApprox_1, \dots, \FuncApprox_n$ as candidate zeros for the original system $\Func_1, \dots, \Func_n$.
Pseudocode for this solver is given in  Algorithm~\ref{alg: ChebSolve}. 
 It consists of two main steps, applied recursively: First use some checks to see if we can discard the current search interval, and if not, shrink (reduce) the interval as much as possible. If we can neither discard nor shrink the interval further, subdivide the search interval in some or all of the dimensions. Second, apply a simple linear transformation to express the current polynomials on the new, smaller, search intervals as Chebyshev approximations on the standard interval $[-1,1]^n$ (instead of reapproximating the original functions $\Func_1,\dots, \Func_n$ on the new intervals). 
 
 Recursively call the main solver on the new intervals and polynomials.  Once the resulting intervals are sufficiently small, we have the bounding box. Set $\boldsymbol{\ApproxError}$ to $\mathbf{0}$ and solve down to a point to get the zero of the proxy system $\FuncApprox_1, \dots, \FuncApprox_n$. Return this point along with the bounding box.
 
 Finally, when moving back up the recursion, combine bounding boxes that touch and re-solve the system on the resulting combined intervals.  This handles roots whose true bounding boxes cross the boundaries of distinct search intervals.

\subsubsection{Elimination Checks}\label{sec:exclusion}

Given a system of Chebyshev approximations $\FuncApprox_1 \dots \FuncApprox_n$ and  corresponding error bounds $\ApproxError_1 \dots \ApproxError_n$ on the interval $[-1,1]^n$, we use some simple elimination checks to exclude the existence of common zeros of the original system $\Func_1 \dots \Func_n$ in the interval $[-1,1]^n$.   We use two such checks, which we call the \emph{constant term} check and the \emph{quadratic} check, respectively.   The original system $f_1,\dots, f_n$ and the proxy polynomial system $\FuncApprox_1,\dots, \FuncApprox_n$ are both guaranteed to have no zeros in any subinterval eliminated by any of these checks.  These are called \texttt{ExclusionChecks} in Algorithm~\ref{alg: ChebSolve} (Line~2).  %\ref{lstline:exclusion}
%TJ: I cannot get the line ref numbers to work correctly in the algorithms! Can you?

We assume that each polynomial $\FuncApprox({\x})$ is written as a sum of Chebyshev basis elements $T_{k_1}(x_1)\cdots T_{k_n}(x_n)$ with coefficients $a_{\k}$ as in \eqref{eq:p_i-expansion}.  
For any $x\in [-1,1]$ the term $T_k(x)$ can be written as a cosine, which implies that $\abs{T_{k_1}(x_1)\cdots T_{k_n}(x_n)}\le 1$ for all $\x=(x_1,\dots, x_n)\in [-1,1]^n$ and any $\k=(k_1,\dots, k_n)\in \N^{n}$ (here $\N$ denotes the natural numbers $\{0,1,\dots.\}$, including $0$).  Thus any $\FuncApprox({\x}) = \sum \coeff_{\k}T_{k_1}(x_1) \cdots T_{k_n}(x_n)$, is bounded by 
\begin{equation}\label{eq:preCoeffBound}
\abs{\FuncApprox({\x})} \le \sum \abs{\coeff_{\k}}
\end{equation}
for all ${\x}\in [-1,1]^n$.
This last bound is extremely useful and comes up repeatedly, so we will use the following notation for it.
\begin{definition}\label{def:coef-bound}
    For a polynomial $p$ written as $p=\sum_{\k} \coeff_{\k}T_{k_1}(x_1) \cdots T_{k_n}(x_n)$ in the Chebyshev basis, we define 
    \[
    \texttt{CoeffBound}(\FuncApprox)  = \sum_{\k} \abs{\coeff_{\k}}
    \]
    and rewrite the bound \eqref{eq:preCoeffBound} as 
    \begin{equation}\label{eq:CoeffBound}
           \abs{\FuncApprox({\x})} \le \texttt{CoeffBound}(\FuncApprox).
    \end{equation}
\end{definition}

Both of our exclusion checks leverage the bound \eqref{eq:CoeffBound} 
to determine when one of the polynomials cannot vanish on the interval. They do this by splitting each approximation into $\FuncApprox_i({\x}) = q_i({\x}) + r_i({\x})$, where $q_i$ has low degree, and then showing that $\abs{q_i({\x})} > \texttt{CoeffBound}(r_i) + \ApproxError_i$ on the interval, implying that $\abs{\Func_i({\x})} > 0$. We first use a fast check where $q_i$ is just the constant term $\coeff_\0$ of $\FuncApprox_i$, and then we follow that with a slower but more powerful check where $q_i$ is the ``quadratic'' part of $\FuncApprox_i$, meaning the sum of all terms of the form $\coeff_{\k}T_{k_1}(x_1) \cdots T_{k_n}(x_n)$ with $\|\k\|_1 :=k_1 + \cdots + k_n \le 2$, and $r_i$ is the sum of all terms with $\|\k\|_1 > 2$.  In our performance tests the obvious linear analogue of these checks has not provided enough benefit to be worth the (relatively minimal) computational cost, so we do not use it.

\subsubsection{Interval Reduction}\label{sec:reduction}

When the exclusion checks fail to eliminate an interval, either because there is a zero in the interval or because the checks are not sufficiently powerful, then we use a reduction method to shrink the interval and zoom in on any potential zero.  If there is one isolated zero in the interval, then this method has quadratic convergence to that zero, as shown in Section~\ref{section: convergence}.  The combination of both reduction methods of this section is called \texttt{ReductionMethod} in Algorithm~\ref{lstline:reduction} Line~7. %\ref{lstline:reduction} {alg: ChebSolve}.
%%TJ I cannot get the line reference numbers to work right! But the reference goes to the right location (weird).

We use the following notation when discussing the reduction method and throughout the rest of the paper.
\begin{definition} \label{def: Notation Checks}
    Notation for the reduction methods.  
    \begin{itemize}
        \item $\LinMatrix$: $\dimension \times \dimension$ matrix where $\LinMatrix_{i,j}$ is the coefficient of the linear term in dimension $j$ of $\FuncApprox_i$.  If $\e_j = (0,\dots,0, 1, 0\dots)$ is the index vector with $1$ in the $j$th position and zeros elsewhere, then $\LinMatrix_{i,j} =\coeff_{\e_j}$ in the expansion \eqref{eq:p_i-expansion} of $\FuncApprox_i$.
        \item $\ConstantVec$: 
        $\dimension \times 1$ vector of constant terms. $\ConstantVec_i$ is the coefficient   $\coeff_{\0}$ of the constant term in the $i$th polynomial $\FuncApprox_i$.
        
        \item $\LinApproxErr$: $\dimension \times 1$ vector bound on the total error of Chebyshev approximation and linear approximation combined. If we write $\norm{\k}_1 = \sum{k_i}$, and use the expansion  \eqref{eq:p_i-expansion} of $\FuncApprox_i$, then the $i$th entry of $\LinApproxErr$ is $\LinApproxErr_i \coloneqq \ApproxError_i + \sum_{\norm{\k}_1 \geq 2} \abs{\coeff_{\k}}$  
    \end{itemize}
\end{definition}

\paragraph{First Reduction Method}
The first reduction method iterates through every coordinate (indexed by $j$) and every polynomial $\FuncApprox_i({\x})$ (indexed by $i$) and splits each approximation into $\FuncApprox_i({\x}) = q_i({\x}) + r_i({\x})$ with $q_i({\x}) = \LinMatrix_{ij}x_j + B_i$ the constant plus only the linear term in $x_j$ of the Chebyshev expansion \eqref{eq:p_i-expansion}, while  $r_i(\x)$ includes all the remaining terms (higher order terms and the linear terms in variables $x_k$ with $k\neq j$).  At any zero  $\bar{{\x}} = (\bar{x}_1,\dots, \bar{x}_n)$, we have the bound  $\abs{q_i(\bar{{x}_j})} \le \ApproxError_i + \texttt{CoeffBound}(r_i)$, which gives the bounds
\begin{equation}\label{eq:first-reduction}
\frac{-B_i - \ApproxError_i - \texttt{CoeffBound}(r_i)}{\abs{\LinMatrix_{ij}}} \le \bar{{x}}_j \le \frac{-B_i + \ApproxError_i + \texttt{CoeffBound}(r_i)}{\abs{\LinMatrix_{ij}}}.
\end{equation}
The intersection of the standard interval $[-1,1]^n$ with all of the intervals~\eqref{eq:first-reduction} for all $i$ and $j$ gives the first reduction.

\paragraph{Second Reduction Method}
The second reduction method splits $\FuncApprox_i({\x}) = q_i(\x) + r_i(\x)$ with $q_i({\x}) = \sum_{j=0}^n \LinMatrix_{i,j}x_j + \ConstantVec_i$ containing all of the linear terms (not just the term with one particular $x_j$). Any zero $\bar{{\x}}$ must satisfy $\bar{\x} = \LinMatrix^{-1}({\y} - \ConstantVec)$ for some $\y = (y_1,\dots, y_n)$ with $\abs{y_i} \le \ApproxError_i + \texttt{CoeffBound}(r_i) = \LinApproxErr_i$. This defines a parallelepiped in $\R^n$ in which the zero $\bar{\x}$ must lie.  

Rather than finding the smallest interval that contains this parallelepiped, we just bound each coordinate of the vertices of the parallelepiped as follows. 
Each vertex $\v$ can be written as $\v = \LinMatrix^{-1}({\z} - \ConstantVec)$ for some choice of $\z$ with the $i$th coordinate $z_i \in \{-E_i, E_i\}$. So all zeros must lie in the subinterval centered at $-\LinMatrix^{-1} \ConstantVec$ with width $\sum_{k=1}^n (\abs{\LinMatrix^{-1}_{ik}} \LinApproxErr)$ in coordinate $i$.
This reduction method converges quadratically, as shown in Section~\ref{section: convergence}.

In higher dimensions (say, six and up) it is likely that the overall performance of our  Chebyshev rootfinding method would be improved if the coordinatewise bounding method described above were replaced by an efficient method for finding the smallest interval of the form $[a_1,b_1]\times \cdots \times[a_n,b_n]$ containing the intersection of this parallelepiped with the interval $[-1,1]^n$ using.  This could be done, for example, with a good implementation of the standard algorithm for enumerating all the vertices of a convex polytope defined by intersecting halfspaces.

\subsubsection{Chebyshev Transformation Matrix}\label{sec:transformation}

After any reduction or subdivision, the resulting new subinterval can be rescaled  to the standard interval $[-1,1]^n$ with a linear transformation, and the polynomial approximations $\FuncApprox_1, \dots, \FuncApprox_n$ must be reexpressed  in terms of the standard Chebyshev basis on the standard interval.  This operation is called \texttt{Transform} in Algorithm~\ref{lstline:reapprox} Lines~12--24 and 18.  Note that the new Chebyshev approximations have degrees no larger than the degree of the previous approximations did, and usually smaller degrees, because they approximate the functions in a smaller neighborhood.  Differentiability of the original functions suggests that once we have zoomed in small enough, the approximations will be linear.  Details on how the degree changes with this step are given in Section~\ref{subsection: Tau}.

Over the course of solving a system, we often need thousands of such reexpressions. Generating an entirely new approximation of $\Func_1,\dots, \Func_n$ on each new interval would be slow, as this requires a large number of function evaluations.  To avoid this, observe that rewriting a rescaled one-variable Chebyshev polynomial $T_k(\IntWidth x + \IntCenter)$ in terms of $T_{0}(x), \dots, T_k(x)$ is a linear transformation and can be written in matrix form.  Thus for any one-variable Chebyshev polynomial $\FuncApprox(x) = \sum_{k=0}^d \coeff_k T_k(x)$, we can write $\FuncApprox(\IntWidth x + \IntCenter) = \hat{\FuncApprox}(x) = \sum_{k=0}^d \hat{\coeff}_k T_k(x)$, where $\hat{\coeff} = C(\IntWidth, \IntCenter)a$ for some matrix $C(\IntWidth, \IntCenter)$, which we call the \emph{Chebyshev transformation matrix parameterized by $\IntWidth$ and $\IntCenter$}. For convenience, we refer to $C(\IntWidth, \IntCenter)$ as simply $C$ where the choice of $\IntWidth, \IntCenter$ is understood. Using the matrix $C$ allows us to transform any Chebyshev polynomial to a new interval using only matrix multiplication. In higher dimensions, we apply the transformation to each coordinate sequentially. We note that a similar matrix can be constructed for every polynomial basis.\footnote{Any such matrix can be written as $PDP^{-1}$, where $P$ is the change of basis matrix from the power basis, and $D$ is the transformation matrix for the power basis where the $n$th column is the expansion of $(\alpha + \beta)^n$.} 

The entry $C_{ij}$ of the matrix $C$ is the coefficient of $T_i(x)$ when expanding $T_j(\IntWidth x + \IntCenter)$.  The recursive formula for Chebyshev polynomials allows us to construct $C$ iteratively as follows.  Observe that $T_0(\IntWidth x + \IntCenter) = 1$, and $T_1(\IntWidth x + \IntCenter) = \IntWidth T_1(x) + \IntCenter$, giving a base case of $C_{00} = 1$, $C_{01} = \IntCenter$, and $C_{11} = \IntWidth$. Using the recursion $T_{k+1}(x) = 2xT_k(x) - T_{k-1}(x)$, gives 
\[
T_{k+1}(\IntWidth x + \IntCenter) = 2(\IntWidth x + \IntCenter)\sum_{i=0}^k C_{ik}T_i(x) - \sum_{i=0}^{k-1} C_{i,k-1}T_i(x).
\]
Because $C_{ij} = 0$ for $i > j$, and $2xT_k(x) = T_{k+1}(x) + T_{|k-1|}(x)$, this becomes
\[
T_{k+1}(\IntWidth x + \IntCenter) = \sum_{i=0}^k  2\IntCenter C_{ik}T_i(x) - C_{i,k-1}T_i(x) + \IntWidth C_{ik} (T_{i+1}(x) + T_{\abs{i-1}}(x)).
\]
Lining up all the coefficients of $T_i(x)$ gives
\begin{equation} 
\label{recurr_rel}
    C_{i,k+1} = 2\IntCenter C_{ik} - C_{i,k-1} + \IntWidth(C_{i+1,k} + \eta_i C_{i-1,k}),
\end{equation}
where
\[\eta_i = \begin{cases} 
    0 & i = 0 \\
    2 & i = 1 \\
    1 & \text{otherwise.} 
   \end{cases}
\]
Thus column $k+1$ of $C$ can be computed given just columns $k$ and $k-1$. So in practice we never compute and store all of $C$ simultaneously. Instead, each column is recursively computed from the two columns preceding it, and each column is applied to the transformation before the next column is computed. So each column may be discarded once it has been used to compute the two subsequent columns.  The idea of this algorithm is not unique to the Chebyshev basis---a similar algorithm could be constructed for any basis with a short recursion relation.  

Using the Chebyshev transformation matrix $C$ for the reapproximations in this way is both  fast and numerically well-behaved, as shown in Section~\ref{section: stability}.   We expect that this transformation should be computable for degree-$d$ polynomials in $d\log{d}$ time,  which should speed up the solver considerably for systems of large degree.

\subsubsection{Subdivision}\label{sec:subdivide}

As previously outlined, our Chebyshev solver executes checks to eliminate intervals and reduction methods to shrink intervals, but if it cannot eliminate or shrink the interval further, it subdivides the interval.   This operation is called \texttt{SplitInterval} in Algorithm~\ref{lstline:subdivide} Line 17. Notice that as we subdivide, an effect of ``zooming-in" is that smooth functions can be better approximated by polynomials of lower degree. Thus we expect the required approximation degree to decrease as we recurse.  For details, see Section~\ref{subsection: Tau}.

Some naturally occurring systems of functions on an interval, especially systems designed by humans, may have zeros in special locations, including along the hyperplanes dividing the initial interval exactly in half.  In order to avoid the possibility of subdividing along a hyperplane that contains a zero, the very first time we subdivide the original interval we split slightly off half, using a predetermined random number. Subsequent subdivisions split exactly in half for numerical benefits (multiplication and division by $2$ are especially well behaved in  floating-point arithmetic). 

\subsubsection{Recursion}\label{sec:recursion}

After subdivision, our algorithm  recursively calls itself on each of the resulting subintervals, repeating this process until it has found each zero within the original interval.

\hypertarget{par:base-case}{}
\paragraph{Recursion Base Case}
The recursion needs a base case to determine when it has zoomed in sufficiently on a zero and should stop splitting or reducing the interval. This is referenced in Algorithm~\ref{lstline:basecase} Line~9.
%%TJ line refs are broken in algorithm.
Recall that in the second reduction method, $r_i$ represents the terms of $\ChebPoly_i$ of degree at least $2$. If the reduction methods fail to shrink the search interval because $\texttt{CoeffBound}(r_i)$ is large, we should continue subdividing the interval, since transforming the approximation to a smaller interval will shrink the terms in $r_i(x)$ faster than the linear terms (as explained in Section~\ref{section: convergence}). 
If, however, the reduction methods fail to shrink the search interval because $\ApproxError_i$ is large, we should stop subdividing, since transforming the approximation to smaller subintervals will cause the linear terms to shrink while $\ApproxError_i$ remains unchanged. 

To determine which of these cases holds, we set $r_i = 0$ and rerun the reduction methods. Specifically, we rerun the equations in \ref{sec:reduction} after setting $\texttt{CoeffBound}(r_i)$ to $0$. If the size of the resulting interval is not at least $2.5^n$ times smaller than the current interval, we have reached the base case and stop subdividing. The threshold $2.5^n$ is chosen based on Theorem~\ref{thm: ND Linear Check Convergence Error Bound}, and it seems to give good performance in our numerical testing. 

\hypertarget{par:FinalRoot}{}
\paragraph{Get Final Root} When the algorithm reaches the base case, it has found an interval in which a zero may lie. To find a final point to return as the approximate zero, set $\boldsymbol{\ApproxError} = \0$ and continue to zoom in on the zero as before until the interval converges to a point (a true zero of the proxy system $\FuncApprox_1,\dots, \FuncApprox_n$.  This step is called \texttt{SolveFinalRoot} in Algorithm~\ref{lstline:FinalRoot} Line~10.
%TJ line refs are broken in algorithm.
This convergence occurs quickly, as the convergence is R-quadratic as shown in \ref{thm:quadratic}.

During this final step of assuming $0$ error, if the algorithm eliminates the entire interval by an exclusion check, 
the interval is \emph{not} discarded, because $\FuncApprox({\x})$ still gets within $\ApproxError$ of $0$, and we cannot way whether $\Func({\x})=0$ or not. We still return the interval, but also raise a warning that it may be spurious. Similarly, if multiple roots are found in this final step, this indicates that there may be a double (or higher degree) zero in the system. We return all roots found and similarly raise a warning about possible duplicate zeros. \hypertarget{par:merge}{}

\paragraph{Merging Intervals}
As the algorithm returns from the recursion, it checks whether any of the returned bounding boxes share a boundary. If so, it takes the smallest interval that contains the touching boxes, and re-solves on that interval to determine if the touching intervals correspond to the same zero or different zeros. If this interval is all of $[-1,1]^n$, then it just combines all the intervals together, marking extras as potential duplicates.

\paragraph{Chebyshev Solver Summary} In summary, the whole algorithm is as follows: for each interval run the exclusion checks to see if it should be discarded. If not, run the reduction methods to try to zoom in on a zero. If the interval shrinks sufficiently, reapproximate and solve on the new interval. If it does not shrink sufficiently, check the base case to see if we should stop and return a zero. If we have not hit the base case, split the interval into subintervals and solve each of those recursively, combining and resolving on resulting intervals that touch.  The full algorithm is given in pseudocode in Algorithm~\ref{alg: ChebSolve}.

\begin{algorithm}
\begin{algorithmic}[1]
\algrenewcommand\algorithmiccomment[2][\footnotesize]{{#1\hfill\(\triangleright\) #2}}
\caption{Chebyshev Solver}
\label{alg: ChebSolve}
\Procedure{\texttt{Cheb\_Solve}}{$\mathbf{p}, \boldsymbol{\ApproxError}, \mathcal{I}$}
\Comment{Polynomials $\p$, Errors $\boldsymbol{\ApproxError}$, and Interval $\mathcal{I}$}
\For{\texttt{Check} in \texttt{ExclusionChecks}}
\Comment{Run the exclusion checks}
    \If{$\texttt{Check}(\mathbf{p}, \boldsymbol{\ApproxError}$)}
    \label{lstline:exclusion}
    \Comment{Sec.~\ref{sec:exclusion}}
        \State \Return [ ], [ ]
        \Comment{Throw out interval}
   \EndIf
\EndFor

\State \texttt{changed} $\leftarrow$ True
\While{\texttt{changed}}
\Comment{Run the reduction method while it works}
    \State $\mathcal{\hat{I}}, \texttt{hitBaseCase} \leftarrow \texttt{ReductionMethod}(\mathbf{p}, \boldsymbol{\ApproxError})$
    \label{lstline:reduction}\Comment{Reduction: Sec.~\ref{sec:reduction}}
    \State \texttt{changed} $\leftarrow$ $\texttt{size}(\mathcal{\hat{I}}) \le 0.99 * \texttt{size}(\mathcal{I})$
    \Comment{Reduction worked?}
    \If{\texttt{hitBaseCase}}
    \label{lstline:basecase}\Comment{Sec.~\ref{sec:recursion}{\P}\hyperlink{par:base-case}{\emph{Base Case}}}
        \State $\texttt{root} \leftarrow \texttt{SolveFinalRoot}({\mathcal{I}})$  
        \label{lstline:FinalRoot}
        \Comment{Sec~\ref{sec:recursion}{\P}\hyperlink{par:FinalRoot}{\emph{Get Final Root}}}
        \State \Return [[\texttt{root}],[${\mathcal{I}}$]]  
        \Comment{Return root and interval}
    \ElsIf{\texttt{changed}}
        \label{lstline:reapprox}\Comment{Rescale: {Sec.~\ref{sec:transformation}}}
        \State $\mathbf{p}, \boldsymbol{\ApproxError} \leftarrow (\texttt{Transform}(\FuncApprox_1, \ApproxError_1, \mathcal{\hat{I}}),\ldots,\texttt{Transform}(\FuncApprox_n, \ApproxError_n, \mathcal{\hat{I}}))$
        \State $\mathcal{I} \leftarrow \mathcal{\hat{I}}$
    \EndIf
\EndWhile
\State \texttt{allIntervals} = \texttt{EmptyList}
\State \texttt{allRoots} = \texttt{EmptyList}
\For{$\mathcal{I}_k$ in \texttt{SplitInterval}($\mathcal{I}$)}
\label{lstline:subdivide}\Comment{Subdivide: Sec.~\ref{sec:subdivide}}
    \State $\mathbf{\hat{p}}, \boldsymbol{\hat{\ApproxError}} \leftarrow (\texttt{Transform}(\FuncApprox_1, \ApproxError_1, \mathcal{\hat{I}}),\ldots,\texttt{Transform}(\FuncApprox_n, \ApproxError_n, \mathcal{\hat{I}}))$
     \label{lstline:reapprox2}\Comment{{Sec.~\ref{sec:transformation}}}
    \State $\texttt{roots},\texttt{intervals} \leftarrow \texttt{Cheb\_Solve}(\hat{p}, \boldsymbol{\hat{\ApproxError}}, \mathcal{I}_k)$
    \label{lstline:recursion}\Comment{Recursive solve}
    \State $\texttt{allRoots} \leftarrow \texttt{Concat}(\texttt{allRoots}, \texttt{roots})$
    \State $\texttt{allIntervals} \leftarrow \texttt{Concat}(\texttt{allIntervals}, \texttt{intervals})$
\EndFor
\State $\texttt{CombineIntervals}(\texttt{allRoots}, \texttt{allIntervals})$
\label{lstline:combine}\Comment{Sec.~\ref{sec:recursion}{\P}\hyperlink{par:merge}{\emph{Merging}}}
\State \Return \texttt{allRoots}, \texttt{allIntervals}
\EndProcedure
\end{algorithmic}
\end{algorithm}

\section{Convergence} \label{section: convergence}

In this section we prove the convergence to a simple zero of the second reduction method (using all of the linear terms), as described in Section~\ref{sec:reduction} is R-quadratic.  For the  case that the approximation errors $\ApproxError_1,\dots, \ApproxError_n$ are positive, we give an upper bound for the size of the limiting interval in Theorem~\ref{thm: ND Linear Check Convergence Error Bound}, which allows us to identify when to stop reducing (the base case of Section~\ref{sec:recursion}\P\hyperlink{par:base-case}{Base Case}).  In this section we do not consider the possibility of additional subdivisions because those only occur if the reduction method fails to shrink, which we show will not occur if the initial interval is sufficiently closes to the simple zero.

We first recall the definition of R-quadratic convergence of a convergent sequence and extend it to a nested sequence of intervals. 
\begin{definition} \label{def:quadratic Convergence}
A sequence $(\x_k)_{k=0}^\infty$ converges \emph{quadratically} to $\x_*$ if there exists a $c>0$ such that the errors $e_k = \|\x_k - \x_*\|$ satisfy
\[
e_{k+1} \le c\, e_{k}^2 \quad \text{for all $k\in \N$.}
\]
  The sequence $(\x_k)_{k=0}^\infty$ converges \emph{R-quadratically} if there is a sequence $(r_k)_{k=0}^\infty$ converging quadratically to $0$ such that the errors $e_k = \|\x_k - \x_*\|$ satisfy 
\[
e_k \le r_k \quad \text{for all $k\in \N$}.
\]

For a decreasing sequence of intervals 
\begin{equation}\label{eq:sequence-of-intervals}
J_0 \supseteq J_1 \subseteq J_2 \supseteq \cdots
\end{equation}
in  $\R^n$, with intersection $\bigcap_{k=0}^\infty J_k = \{\x_*\},$
we take the error $e_k$ of the ``approximation'' $J_k$ to be 
$e_k = \max_{\x\in J_k} \|\x - \x_*\|$, which is bounded above by the diameter of $J_k$.  We say that the sequence \eqref{eq:sequence-of-intervals} converges  quadratically to $\x_*$ if $e_k$ converges quadratically to $0$; and we say the sequence of intervals converges \emph{R-quadratically} to $\x_*$ if the errors $e_k$ converge R-quadratically to $0$.
\end{definition}
Quadratic convergence is a very desirable property of a numerical algorithm, since the number of correct digits essentially doubles with each step.  The most famous example of an algorithm that converges quadratically is Newton's method, prompting claims like ``If it's fast, it must be Newton's method''\cite{Tapia}.

\begin{theorem}\label{thm:quadratic}

Let $J_0$ be an interval containing only one zero $\z$ of the Chebyshev proxy system $\FuncApprox_1,\dots, \FuncApprox_n$, and assume also that $\z$ is a simple zero of the proxy system. If $J_0$ is sufficiently small, the second reduction method, using all of the linear terms (see Section~\ref{sec:reduction}), produces a sequence $J_0 \supseteq J_1 \supseteq J_2 \supseteq \cdots$ that converges R-quadratically to the zero $\z$ if the Chebyshev approximation errors $\ApproxError_1,\dots, \ApproxError_n$ are all $0$.  That is, as a solver of Chebyshev polynomial systems, this reduction method converges R-quadratically.   

If the approximation errors $\ApproxError_i$ are not zero, then the reduction method shrinks not to a point, but to an interval. The convergence to the interval is similar to quadratic convergence, as detailed below.
Writing 
\[
J_k = [a_{k,1},b_{k,1}]\times \cdots \times [a_{k,n},b_{k,n}],
\]
let $\gamma_{k,m} = b_{k,m} - a_{k,m}$ be the width of $J_k$ in coordinate $m$, and let $\BaseCaseError_{i,m}$ be as defined in the \hyperlink{def:basecaseerror}{final bullet} of Definition~\ref{def:quadratic-notation} below. 
For each $m\in\{1,\dots, n\}$ the widths $\gamma_{i,m}$ satisfy the (almost-quadratic convergence) relation
\begin{equation}\label{eq:almost-quadratic-sides}
\gamma_{i+1, m} \le K Q_i^2  + \BaseCaseError_{i,m}\gamma_{i,m} 
\end{equation}
for some constant $K$ and a sequence $(Q_i)_{i=0}^\infty$ that converges quadratically to $0$.
Moreover the sequence $(\BaseCaseError_{i,m}\gamma_{i,m})_{i=0}^\infty$ is monotone decreasing.
\end{theorem}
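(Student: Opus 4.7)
The plan is to track how the Chebyshev coefficients of the rescaled polynomials $\FuncApprox_k^{(i)}$ on $[-1,1]^n$ (obtained from the original $f_k$ by pulling back from $J_i$) evolve as $J_i$ shrinks toward the simple zero $\z$. The reduction step produces an inner box $\hat J \subseteq [-1,1]^n$ of coordinate-$m$ width $W_m = \sum_k |(A^{(i)})^{-1}_{m,k}|\bigl(\ApproxError_k + \texttt{CoeffBound}(r_k^{(i)})\bigr)$, and pulling back to original coordinates yields $\gamma_{i+1,m} = (\gamma_{i,m}/2)\,W_m$. I would separate the two contributions to $W_m$: the $\texttt{CoeffBound}$ part will supply the quadratic term $K Q_i^2$ with $Q_i := \|\gamma_i\|$, while the $\ApproxError$ part will supply the floor $\BaseCaseError_{i,m}\gamma_{i,m}$.

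The main technical input is a coefficient-scaling lemma: for a real-analytic function rescaled from a box of widths $\gamma_{i,1},\dots,\gamma_{i,n}$ to $[-1,1]^n$, the Chebyshev coefficient of multi-index $\k$ has magnitude $O\bigl(\prod_j \gamma_{i,j}^{k_j}\bigr)$. This follows from Taylor expansion at the center $c_i$ of $J_i$ combined with standard bounds (via the integral/cosine formula) for Chebyshev coefficients. Applied here, it gives the leading-order estimates $A^{(i)}_{k,j} = (\gamma_{i,j}/2)(\partial_j f_k)(c_i) + O(\|\gamma_i\|^3)$ and $\texttt{CoeffBound}(r_k^{(i)}) = O(\|\gamma_i\|^2)$. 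Since $\z$ is a simple zero, $Df(\z)$ is invertible, so for $J_0$ sufficiently small the normalized matrix $A^{(i)}\,\mathrm{diag}(\gamma_{i,j}/2)^{-1}$ stays close to $Df(c_i)$ and is uniformly invertible throughout the iteration; consequently $|(A^{(i)})^{-1}_{m,k}| \le 2C/\gamma_{i,m}$ for a uniform constant $C$.

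Plugging these estimates into $\gamma_{i+1,m} = (\gamma_{i,m}/2)\,W_m$, the $\texttt{CoeffBound}$ contribution is $(\gamma_{i,m}/2)\cdot(2C/\gamma_{i,m})\cdot O(\|\gamma_i\|^2) = O(\|\gamma_i\|^2)$, providing the quadratic term $K Q_i^2$. The $\ApproxError$ contribution $(\gamma_{i,m}/2)\sum_k |(A^{(i)})^{-1}_{m,k}|\ApproxError_k$ is by definition $\BaseCaseError_{i,m}\gamma_{i,m}$, and its monotonicity in $i$ follows because $|(A^{(i)})^{-1}_{m,k}|\gamma_{i,m}$ is dominated by $2C$ times entries of $Df(\z)^{-1}$ up to corrections that themselves shrink with $\|\gamma_i\|$, so the upper bound on $\BaseCaseError_{i,m}\gamma_{i,m}$ is nonincreasing. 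For R-quadratic convergence when all $\ApproxError_k = 0$, the recurrence reduces to $\gamma_{i+1,m} \le K\|\gamma_i\|^2$, yielding $Q_{i+1} \le K\sqrt{n}\,Q_i^2$; since the diameter of $J_i$ is at most $\sqrt{n}\,Q_i$, Definition~\ref{def:quadratic Convergence} gives R-quadratic convergence to $\z$.

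The main obstacle is keeping every estimate uniform across iterations: each Chebyshev transformation step can in principle distort the coefficient bounds, so one must check that the scaling lemma continues to hold after many rounds of rescaling (a point dealt with by the stability analysis in Section~\ref{section: stability}). A secondary subtlety is that the ``sufficiently small $J_0$'' hypothesis must guarantee not only the invertibility of $A^{(0)}$ but also that all iterates $J_i$ remain in a region where $A^{(i)}$ is uniformly invertible; this follows from a contraction-type argument once the first iterate sits inside such a region.
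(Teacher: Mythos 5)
Your overall strategy mirrors the paper's: relate Chebyshev coefficients to derivatives of the rescaled proxy polynomials so that degree-$\geq 2$ coefficients scale quadratically in the box widths while the linear coefficient matrix tracks the Jacobian; use invertibility of the Jacobian at the simple zero to control $A^{-1}$; and assemble these into the recurrence $\gamma_{i+1,m} \le KQ_i^2 + \BaseCaseError_{i,m}\gamma_{i,m}$, which collapses to a pure quadratic recurrence when every $\ApproxError_j=0$. The one place your route genuinely differs is the proof of the coefficient-scaling input. You propose Taylor expansion at the box center combined with the cosine integral formula for Chebyshev coefficients, which would give the required $O(\prod_j\gamma_{i,j}^{k_j})$ bound but at the cost of extra tail estimates for the analytic remainder. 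The paper instead proves a sharper mean-value-type identity via orthogonality of the Chebyshev basis (Theorems~\ref{thm:Bound On All Degree Coeffs} and~\ref{thm:Bound On All Degree Coeffs ND}): each coefficient $\coeff_\k$ equals $\left(\prod_j\frac{1}{k_j!B_{k_j}}\right)\partial^\k p(\c)$ exactly for some $\c$ in the box, so the scaling under rescaling is just the chain rule (Equations~\eqref{eq:MinDetDeriv}--\eqref{eq:MaxFuncDeriv}), with no error terms. Both routes support the quadratic recurrence; the paper's is cleaner and makes the Cramer's-rule bound on $A^{-1}$ entries immediate.

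The genuine gap in your write-up is the monotonicity of $(\BaseCaseError_{i,m}\gamma_{i,m})_i$. You argue that $|(A^{(i)})^{-1}_{m,k}|\gamma_{i,m}$ is ``dominated by $2C$ times entries of $Df(\z)^{-1}$ up to corrections that shrink,'' and conclude ``the upper bound is nonincreasing.'' But being bounded above by a decreasing quantity does not make a sequence monotone, and the theorem asserts monotonicity of the sequence itself. Also note that $\BaseCaseError_{i,m}$ in the theorem is not $\tfrac12\sum_k|(A^{(i)})^{-1}_{m,k}|\ApproxError_k$ as in your sketch; it is the definite quantity $(\dimension-1)!\frac{\prod_{\ell\ne m}\MaxFirstDeriv_{i,\ell}}{\MinDetDeriv_i}\sum_j\ApproxError_j$ from Definition~\ref{def:quadratic-notation}, which is the cofactor-expansion bound on that sum. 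With that definition the monotonicity is a direct consequence of the rescaling identities: the chain rule gives $\MaxFirstDeriv_{i,\ell}\le\RatioReduc_{i,\ell}\MaxFirstDeriv_{i-1,\ell}$ and $\MinDetDeriv_i\ge\left(\prod_\ell\RatioReduc_{i,\ell}\right)\MinDetDeriv_{i-1}$, which combine to $\BaseCaseError_{i,m}\RatioReduc_{i,m}\le\BaseCaseError_{i-1,m}$, i.e.\ $\BaseCaseError_{i,m}\gamma_{i,m}\le\BaseCaseError_{i-1,m}\gamma_{i-1,m}$. Your proposal, as written, does not reach this conclusion; you should either adopt the paper's definition of $\BaseCaseError_{i,m}$ and its scaling argument, or prove directly that your alternative quantity is itself monotone, which is not obvious.
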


The idea behind this proof is somewhat similar to that of the convergence of Newton's method. We first show that the coefficients of a Chebyshev polynomial are related to the derivatives of the polynomial, and then use the fact that, as we zoom in on a zero, the higher-order derivatives shrink faster than the first-order ones. Similar to Newton's method, this check requires the Jacobian to be invertible in some neighborhood of $\z$, and the higher-order derivatives to be sufficiently small relative to the first-order derivatives.

The rest of this section is dedicated to proving Theorem~\ref{thm:quadratic}.  First, Section~\ref{sec:cheb-derivatives} establishes some relations between Chebyshev coefficients and their derivatives, and then in Section~\ref{sec:R-quadtratic-proof} we use those relationships to prove Theorem~\ref{thm:quadratic}.

\subsection{Relationship between Chebyshev coefficients and derivatives}\label{sec:cheb-derivatives}

We begin by proving a relation between Chebyshev coefficients and their derivatives. This proof uses the fact that Chebyshev polynomials are orthogonal with respect to the weight function $(1-x^2)^{-\frac12}$. While we only need the result for Chebyshev polynomials, the proof holds for a wide range of orthogonal polynomials.

\begin{theorem} \label{thm:Bound On All Degree Coeffs}
Let $\ChebPoly(x) = \sum_{k=0}^d \coeff_k P_k(x)$, where the degree of each $P_k$ is $k$ and $\{P_i\}$ is a basis of polynomials on $[-1,1]$ that are orthogonal (that is, $\int_{-1}^1 P_k(x) P_m(x) W(x)dx = 0$ when $k \ne m$) with respect to some nonnegative weight function $W(x)$ that is zero on at most a set of measure zero.  Also let $B_k$ be the coefficient of $x^k$ in $P_k$. For each $k\in \{0,\dots, d\}$, there exists $c \in [-1,1]$ such that $\coeff_k  = \frac{1}{k! B_k} \ChebPoly^{(k)}(c)$.
\end{theorem}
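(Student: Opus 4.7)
My plan is to apply the mean value theorem for integrals to $\ChebPoly^{(k)}$ against a cleverly chosen nonnegative weight $\mu$ on $[-1,1]$. Specifically, I aim to construct $\mu \geq 0$, not identically zero, satisfying
\[
\int_{-1}^1 \ChebPoly^{(k)}(x)\,\mu(x)\,dx \;=\; k!\,B_k\,\coeff_k \int_{-1}^1 \mu(x)\,dx.
\]
The mean value theorem for integrals then yields some $c\in[-1,1]$ with $\ChebPoly^{(k)}(c) = k!\,B_k\,\coeff_k$, which rearranges to the desired formula. (The case $k=0$ is immediate by taking $\mu = W$: orthogonality of $P_j$ with $P_0$ for $j\geq 1$ already forces $\int \ChebPoly\,W\,dx = \coeff_0\, B_0\int W\,dx$, so the mean value theorem finishes.)

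For $k\geq 1$ I would define
\[
\mu(x) \;=\; \frac{\sigma}{(k-1)!}\int_{-1}^x (x-s)^{k-1}\,W(s)\,P_k(s)\,ds,
\]
where $\sigma\in\{\pm 1\}$ is a sign to be chosen. From the iterated-integral form, $\mu^{(i)}(-1)=0$ for $i=0,\ldots,k-1$ and $\mu^{(k)}(x) = \sigma\,W(x)\,P_k(x)$. The orthogonality of $P_k$ to every polynomial of degree less than $k$ also forces $\mu^{(i)}(1) = 0$ for $i=0,\ldots,k-1$, since $\mu^{(i)}(1)$ is a multiple of $\int_{-1}^1 (1-s)^{k-1-i}\,W(s)\,P_k(s)\,ds$ and $(1-s)^{k-1-i}$ has degree strictly less than $k$. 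These boundary conditions let me integrate by parts $k$ times with no boundary contributions, giving $\int P_j^{(k)} \mu\,dx = (-1)^k\sigma\int P_j P_k W\,dx$, which vanishes for $j>k$ by orthogonality. Combined with $P_k^{(k)} = k!\,B_k$ (a constant) and $P_j^{(k)} = 0$ for $j<k$, expanding $\ChebPoly^{(k)} = \sum_{j\geq k}\coeff_j P_j^{(k)}$ immediately produces the required moment identity.

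The main obstacle is showing that, for the appropriate choice of $\sigma$, $\mu$ is nonnegative on $[-1,1]$, and for this I would run a zero-counting argument using the generalized Rolle theorem. The vanishing boundary data forces $\mu$ to have a zero of order $k$ at each endpoint, so if $\mu$ has $m$ interior zeros counted with multiplicity then $\mu$ has $2k+m$ zeros in $[-1,1]$. Iterating Rolle $k$ times gives at least $k+m$ zeros of $\mu^{(k)}$ in the interior. On the other hand, since $W>0$ on $(-1,1)$ and $P_k$ has exactly $k$ simple roots there, $\mu^{(k)} = \sigma W P_k$ has exactly $k$ interior zeros, forcing $m=0$. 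Consequently $\mu$ has constant sign on $[-1,1]$, and $\sigma$ can be chosen so that $\mu\geq 0$; since $\mu^{(k)}\not\equiv 0$ we also have $\mu\not\equiv 0$, hence $\int\mu > 0$. The mean value theorem for integrals now delivers the desired $c\in[-1,1]$, completing the proof.
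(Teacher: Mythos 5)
Your proof takes a genuinely different and more constructive route than the paper's. The paper splits off $g = \sum_{i>k}\coeff_i P_i$ and argues by contradiction: if $g^{(k)}$ never vanishes (WLOG positive), then $g$ has at most $k$ roots, a degree-$\le k$ polynomial $h$ with the same roots and sign as $g$ gives $\int g\,h\,W > 0$, and this contradicts $\int g\,h\,W = 0$ from orthogonality. You instead construct an explicit nonnegative test function $\mu$ (the $k$-fold iterated integral of $W P_k$ with a sign $\sigma$), verify the moment identity $\int \ChebPoly^{(k)}\mu = k!\,B_k\,\coeff_k\int\mu$ by $k$-fold integration by parts with boundary terms killed by orthogonality, and conclude directly from the mean value theorem for integrals. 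That part of your argument is correct and is a nice, direct derivation.

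The gap is in the sign-constancy of $\mu$. Your Rolle count needs $\mu^{(k)} = \sigma W P_k$ to have exactly $k$ zeros in $(-1,1)$, and for that you write ``since $W>0$ on $(-1,1)$.'' But the hypothesis is only that $W\ge 0$ and vanishes on a set of measure zero; if $W$ vanishes at even one interior point not among the roots of $P_k$, then $\mu^{(k)}$ has at least $k+1$ zeros and the count no longer forces $m=0$. The robust version of your idea would count sign changes rather than zeros with multiplicity---$W P_k$ has exactly $k$ sign changes under the actual hypothesis---but that requires a variation-diminishing or Budan--Fourier style statement, not the zeros-with-multiplicity Rolle argument you invoke. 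A secondary issue is regularity: the final application of Rolle needs $\mu^{(k-1)}(x) = \sigma\int_{-1}^x W P_k\,ds$ to be differentiable on all of $(-1,1)$, which does not follow for a merely integrable $W$; $\mu$ is a priori only $C^{k-1}$. For the Chebyshev weight $W(x) = (1-x^2)^{-1/2}$, which is smooth and strictly positive on $(-1,1)$ and is the case actually used in the rest of the paper, both issues vanish and your argument is sound; but as written it does not cover the theorem at the generality stated, whereas the paper's shorter contradiction argument does, since it never needs to differentiate through $W$ or count zeros of $W P_k$.
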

\begin{proof}   
The $k$th derivative of $\coeff_k P_k(x)$ is $\coeff_k k! B_k$. Thus we need only show the $k$th derivative of the sum of terms of degree $k+1$ and higher is zero at some point; that is, we must show that for any $g(x) = \sum_{i=k+1}^n \coeff_i P_i(x)$ there exists $c \in [-1,1]$ such that  $g^{(k)}(c) = 0$. 

Assume for the sake of contradiction that $g^{(k)}(c) \ne 0$ for all $c \in [-1,1]$. Without loss of generality let $g^{(k)}(x) > 0$ on $[-1,1]$. Any polynomial $h(x)$ of degree at most $k$ can be written in the $\{P_i\}$ basis as $h(x) = \sum_{i=0}^k b_i P_i(x)$, and orthogonality of the $P_i$ guarantees that $\int_{-1}^1 g(x) h(x) W(x)dx = 0$. 

Because $g^{(k)}(x) \ne 0$ for all $x \in [-1,1]$, the polynomial $g$ can have at most $k$ roots on the interval. So let $h(x)$ be a polynomial of degree at most $k$ that has the same roots as $g$ and the same sign as $g$ on each part of the interval.  This implies that $g(x) h(x) \ge 0$ for all $x \in [-1,1]$.  But $W(x) \ge 0$ and $W,g,h$ are nonzero almost everywhere, which implies that $\int_{-1}^1 g(x) h(x) W(x)dx > 0$, a contradiction.
\end{proof}

\begin{corollary} \label{crly:Bound On All Degree Coeffs Cheb}
Let $\ChebPoly(x) = \sum_{k=0}^d \coeff_k T_k(x)$. There exist  $c_0,\dots, c_d \in [-1,1]$ such that  $a_0 = \ChebPoly(c_0)$, $a_1 = \ChebPoly'(c_1)$, and  $\coeff_k  = \frac{1}{k! 2^{k-1}} \ChebPoly^{(k)}(c_k)$ for all $k \ge 2$.
\end{corollary}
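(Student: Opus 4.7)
The plan is to derive the corollary as a direct specialization of Theorem~\ref{thm:Bound On All Degree Coeffs} to the Chebyshev basis. The theorem's hypotheses require $\{P_k\}$ to be a family of polynomials with $\deg P_k = k$, orthogonal on $[-1,1]$ with respect to a nonnegative weight function that vanishes only on a set of measure zero, and the conclusion involves $B_k$, the leading coefficient of $P_k$. So I just need to verify the hypotheses for $P_k = T_k$, identify the leading coefficients $B_k$, and substitute into the formula $\coeff_k = \frac{1}{k!\,B_k}\ChebPoly^{(k)}(c_k)$.

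First I would verify the orthogonality of the $T_k$ with respect to the weight $W(x) = (1-x^2)^{-1/2}$. The cleanest way is to use the defining identity $T_k(\cos t) = \cos(kt)$ together with the substitution $x = \cos t$, which sends $dx$ to $-\sin(t)\,dt$ and converts $\int_{-1}^1 T_k(x)T_m(x)(1-x^2)^{-1/2}\,dx$ into $\int_0^\pi \cos(kt)\cos(mt)\,dt$. The latter vanishes for $k\neq m$ by the standard orthogonality of cosines. The weight $W(x) = (1-x^2)^{-1/2}$ is strictly positive on $(-1,1)$, and is only undefined at $x=\pm 1$, a set of measure zero, so the hypothesis of Theorem~\ref{thm:Bound On All Degree Coeffs} is satisfied.

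Next I would compute the leading coefficients $B_k$ of $T_k$ from the three-term recursion $T_{k+1}(x) = 2xT_k(x) - T_{k-1}(x)$. The base cases give $B_0 = 1$ (since $T_0 = 1$) and $B_1 = 1$ (since $T_1(x) = x$). A straightforward induction on the recursion shows $B_{k+1} = 2B_k$ for $k \ge 1$, so $B_k = 2^{k-1}$ for all $k \ge 1$. Plugging these into the conclusion of Theorem~\ref{thm:Bound On All Degree Coeffs}: for $k=0$ we get $\coeff_0 = \frac{1}{0!\cdot 1}\ChebPoly(c_0) = \ChebPoly(c_0)$; for $k = 1$ we get $\coeff_1 = \frac{1}{1!\cdot 1}\ChebPoly'(c_1) = \ChebPoly'(c_1)$; and for $k \ge 2$ we get $\coeff_k = \frac{1}{k!\,2^{k-1}}\ChebPoly^{(k)}(c_k)$, which is exactly the claimed formula.

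There is no real obstacle in this argument; the corollary is a transparent specialization of the previous theorem. The only point worth mentioning is that the split into three cases ($k=0$, $k=1$, and $k \ge 2$) in the statement is purely cosmetic, because $B_1 = 1 = 2^{1-1}$ means the uniform formula $\coeff_k = \frac{1}{k!\,2^{k-1}}\ChebPoly^{(k)}(c_k)$ actually holds for every $k \ge 1$; separating $a_1$ just makes the small-$k$ cases more readable.
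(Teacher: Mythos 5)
Your argument is exactly the intended specialization: the paper states this corollary without a written proof, expecting the reader to do precisely what you did — verify the orthogonality of $\{T_k\}$ with respect to $(1-x^2)^{-1/2}$ and compute $B_0 = 1$, $B_k = 2^{k-1}$ for $k \ge 1$ from the three-term recursion, then substitute into Theorem~\ref{thm:Bound On All Degree Coeffs}. Your observation that the $k=1$ case is not genuinely separate (since $B_1 = 2^0 = 1$) is correct and a nice clarifying point.
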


We can extend this result to $n$-dimensional systems by induction.
\begin{theorem} \label{thm:Bound On All Degree Coeffs ND}
Let $\ChebPoly({\x})$ be a polynomial of the form $\ChebPoly({\x}) = \sum \coeff_\k P_{k_1}(x_1) \dots P_{k_n}(x_n)$ for $\{P_k\}$ an orthogonal basis of polynomials with the same conditions as described in Theorem~\ref{thm:Bound On All Degree Coeffs}, and for each $k\in \N$ let $B_k$ denote  the coefficient of the monomial $x^k$ in each $P_k$.  For each  multi-index $\k = (k_1,\dots, k_n) \in \N^n$, there exists $\c\in [-1, 1]^n$ such that $ \coeff_\k  = \left(\prod_{i=1}^n \frac{1}{k_i! B_{k_i}}\right) \frac{\partial^{k_1}}{\partial x_1^{k_1}} \dots \frac{\partial^{k_n}}{\partial x_n^{k_n}} \ChebPoly(\mathbf{c})$.
\end{theorem}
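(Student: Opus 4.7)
The plan is to proceed by induction on the dimension $n$, with base case $n=1$ being exactly Theorem~\ref{thm:Bound On All Degree Coeffs} applied to a single-variable polynomial in the basis $\{P_k\}$.

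For the inductive step, fix $\k = (k_1,\dots,k_n)$ and introduce the shorthand $\tilde\x = (x_1,\dots,x_{n-1})$, $\tilde\k = (k_1,\dots,k_{n-1})$, and $\tilde D = \prod_{i=1}^{n-1} \frac{\partial^{k_i}}{\partial x_i^{k_i}}$. View $\ChebPoly(\x)$ as a polynomial in $x_n$ whose coefficients are polynomials in $\tilde\x$:
\[
\ChebPoly(\x) = \sum_{j\ge 0} S_j(\tilde\x)\,P_j(x_n),
\qquad
S_j(\tilde\x) = \sum_{k_1',\dots,k_{n-1}'}\coeff_{(k_1',\dots,k_{n-1}',j)}\prod_{i<n} P_{k_i'}(x_i).
\]
Each $S_j$ is a polynomial in $n-1$ variables expressed in the basis $\{P\}$, and by construction the $\tilde\k$-coefficient of $S_{k_n}$ is exactly $\coeff_\k$.

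The induction then proceeds in two moves. First, apply the inductive hypothesis to $S_{k_n}$ and the multi-index $\tilde\k$ to obtain $\tilde\c\in[-1,1]^{n-1}$ with $\coeff_\k = \bigl(\prod_{i<n}\tfrac{1}{k_i!\,B_{k_i}}\bigr)\,\tilde D\,S_{k_n}(\tilde\c)$. Second, note that $\tilde D$ differentiates only in $x_1,\dots,x_{n-1}$, so it commutes with the $P_j(x_n)$-expansion and produces the univariate polynomial $q(x_n) := \tilde D\,\ChebPoly(\tilde\c,x_n) = \sum_j \tilde D\,S_j(\tilde\c)\,P_j(x_n)$ in $x_n$ whose $k_n$-th coefficient (in the basis $\{P\}$) is $\tilde D\,S_{k_n}(\tilde\c)$. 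Applying Theorem~\ref{thm:Bound On All Degree Coeffs} to $q$ yields $c_n\in[-1,1]$ with
\[
\tilde D\,S_{k_n}(\tilde\c) = \frac{1}{k_n!\,B_{k_n}}\,q^{(k_n)}(c_n) = \frac{1}{k_n!\,B_{k_n}}\left(\prod_{i=1}^{n}\frac{\partial^{k_i}}{\partial x_i^{k_i}}\right)\ChebPoly(\tilde\c, c_n).
\]
Setting $\c = (\tilde\c, c_n)\in[-1,1]^n$ and substituting into the inductive formula for $\coeff_\k$ yields the required identity.

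I do not anticipate any deep obstacle, because the substantive content—the orthogonality argument showing that the higher-degree contributions to the derivative vanish somewhere on $[-1,1]$—already lives inside Theorem~\ref{thm:Bound On All Degree Coeffs} and only needs to be invoked twice. The one genuinely delicate point is the order of operations: one must choose $\tilde\c$ via the inductive hypothesis \emph{before} invoking the 1D theorem in the last variable, so that the 1D theorem is applied to the fully specified polynomial $\tilde D\,\ChebPoly(\tilde\c,\cdot)$ rather than to a parameter-dependent family (in which case the point $c_n$ would depend on $\tilde\x$ and the two applications would no longer compose into a single evaluation of $D_\k \ChebPoly$ at a point of $[-1,1]^n$).
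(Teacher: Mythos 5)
Your proof is correct, and it takes a genuinely different route from the paper's.

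The paper first reduces the theorem to a vanishing statement: it suffices to show that for the ``tail'' $g = \sum_{\boldsymbol{\nu} > \k} a_{\boldsymbol\nu} P_{\boldsymbol\nu}$, the full mixed derivative $\partial^{k_1}_{x_1}\cdots\partial^{k_n}_{x_n} g$ vanishes at some $\c\in[-1,1]^n$. It then splits $g$ additively by the first index into $g_1$ (the terms with $\nu_1 = k_1$) and $g_2$ (those with $\nu_1 > k_1$): for $g_1$, differentiating $k_1$ times in $x_1$ leaves an $(n-1)$-variable polynomial to which the inductive hypothesis applies, producing $c_2,\dots,c_n$; for $g_2$, the one-dimensional orthogonality argument in $x_1$ at those chosen $c_2,\dots,c_n$ produces $c_1$. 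You instead prove the coefficient formula directly without passing to the tail, by expanding $\ChebPoly = \sum_j S_j(\tilde\x) P_j(x_n)$ and applying the inductive hypothesis to the single coefficient polynomial $S_{k_n}$, then applying the one-dimensional theorem as a black box to the univariate polynomial $q(x_n) = \tilde D\,\ChebPoly(\tilde\c, x_n)$. Your argument buys you a cleaner structure: no reduction to a vanishing statement, no additive splitting, and no need to re-run the orthogonality argument inside the inductive step (it is invoked wholesale via Theorem~\ref{thm:Bound On All Degree Coeffs}). The paper's argument is slightly more self-contained at the step where it ``re-derives'' the $c_1$ from orthogonality, but the two are the same amount of work. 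Your remark about the order of operations (fixing $\tilde\c$ before invoking the one-dimensional theorem in $x_n$, so the latter is applied to a concrete univariate polynomial rather than a parameter-dependent family) is precisely the point that makes the composition into a single evaluation point $\c$ legitimate, and you handle it correctly; the paper addresses the analogous subtlety by choosing $c_2,\dots,c_n$ first and then $c_1$.
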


\begin{proof}
We denote any monomial $P_{\nu_1} \dots P_{\nu_n}$ as $P_{\boldsymbol{\nu}}$. We write $\boldsymbol{\nu} \not\ge \k$ if $\nu_i < k_i$ for any $i$; we write $\boldsymbol{\nu} = \k$ if $\nu_i = k_i$ for all $i$; and we write $\boldsymbol{\nu} > \k$ otherwise. If $\boldsymbol{\nu} \not\ge \k$, then $\frac{\partial^{k_1}}{\partial x_1^{k_1}} \dots \frac{\partial^{k_n}}{\partial x_n^{k_n}} P_{\boldsymbol{\nu}} = 0$, and $\frac{\partial^{k_1}}{\partial x_1^{k_1}} \dots \frac{\partial^{k_n}}{\partial x_n^{k_n}} P_\k = \prod_{i=1}^n k_i! B_{k_i}.$ So we need only show that if $g({\x}) = \sum a_{\boldsymbol{\nu}} P_{\boldsymbol{\nu}}$, where $\boldsymbol{\nu} > \k$ for all $P$ in the sum, then there exists some $\mathbf{c} \in [-1,1]^n$ such that  $\frac{\partial^{k_1}}{\partial x_1^{k_1}} \dots \frac{\partial^{k_n}}{\partial x_n^{k_n}} g(\mathbf{c}) = 0$. We prove this by induction on the number $n$ of dimensions. The base case for dimension $n=1$ is given in Theorem~\ref{thm:Bound On All Degree Coeffs}.

In dimension $n>1$, split $g({\x}) = g_1({\x}) + g_2({\x})$ where $g_1({\x})$ contains the $P_{\boldsymbol{\nu}}$ where $\nu_1 = k_1$, and $g_2({\x})$ contains everything else. The derivative $\frac{\partial^{k_1}}{\partial x_1^{k_1}} g_1$ is constant with respect to $x_1$, and so is a polynomial in $n-1$ variables that satisfies the criterion of the inductive step. Thus there exist $c_2, \dots, c_n \in [-1,1]$ such that $\frac{\partial^{k_1}}{\partial x_1^{k_1}} \frac{\partial^{k_2}}{\partial x_2^{k_2}} \dots \frac{\partial^{k_n}}{\partial x_n^{k_n}} g_1(x,c_2,\dots,c_n) = 0$ for all  $x \in [-1,1]$.

Given these $c_2, \dots, c_n$, if $h$ is a one-dimensional polynomial of degree at most $k_1$, then  $\int_a^b g_2(x,c_2, \dots, c_n) h(x) W(x)dx = 0$ by orthogonality. By the same argument as in Theorem~\ref{thm:Bound On All Degree Coeffs}, there exists $c_1 \in [-1,1]$ such that $\frac{\partial^{k_1}}{\partial x_1^{k_1}} g_2(c_1, \dots, c_n) = 0$.
Thus at  $\c = (c_1, c_2, \dots, c_n)$, we have $\frac{\partial^{k_1}}{\partial x_1^{k_1}} \dots \frac{\partial^{k_n}}{\partial x_n^{k_n}} g(\mathbf{c}) = 0$, as required.
\end{proof}

\begin{corollary} \label{crly:Bound On All Degree Coeffs Cheb ND}
Let $\ChebPoly({\x}) = \sum \coeff_\k  T_{k_1}(x_1) \dots T_{k_n}(x_n)$. Let $\mathbf{e}_i = (0,\dots, 1, \dots, 0)$ be the index vector with $1$ in the $i$th position and zeros elsewhere. There exist points  $\mathbf{q},\mathbf{r}_i, \mathbf{s_k} \in [-1,1]^n$ such that the 
coefficients of $\ChebPoly$ are related to partial derivatives evaluated at these points as follows:   $\coeff_{\0} = \ChebPoly(\mathbf{q})$,  $\coeff_{\mathbf{e}_i} = \frac{\partial \ChebPoly}{\partial x_i}(\mathbf{r}_i)$, and $\coeff_\k  = \left(\prod_{i=1}^n \frac{1}{k_i! B_{k_i}}\right) \frac{\partial^{k_1}}{\partial x_1^{k_1}} \dots \frac{\partial^{k_n}}{\partial x_n^{k_n}} \ChebPoly(\mathbf{s_k})$, where, since these are Chebyshev polynomials, $B_{k_i} = 2^{k_i -1}$ unless $k_i = 0$, in which case $B_0 = 1$.
\end{corollary}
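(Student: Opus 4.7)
The plan is to derive this corollary as a direct specialization of Theorem~\ref{thm:Bound On All Degree Coeffs ND} to the Chebyshev basis $\{T_k\}$, followed by simplification of the prefactor in the two special cases $\k = \0$ and $\k = \e_i$. So the work is essentially verifying hypotheses and computing leading coefficients; I do not anticipate any genuine obstacle.

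First I would check that $\{T_k\}$ satisfies the hypotheses of Theorem~\ref{thm:Bound On All Degree Coeffs ND}: each $T_k$ has degree exactly $k$, and the $T_k$ are orthogonal on $[-1,1]$ with respect to the nonnegative weight $W(x) = (1-x^2)^{-1/2}$, which is strictly positive on $(-1,1)$ and therefore zero only on the measure-zero set $\{\pm 1\}$. Orthogonality follows from the standard change of variables $x = \cos\theta$, under which $T_k(\cos\theta) = \cos(k\theta)$ and $W(x)\,dx = d\theta$, reducing the inner product to the familiar orthogonality of $\cos(k\theta)$ and $\cos(m\theta)$ on $[0,\pi]$.

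Next, I would identify the leading coefficient $B_k$ of $T_k$. The recursion $T_{k+1}(x) = 2xT_k(x) - T_{k-1}(x)$ with $T_0(x) = 1$ and $T_1(x) = x$ yields, by a one-line induction on $k$, that the leading coefficient is $B_0 = 1$, $B_1 = 1$, and $B_k = 2^{k-1}$ for $k \geq 1$ (the cases $k=0,1$ agreeing with the stated $B_{k_i} = 2^{k_i - 1}$ only when $k_i \geq 1$, which is why the $k_i = 0$ case is called out separately).

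With the hypotheses verified and $B_k$ known, Theorem~\ref{thm:Bound On All Degree Coeffs ND} immediately yields the general statement $\coeff_\k = \left(\prod_{i=1}^n \tfrac{1}{k_i! B_{k_i}}\right) \tfrac{\partial^{k_1}}{\partial x_1^{k_1}} \cdots \tfrac{\partial^{k_n}}{\partial x_n^{k_n}} \ChebPoly(\s_\k)$ for some $\s_\k \in [-1,1]^n$. The two special cases drop out by substitution: for $\k = \0$, every $k_i = 0$ gives prefactor $\prod \tfrac{1}{0! \cdot 1} = 1$ and the differential operator is the identity, yielding $\coeff_\0 = \ChebPoly(\q)$; for $\k = \e_i$, the $j \neq i$ coordinates contribute factors of $1$ while the $i$th contributes $\tfrac{1}{1! \cdot B_1} = \tfrac{1}{1 \cdot 1} = 1$, yielding $\coeff_{\e_i} = \tfrac{\partial \ChebPoly}{\partial x_i}(\r_i)$. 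That completes the proof; the only subtlety worth flagging in the write-up is the $k_i = 0$ versus $k_i \geq 1$ distinction in $B_{k_i}$, since the compact formula $B_{k_i} = 2^{k_i-1}$ fails at $k_i = 0$.
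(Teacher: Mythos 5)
Your proposal is correct and follows exactly the route the paper intends: the corollary is a direct specialization of Theorem~\ref{thm:Bound On All Degree Coeffs ND} to the Chebyshev basis, for which the paper supplies no separate proof because it is immediate once one checks orthogonality under $W(x)=(1-x^2)^{-1/2}$ and computes the leading coefficients $B_0=1$, $B_k=2^{k-1}$ for $k\ge 1$. One minor note on phrasing: you write that the cases $k=0,1$ agree with $2^{k-1}$ only when $k\ge 1$, but in fact only $k=0$ deviates, since $B_1=1=2^{1-1}$ already fits the compact formula.
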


\begin{definition}
    For each Chebyshev polynomial approximation $\FuncApprox_j$ of the system $\{\Func_i\}_{i=1}^n$ of functions we want to find zeros of, and for each multiindex $\k\in \N^n$ let 
    \[ \MaxFuncDeriv_{j,\k} = \underset{{\x} \in [-1,1]^n}{\max} \prod_{i=1}^n  \left( \frac{1}{k_i! B_{k_i}}\right) \frac{\partial^{k_1}}{\partial x_1^{k_1}} \dots \frac{\partial^{k_n}}{\partial x_n^{k_n}} \FuncApprox_j({\x}) .
    \]
\end{definition}
Using this notation, we now have a bound on the $\k$th coefficient of $\FuncApprox_j$ 
\[
\abs{\coeff_{\k}} \leq \MaxFuncDeriv_{j,\k}.
\]

\subsection{Quadratic convergence}\label{sec:R-quadtratic-proof}

With these results in hand, we are now prepared to prove Theorem~\ref{thm:quadratic}
To start, we define some notation, which we use in continuation with that of Definition~\ref{def: Notation Checks}.

\begin{definition}\label{def:quadratic-notation}
\hfill
    \begin{itemize}
        \item $\RatioReduc$ : Let the width of the interval $J_i$ (without the consecutive linear rescalings back to the standard interval $I$ at each step) at the start of iteration $i$  in coordinate $m$ be $\gamma_{i,m}$, and define $\RatioReduc_{i,m} = \frac{\gamma_{i,m}}{\gamma_{i-1,m}}$, that is, the factor by which the interval shrinks in coordinate $m$ when moving from $J_{i-1}$ to $J_{i}$ .
        
        \item $\TotalReduc$ : Let $\TotalReduc_{i,m} = \prod_{\ell \le i}\RatioReduc_{\ell,m} = \frac{\gamma_{i,m}}{\gamma_{0,m}}$. At iteration $i$ this is the total shrinkage in coordinate $m$.  Let $\TotalReduc_i$ = $\max_m \TotalReduc_{i,m}$.
        Note that the total error $e_i = \max_{\x\in J_i} \|\x - \z\|$ in  interval $J_i$ satisfies \begin{equation}\label{eq:interval-bounded-by-sigma}
        e_i \le \sqrt{\sum_{m=1}^n \gamma_{i,m}^2} \le \sqrt{n} \max_{m} \gamma_{i,m} = c \sqrt{n} \TotalReduc_{i},
        \end{equation}
        for $c = \frac{1}{\min_{m} \gamma_{0,m}}.$
        Therefore, to prove R-quadratic convergence of the intervals, it suffices to show that $(\TotalReduc_i)_{i=1}^\infty$ converges quadratically to zero.
        
        \item $\MinDetDeriv$:  Let $\MinDetDeriv_i = \min_{\DerivMatrix \in {\AllDerivMatrices_i}}\abs{\det{(\DerivMatrix)}}$ where $\AllDerivMatrices_i$ is the set of all $\dimension \times \dimension$ matrices $D$ whose $j,m$ entry satisfies $\DerivMatrix_{jm} = \frac{\partial \FuncApprox_{j}}{\partial x_m} (c_{jm})$ for some $c_{jm}$ in interval $J_i$. Corollary~\ref{crly:Bound On All Degree Coeffs Cheb ND} shows that the matrix $A$ of linear Chebyshev coefficients (see Definition~\ref{def: Notation Checks})
        lies in $\AllDerivMatrices_i$, and thus $\MinDetDeriv_i$ is a lower bound on $\abs{\det{\LinMatrix}}$.  At a simple zero $\z$ of $\FuncApprox_1,\dots, \FuncApprox_n$, the determinant $\det\left.\left(\frac{\partial \FuncApprox_i}{\partial x_j}\right|_{\z}\right)$ of the Jacobian is nonzero, so continuity implies that  $\MinDetDeriv_i$ is nonzero, provided the current interval $J_i$ contains a simple zero of $\FuncApprox_1,\dots, \FuncApprox_n$ and is sufficiently small. 
 
        \item $\MaxFirstDeriv$: Let $\MaxFirstDeriv_{i,m} = \max_{j,{\x}} \abs{\frac{\partial}{\partial x_m}\FuncApprox_j({\x})}$, where the maximum is taken over all $\x\in J_i$. This is the maximum magnitude of the derivative in coordinate $m$ of any of the polynomial approximations  on the interval $J_i$  
        \item $\UglyProd$: Let $\UglyProd_{i,m} = (\dimension - 1)! \frac{\prod_{\ell \ne m} \MaxFirstDeriv_{\ell,i}}{\MinDetDeriv_i} \sum_{j=1}^n \sum_{\norm{\k}_1 > 1} \MaxFuncDeriv_{j,\k}$, and let $\UglyProd_i = \max_m \UglyProd_{i,m}$. This is the convergence factor  for polynomials with no approximation error. 
        \item \hypertarget{def:basecaseerror}{} $\BaseCaseError$: Let $\BaseCaseError_{i,m} = (\dimension - 1)! \frac{\prod_{\ell \ne m} \MaxFirstDeriv_{i,\ell}}{\MinDetDeriv_i} \sum_{j=1}^n \ApproxError_j$, and let  $\BaseCaseError_i = \max_m \BaseCaseError_{i,m}$. This is the contribution to the convergence factor of the approximation errors.
    \end{itemize}
\end{definition}

We can now bound how much the interval will shrink at a given iteration.
\begin{lemma} For each $m \in \{1,\dots, n\}$ and each $i\in \N$ we have
\[
\RatioReduc_{i+1,m} \le \UglyProd_{i,m} + \BaseCaseError_{i,m}.
\]
\label{alpha_le}
\end{lemma}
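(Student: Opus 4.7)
The plan is to chain together the explicit output of the second reduction method with a cofactor bound on $\LinMatrix^{-1}$ and the coefficient-to-derivative estimates of Section~\ref{sec:cheb-derivatives}.

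First I would recall from Section~\ref{sec:reduction} that, after rescaling to the standard interval at step $i$, any zero of the proxy system must lie in the parallelepiped $\bar{\x}=-\LinMatrix^{-1}\ConstantVec+\LinMatrix^{-1}\y$ with $|y_k|\le \LinApproxErr_k$. Projecting onto coordinate $m$ and applying the triangle inequality, the deviation of $\bar x_m$ from the center $-(\LinMatrix^{-1}\ConstantVec)_m$ is at most $\sum_{k=1}^n |(\LinMatrix^{-1})_{mk}|\LinApproxErr_k$. Since the current interval has unit half-width in each coordinate after rescaling, this yields
\[
\RatioReduc_{i+1,m}\ \le\ \sum_{k=1}^n |(\LinMatrix^{-1})_{mk}|\,\LinApproxErr_k.
\]

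Next I would estimate $|(\LinMatrix^{-1})_{mk}|$ uniformly in $k$. By the cofactor formula $(\LinMatrix^{-1})_{mk}=(-1)^{m+k}\det(\LinMatrix^{(km)})/\det\LinMatrix$, where $\LinMatrix^{(km)}$ denotes $\LinMatrix$ with its $k$th row and $m$th column deleted, Corollary~\ref{crly:Bound On All Degree Coeffs Cheb ND} identifies each entry $\LinMatrix_{j,\ell}=\partial\FuncApprox_j/\partial x_\ell$ evaluated at some point of $[-1,1]^n$, so $|\LinMatrix_{j,\ell}|\le\MaxFirstDeriv_{i,\ell}$. Bounding the Leibniz expansion of $\det(\LinMatrix^{(km)})$ column by column then gives $|\det(\LinMatrix^{(km)})|\le(\dimension-1)!\prod_{\ell\ne m}\MaxFirstDeriv_{i,\ell}$, while the denominator satisfies $|\det\LinMatrix|\ge\MinDetDeriv_i$ by the definition of $\MinDetDeriv_i$. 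Hence
\[
|(\LinMatrix^{-1})_{mk}|\ \le\ \frac{(\dimension-1)!\prod_{\ell\ne m}\MaxFirstDeriv_{i,\ell}}{\MinDetDeriv_i},
\]
a bound independent of $k$.

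Finally I would expand $\LinApproxErr_k=\ApproxError_k+\sum_{\|\boldsymbol\nu\|_1\ge 2}|\coeff_{\boldsymbol\nu}|$ for the coefficients of $\FuncApprox_k$ and apply Corollary~\ref{crly:Bound On All Degree Coeffs Cheb ND} once more to get $|\coeff_{\boldsymbol\nu}|\le\MaxFuncDeriv_{k,\boldsymbol\nu}$. Summing over $k$ yields $\sum_{k=1}^n \LinApproxErr_k\le\sum_{k=1}^n \ApproxError_k+\sum_{j=1}^n\sum_{\|\k\|_1>1}\MaxFuncDeriv_{j,\k}$. Pulling the $k$-independent cofactor bound out of the first display and comparing with the definitions of $\UglyProd_{i,m}$ and $\BaseCaseError_{i,m}$ reads off exactly $\RatioReduc_{i+1,m}\le\UglyProd_{i,m}+\BaseCaseError_{i,m}$.

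The hard part is really just bookkeeping: keeping cofactor rows/columns aligned with the product $\prod_{\ell\ne m}$, and verifying that Corollary~\ref{crly:Bound On All Degree Coeffs Cheb ND} legitimately applies to the iteratively transformed proxies---which it does, since at each iteration the $\FuncApprox_j$ are Chebyshev polynomials on the standard interval $[-1,1]^n$ obtained via the transformation matrix of Section~\ref{sec:transformation}.
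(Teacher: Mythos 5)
Your proposal is correct and follows essentially the same route as the paper's own proof: starting from the coordinatewise bound $\RatioReduc_{i+1,m}\le\sum_k|(\LinMatrix^{-1})_{mk}|\LinApproxErr_k$ produced by the second reduction method, bounding $\LinMatrix^{-1}$ via the adjugate/cofactor formula together with the entrywise bound $|\LinMatrix_{j,\ell}|\le\MaxFirstDeriv_{i,\ell}$ from Corollary~\ref{crly:Bound On All Degree Coeffs Cheb ND} and $|\det\LinMatrix|\ge\MinDetDeriv_i$, and finally expanding $\LinApproxErr_k$ and applying $|\coeff_{\k}|\le\MaxFuncDeriv_{j,\k}$ to separate the $\UglyProd$ and $\BaseCaseError$ contributions. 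You are in fact a bit more explicit than the paper on the minor-indexing in the cofactor bound and on the step replacing $\sum_{\|\k\|_1\ge 2}|\coeff_\k|$ by $\sum_{\|\k\|_1>1}\MaxFuncDeriv_{j,\k}$, but the argument is the same.
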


\begin{proof}
At step $i$ the algorithm gives $\RatioReduc_{i+1,m} \le \sum_{j=1}^n \abs{\LinMatrix^{-1}_{m,j}}\LinApproxErr_j$. Cramer's rule gives  $\RatioReduc_{i+1,m} \le \sum_{j=1}^n \abs{\frac{\adj(\LinMatrix)_{m,j}}{\det{\LinMatrix}}}\LinApproxErr_j$. Corollary~\ref{crly:Bound On All Degree Coeffs Cheb ND} implies that the magnitude of each element of column $m$ of $\LinMatrix$ is bounded by  $\MaxFirstDeriv_{i,m}$, so by cofactor expansion, $\adj(A)_{m,j} \le (n-1)!\prod_{\ell \ne m}\MaxFirstDeriv_{i,\ell}$. Thus $\RatioReduc_{i+1,m} \le \sum_{j=1}^n (n-1)!\frac{\prod_{\ell \ne m}\MaxFirstDeriv_{i,\ell}}{\MinDetDeriv_i}\LinApproxErr_j \le \UglyProd_{i,m} + \BaseCaseError_{i,m}$, using $E_j =  \ApproxError_j + \sum_{\norm{\k} > 1} \MaxFuncDeriv_{j,\k}$.
\end{proof}

This can be used to bound how much an interval shrinks after $i$ iterations.
\begin{lemma}\label{lem:alphabound} For each $m\in\{1,\dots, n\}$ and each $i\in \mathbb{N}$ we have 
\[\RatioReduc_{i+1,m} \le \frac{\TotalReduc_{i}^2\UglyProd_{0,m}}{\TotalReduc_{i,m}}   + \BaseCaseError_{i,m} \le \frac{\TotalReduc_{i}^2\UglyProd_{0,m}   + \BaseCaseError_{0,m}}{\TotalReduc_{i,m}}.
\]
\end{lemma}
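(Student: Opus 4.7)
The plan is to bootstrap from Lemma~\ref{alpha_le}, which already supplies the pointwise bound $\RatioReduc_{i+1,m}\le \UglyProd_{i,m}+\BaseCaseError_{i,m}$, and then to compare the iteration-$i$ quantities $\UglyProd_{i,m}$ and $\BaseCaseError_{i,m}$ to their iteration-$0$ counterparts by tracking how the affine rescaling of $J_i$ onto the standard interval $[-1,1]^\dimension$ distorts the Chebyshev coefficients and partial derivatives appearing in Definition~\ref{def:quadratic-notation}. The two inequalities in the lemma correspond respectively to translating only $\UglyProd$ back to iteration $0$, and to additionally translating $\BaseCaseError$; concretely, the entire lemma reduces to proving $\UglyProd_{i,m}\le \TotalReduc_i^{2}\,\UglyProd_{0,m}/\TotalReduc_{i,m}$ and $\BaseCaseError_{i,m}\le \BaseCaseError_{0,m}/\TotalReduc_{i,m}$, substituting these into Lemma~\ref{alpha_le}, and reading off the two displayed bounds.

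To establish the scaling relations, I would use the fact that the affine map sending $J_i$ onto $[-1,1]^\dimension$ stretches the $m$th coordinate by the factor $2/\gamma_{i,m}$, so by the chain rule every mixed partial derivative of the rescaled iteration-$i$ polynomial equals $\prod_{m}(\gamma_{i,m}/2)^{k_m}$ times the corresponding partial derivative of $\FuncApprox_j$ in the original coordinates on $J_i$. Combining this with the monotonicity that the max of a positive quantity over the nested intervals $J_i\subseteq J_0$ can only decrease, and that the min of $\abs{\det}$ over the nested derivative-matrix families $\AllDerivMatrices_i\subseteq\AllDerivMatrices_0$ can only increase, yields $\MaxFirstDeriv_{i,\ell}\le \TotalReduc_{i,\ell}\MaxFirstDeriv_{0,\ell}$, together with an analogous bound on $\MaxFuncDeriv_{j,\k}$ at iteration $i$ by $\TotalReduc_i^{\norm{\k}_1}$ times its iteration-$0$ value. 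Multilinearity of the determinant similarly gives $\MinDetDeriv_i\ge \bigl(\prod_{\ell=1}^\dimension \TotalReduc_{i,\ell}\bigr)\MinDetDeriv_0$.

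Substituting these three estimates into the defining expression for $\UglyProd_{i,m}$, the factor $\prod_{\ell\ne m}\TotalReduc_{i,\ell}$ introduced in the numerator by $\MaxFirstDeriv$ and the factor $\prod_{\ell=1}^\dimension\TotalReduc_{i,\ell}$ introduced in the denominator by $\MinDetDeriv_i$ telescope down to $1/\TotalReduc_{i,m}$, while every higher-order term with $\norm{\k}_1\ge 2$ contributes a factor of at most $\TotalReduc_i^{2}$ (using $\TotalReduc_{i,m}\le \TotalReduc_i\le 1$). This produces $\UglyProd_{i,m}\le \TotalReduc_i^{2}\UglyProd_{0,m}/\TotalReduc_{i,m}$. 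The same substitution into $\BaseCaseError_{i,m}$, which contains no higher-order $\MaxFuncDeriv$ factor, telescopes to $\BaseCaseError_{i,m}\le \BaseCaseError_{0,m}/\TotalReduc_{i,m}$. Plugged into Lemma~\ref{alpha_le}, the first of these directly yields the first displayed inequality, and combining both over the common denominator $\TotalReduc_{i,m}$ yields the second.

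The main obstacle I expect is disciplined bookkeeping: the chain-rule prefactors $\prod_m(\gamma_{i,m}/2)^{k_m}$ from Corollary~\ref{crly:Bound On All Degree Coeffs Cheb ND} applied to the transformed iteration-$i$ polynomial must line up precisely with the $\prod_{\ell=1}^\dimension \TotalReduc_{i,\ell}$ produced out of $\MinDetDeriv_i$ in the denominator, so that all the rescaling factors telescope cleanly and leave exactly a single $1/\TotalReduc_{i,m}$ behind. Once the three scaling comparisons are set up correctly, the cancellation and the extra $\TotalReduc_i^{2}$ from the $\norm{\k}_1\ge 2$ restriction are automatic, and the two-line conclusion from Lemma~\ref{alpha_le} is immediate.
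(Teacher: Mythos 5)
Your proposal is correct and follows essentially the same route as the paper's proof: it starts from Lemma~\ref{alpha_le}, establishes the three rescaling estimates $\MaxFirstDeriv_{i,\ell}\le\TotalReduc_{i,\ell}\MaxFirstDeriv_{0,\ell}$, $\MinDetDeriv_i\ge\bigl(\prod_\ell\TotalReduc_{i,\ell}\bigr)\MinDetDeriv_0$, and $\sum_{\norm{\k}_1>1}\MaxFuncDeriv_{i,(j,\k)}\le\TotalReduc_i^2\sum_{\norm{\k}_1>1}\MaxFuncDeriv_{0,(j,\k)}$ via the chain rule on the affine rescaling together with nesting $J_i\subseteq J_0$, and then telescopes the $\TotalReduc_{i,\ell}$ factors to obtain $\UglyProd_{i,m}\le\TotalReduc_i^2\UglyProd_{0,m}/\TotalReduc_{i,m}$ and $\BaseCaseError_{i,m}\le\BaseCaseError_{0,m}/\TotalReduc_{i,m}$. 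These are precisely the paper's intermediate inequalities~\eqref{eq:MinDetDeriv}, \eqref{eq:MaxFirstDeriv}, \eqref{eq:MaxFuncDeriv}, and \eqref{eq:UglyProd}, so the argument is a faithful, slightly more explicit rendition of the paper's own proof.
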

Note that the $\TotalReduc^2_i$ term of the numerator involves not $\TotalReduc_{i,m}$, but rather  $\TotalReduc_i = \max_{m} \TotalReduc_{i,m}$. 
\begin{proof}
The interval rescaling that transforms $J_i$ to to the standard interval $I$ means that the polynomials $\FuncApprox_j$ are all rescaled by $\RatioReduc_{i,m}$ in coordinate $m$, so the chain rule guarantees that each partial derivative in coordinate $m$ will be scaled by $\RatioReduc_{i,m}$.  The fact that $J_{i} \subseteq J_0 = I$ means that  $\AllDerivMatrices_{i}\subseteq \AllDerivMatrices_0$, which implies that        \begin{equation}\label{eq:MinDetDeriv}
            \MinDetDeriv_{i}\ge \left(\prod_{m=1}^n \TotalReduc_{i,m} \right)\MinDetDeriv_0
        \end{equation}
        for every $i\in \N$ and every $m\in \{1,\dots, n\}$.
Similarly, we have \begin{equation}\label{eq:MaxFirstDeriv}
            \MaxFirstDeriv_{i,m} \le \TotalReduc_{i,m} \MaxFirstDeriv_{0,m}  
        \end{equation}
        for every $i\in \N$ and $m\in \{1,\dots, n\}$.

Moreover, if $\MaxFuncDeriv_{i,(j,\k)}$ denotes the value of $\MaxFuncDeriv_{j,\k}$ at step $i$, then transforming $J_i$ to $J_0=I$ gives 
\begin{equation}\label{eq:MaxFuncDeriv}
\sum_{\norm{\k}_1 > 1} \MaxFuncDeriv_{i, (j,\k)}
\le \TotalReduc_i^2 \sum_{\norm{\k}_1 > 1} \MaxFuncDeriv_{0, (j,\k)},
\end{equation}
where $\sigma_i = \max_{m} \sigma_{i,m}$
Combining Equations~\eqref{eq:MinDetDeriv}, \eqref{eq:MaxFirstDeriv}, and \eqref{eq:MaxFuncDeriv}
gives 
\begin{equation} \label{eq:UglyProd}
    \UglyProd_{i,m} \le \frac{\TotalReduc_i^2}{\TotalReduc_{i,m}}
    \UglyProd_{0,m}
\dsand 
 \BaseCaseError_{i,m} \le 
    \frac{\BaseCaseError_{0,m}}{\TotalReduc_{i,m}}
\end{equation}
        
Putting these together gives  
\[
\RatioReduc_{i+1,m} \le \UglyProd_{i,m} + \BaseCaseError_{i,m}  \le \frac{\TotalReduc_{i}^2\UglyProd_{0,m}}{\TotalReduc_{i,m}}   + \BaseCaseError_{i,m} \le  \frac{\UglyProd_{0,m}  \TotalReduc_i^2 + \BaseCaseError_{0,m}}{\TotalReduc_{i,m}},
\]
as required.
\end{proof}

We can now bound the total size of the interval after any number of steps. 
\begin{corollary}\label{crly: ND Linear Check Convergence}
The following holds:
\begin{align}     
\TotalReduc_{i+1,m} &\le \UglyProd_0 \TotalReduc_i^2  +  \TotalReduc_{i,m} \BaseCaseError_{i,m}
 \le 
 \UglyProd_0 \TotalReduc_i^2 
+ \BaseCaseError_0 \label{eq:quadratic-plus}  
\end{align}
Moreover the sequence $(\sigma_{i,m} \BaseCaseError_{i,m})_{i=0}^\infty$ is decreasing in $i$.
\end{corollary}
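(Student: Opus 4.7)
The plan is to treat the corollary as a direct consequence of Lemma~\ref{lem:alphabound} together with the scaling relation~\eqref{eq:UglyProd}, with a separate short argument for the monotonicity claim. The key observation linking everything is the definitional identity $\TotalReduc_{i+1,m} = \RatioReduc_{i+1,m}\,\TotalReduc_{i,m}$, which lets me convert per-step ratio bounds into total-shrinkage bounds simply by multiplying through by $\TotalReduc_{i,m}$.

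First I would take the inequality of Lemma~\ref{lem:alphabound}, namely
\[
\RatioReduc_{i+1,m} \le \frac{\TotalReduc_i^2 \UglyProd_{0,m}}{\TotalReduc_{i,m}} + \BaseCaseError_{i,m},
\]
and multiply both sides by $\TotalReduc_{i,m}$. The left side becomes $\TotalReduc_{i+1,m}$, the first term on the right collapses to $\TotalReduc_i^2 \UglyProd_{0,m}$, and the second term becomes $\TotalReduc_{i,m}\BaseCaseError_{i,m}$. Replacing $\UglyProd_{0,m}$ by the larger quantity $\UglyProd_0 = \max_m \UglyProd_{0,m}$ yields the first inequality of \eqref{eq:quadratic-plus}.

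Next I would prove the second inequality. From~\eqref{eq:UglyProd} we already have $\BaseCaseError_{i,m} \le \BaseCaseError_{0,m}/\TotalReduc_{i,m}$, so $\TotalReduc_{i,m}\BaseCaseError_{i,m} \le \BaseCaseError_{0,m} \le \BaseCaseError_0$, and the chain of inequalities closes immediately.

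Finally, for the monotonicity of $(\TotalReduc_{i,m}\BaseCaseError_{i,m})_{i=0}^\infty$, the idea is to apply the exact same scaling argument that produced~\eqref{eq:UglyProd} but with $J_i$ playing the role of the base interval and $J_{i+1}$ playing the role of the iterated interval. Since $J_{i+1}\subseteq J_i$, repeating the derivations of \eqref{eq:MinDetDeriv} and \eqref{eq:MaxFirstDeriv} with these shifted endpoints gives $\BaseCaseError_{i+1,m}\le \BaseCaseError_{i,m}/\RatioReduc_{i+1,m}$. Multiplying by $\TotalReduc_{i+1,m} = \RatioReduc_{i+1,m}\TotalReduc_{i,m}$ gives $\TotalReduc_{i+1,m}\BaseCaseError_{i+1,m} \le \TotalReduc_{i,m}\BaseCaseError_{i,m}$, as required. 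The main (very mild) obstacle is just bookkeeping: making sure the rescaling factors on partial derivatives and on the determinant of the Jacobian combine in the right way so that the $\sum_j \ApproxError_j$ factor in $\BaseCaseError$ is untouched by the transformation, while the geometric prefactor scales exactly by $1/\RatioReduc_{i+1,m}$; this is precisely the same calculation already done in the proof of Lemma~\ref{lem:alphabound}, now applied one step at a time.
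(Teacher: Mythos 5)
Your proof is correct and follows essentially the same route as the paper: multiply the inequality of Lemma~\ref{lem:alphabound} through by $\TotalReduc_{i,m}$ and invoke $\TotalReduc_{i+1,m}=\RatioReduc_{i+1,m}\TotalReduc_{i,m}$ for the main chain, then reuse the scaling argument behind~\eqref{eq:UglyProd}, stepped from $J_i$ to $J_{i+1}$, to obtain the one-step inequality $\BaseCaseError_{i+1,m}\RatioReduc_{i+1,m}\le\BaseCaseError_{i,m}$ and hence the monotonicity. You have merely spelled out the intermediate algebra (e.g.\ the observation $\TotalReduc_{i,m}\BaseCaseError_{i,m}\le\BaseCaseError_{0,m}\le\BaseCaseError_0$) that the paper compresses into a single sentence.
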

\begin{proof}
Multiply the result of Lemma~\ref{lem:alphabound} by 
$\TotalReduc_{i,m}$, and use  $\TotalReduc_{i+1,m} = \RatioReduc_{i+1,m}\TotalReduc_{i,m}$.  
The same argument as used for \eqref{eq:UglyProd} shows that $ \BaseCaseError_{i,m}\TotalReduc_{i,m} \le \BaseCaseError_{i-1,m} \TotalReduc_{i-1,m}$ for all $i$, showing that the sequence  $(\sigma_{i,m} \BaseCaseError_{i,m})_{i=0}^\infty$ is decreasing.
\end{proof}
We now have all the pieces we need to finish the proof of quadratic convergence.
\begin{proof}(of Theorem~\ref{thm:quadratic})
If the approximation errors $\ApproxError_1,\dots, \ApproxError_n$ are all $0$, then $\BaseCaseError_{i,m} = 0$ for all $i$ and $m$, and thus Corollary~\ref{crly: ND Linear Check Convergence} implies $(\TotalReduc_i)_{i=0}^\infty$ 
converges quadratically to $0$, and hence Equation~\eqref{eq:interval-bounded-by-sigma} guarantees that $J_0\supseteq J_1\supseteq \dots$ converges R-quadratically to the unique zero $\z$.

In the case that $\BaseCaseError_{i,m}>0$, multiplying \eqref{eq:quadratic-plus} by $\gamma_{0,m}$ gives the relation \eqref{eq:almost-quadratic-sides} with $K=\UglyProd_0 \gamma_{0,m}$. The fact that $(\sigma_{i,m} \BaseCaseError_{i,m})_{i=0}^\infty$ is decreasing was established in Corollary~\ref{crly: ND Linear Check Convergence}, so as $\sigma_{i,m}\gamma_{0,m} = \gamma_{i,m}$, $(\gamma_{i,m} \BaseCaseError_{i,m})_{i=0}^\infty$ is also decreasing.
This completes the proof of Theorem~\ref{thm:quadratic}.

\end{proof}

Because we generally have $\BaseCaseError > 0$, we need to know approximately how small to expect repeated applications of the 
reduction method to shrink the interval, which tells us when to stop zooming in (this is the base case of the recursion described in Section~\ref{sec:recursion}).
The following theorem motivates our choice of the test to identify the base case.

\begin{theorem} \label{thm: ND Linear Check Convergence Error Bound}
The limit  $\TotalReduc = 
\lim_{i \to \infty} \TotalReduc_i$ exists and \[
\TotalReduc \le \BaseCaseError_0 \sum_{n=0}^\infty \frac{(\UglyProd \BaseCaseError_0)^n}{n+1} \binom{2n}{n} < 2.5 \BaseCaseError_0.
\]
\end{theorem}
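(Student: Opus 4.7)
The plan is to observe that Corollary~\ref{crly: ND Linear Check Convergence} reduces the problem to analyzing the one-variable iteration $T(x) = \UglyProd_0 x^2 + \BaseCaseError_0$, and that the claimed series is exactly the Taylor expansion of the smaller fixed point of $T$ via the generating function for the Catalan numbers. Existence of the limit is immediate: the intervals produced by the reduction method are nested, so $\gamma_{i,m}$ is non-increasing in $i$, hence so is each $\TotalReduc_{i,m} = \gamma_{i,m}/\gamma_{0,m}$, and therefore so is $\TotalReduc_i = \max_m \TotalReduc_{i,m}$. Being bounded below by $0$, the sequence converges to a limit $\TotalReduc \ge 0$.

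Next, I would exploit the one-dimensional recursion. Corollary~\ref{crly: ND Linear Check Convergence} gives $\TotalReduc_{i+1} \le T(\TotalReduc_i)$, and $T$ is non-decreasing on $[0,\infty)$, so induction yields $\TotalReduc_i \le T^{(i)}(\TotalReduc_0) = T^{(i)}(1)$. Solving $T(x) = x$ produces the two roots $x^* \le x^{**}$ of $\UglyProd_0 x^2 - x + \BaseCaseError_0 = 0$, with
\[
x^* = \frac{1 - \sqrt{1 - 4\UglyProd_0 \BaseCaseError_0}}{2\UglyProd_0},
\]
using the smallness hypothesis on $J_0$ (already invoked in Theorem~\ref{thm:quadratic}) to guarantee $4\UglyProd_0 \BaseCaseError_0 \le 1$. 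On $[x^*, x^{**}]$ the inequality $T(x) \le x$ holds, so the iterates $T^{(i)}(1)$ descend monotonically to $x^*$ provided $1 \in [x^*, x^{**}]$. This gives $\TotalReduc \le x^*$.

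Finally, I would convert $x^*$ into the claimed series via the Catalan generating function identity
\[
\sum_{n=0}^\infty \frac{1}{n+1}\binom{2n}{n} y^n = \frac{1 - \sqrt{1-4y}}{2y}, \qquad 0 \le y \le \tfrac{1}{4}.
\]
Setting $y = \UglyProd_0 \BaseCaseError_0$ and multiplying through by $\BaseCaseError_0$ yields $x^* = \BaseCaseError_0 \sum_{n=0}^\infty \frac{(\UglyProd_0 \BaseCaseError_0)^n}{n+1}\binom{2n}{n}$, which is exactly the right-hand side of the theorem. Since that generating function is increasing in $y$ on $[0, \tfrac{1}{4}]$ and equals $2$ at $y = \tfrac{1}{4}$, the sum is at most $2$, giving $x^* \le 2\BaseCaseError_0 < 2.5\BaseCaseError_0$.

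The main obstacle is in the fixed-point step: the inequality $\TotalReduc \le T(\TotalReduc)$ by itself only forces $\TotalReduc \in [0, x^*] \cup [x^{**}, \infty)$, and one must rule out the upper branch. The iterated-bound argument accomplishes this, but it leans on the standing assumption that $J_0$ is small enough that $\TotalReduc_0 = 1$ sits in the basin $[x^*, x^{**}]$ of the smaller fixed point; this is precisely the regime in which the reduction method is contracting to begin with, and is inherited from the hypotheses of Theorem~\ref{thm:quadratic}. Everything after that is a generating-function identification followed by a one-line numerical estimate.
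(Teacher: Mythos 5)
Your proof is essentially correct and follows the same broad outline as the paper's: monotone decrease of $\TotalReduc_i$ gives existence of the limit, the Corollary reduces to the scalar recursion $T(x)=\UglyProd_0 x^2 + \BaseCaseError_0$, and the bound is the smaller fixed point $x^*$, expanded as a power series. Where you differ is in two worthwhile places. First, you make the fixed-point argument precise: the paper says "otherwise the algorithm would keep shrinking," which leaves open why the limit should be at the smaller rather than larger fixed point; you instead observe that $T$ is increasing on $[0,\infty)$ and that $\TotalReduc_0=1$ lies in the basin $[x^*,x^{**}]$ where $T(x)\le x$, so the iterates $T^{(i)}(1)$ decrease to $x^*$ — and indeed the paper's standing hypothesis $\UglyProd_0+\BaseCaseError_0<1$ is exactly $T(1)<1$, which is precisely the condition $1\in(x^*,x^{**})$. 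Second, and more substantively, you obtain a sharper final constant. The paper expands $\sqrt{1-4\UglyProd\BaseCaseError}$ via the generalized binomial theorem (equivalent to your Catalan generating function identity) and then bounds the resulting series term-by-term with Stirling's formula, arriving at $\BaseCaseError_0\bigl(1+\zeta(3/2)/\sqrt{\pi}\bigr)\approx 2.474\,\BaseCaseError_0$. You instead note that the Catalan generating function $\sum_n C_n y^n = \frac{1-\sqrt{1-4y}}{2y}$ is increasing on $[0,\tfrac14]$ and evaluates to exactly $2$ at $y=\tfrac14$, giving $\TotalReduc \le 2\BaseCaseError_0$, which both subsumes the paper's $<2.5\BaseCaseError_0$ and shows the constant $2.5$ could be replaced by $2$ without any change to the rest of the paper. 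Both approaches are valid; yours is tighter and slightly cleaner, while the paper's Stirling route makes the asymptotic size of the terms explicit.
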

\begin{proof}
For convenience, let $\BaseCaseError = \BaseCaseError_0$.  The sequence $(\TotalReduc_i)_{i=0}^{\infty}$ is monotonically decreasing and bounded below by $0$, so it must converge to some $\TotalReduc$. Corollary~\ref{crly: ND Linear Check Convergence} shows that the maximum possible value of $\TotalReduc$ will satisfy 
\begin{equation*}
    \label{eq:fixed-points}
\TotalReduc = \UglyProd \TotalReduc^2 + \BaseCaseError,
\end{equation*}
otherwise the algorithm  would keep shrinking. This is a quadratic map with fixed points $\frac{1 \pm \sqrt{1-4\UglyProd \BaseCaseError}}{2\UglyProd}$. The map is decreasing between the fixed points and increasing outside of them, so the bound will be at the smaller point $\frac{1 - \sqrt{1-4\UglyProd \BaseCaseError}}{2\UglyProd}$, if this point is real.

Assuming we are in an interval where we can guarantee convergence, we have $\UglyProd + \BaseCaseError < 1, $ and thus $\UglyProd^2 + 2\UglyProd \BaseCaseError + \BaseCaseError^2 < 1$. Moreover, we have $(\psi - \BaseCaseError)^2 \ge 0$ so $\UglyProd^2 - 2\UglyProd \BaseCaseError + \BaseCaseError^2 \ge 0$. Combining these gives  $4\UglyProd \BaseCaseError < 1$. Thus the fixed points are indeed real.

By the generalized binomial theorem, the Taylor series of $\sqrt{1+x}$ is \begin{equation}\label{eq:binomial}
\sum_{n=0}^{\infty} \binom{\frac{1}{2}}{n} x^n = 1 + \sum_{n=1}^{\infty}\frac{(-1)^{n-1}x^n}{n 2^{2n-1}} \binom{2n-2}{n-1},
\end{equation}
which converges when $\abs{x} < 1$. Plugging $x = -4\UglyProd \bar{\epsilon}$
into~\eqref{eq:binomial}
gives
\[
\frac{1-\sqrt{1-4\UglyProd \BaseCaseError}}{2\UglyProd} = 
\frac{1}{2\UglyProd}\sum_{n=1}^{\infty}\frac{(4\UglyProd \BaseCaseError)^n}{n 2^{2n-1}} \binom{2n-2}{n-1} = 
\BaseCaseError \sum_{n=0}^\infty \frac{(\UglyProd \BaseCaseError)^n}{n+1} \binom{2n}{n}.
\]
We know that $\UglyProd \BaseCaseError < \frac{1}{4}$, and an extension of Sterling's Approximation \cite{Sterling} for $n > 0$ gives ${\sqrt {2\pi n}}\left({\frac {n}{e}}\right)^{n}e^{\frac {1}{12n+1}} < n! < {\sqrt {2\pi n}}\left({\frac {n}{e}}\right)^{n}e^{\frac {1}{12n}}$. Thus we get
\[
\BaseCaseError \sum_{n=0}^\infty \frac{(\UglyProd \BaseCaseError)^n}{n+1} \binom{2n}{n} \le
\BaseCaseError (1 + \sum_{n=1}^\infty \frac{\sqrt {4\pi n} (\frac{2n}{e})^{2n} e^{\frac{1}{24n}}}{4^n(n+1)(\sqrt {2\pi n} (\frac{n}{e})^{n} e^{\frac{1}{12n+1}})^2}) \le
 (\BaseCaseError + \sum_{n=1}^\infty \frac{\BaseCaseError}{n^{\frac{3}{2}}\sqrt{\pi}})
\]

Hence $\TotalReduc \le \BaseCaseError(1 + \frac{\zeta(1.5)}{\sqrt{\pi}}) \approx 2.474\BaseCaseError$, where $\zeta$ is the Riemann zeta function.
\end{proof}

When checking the base case we set all the terms of degree at least 2 to $0$, which then makes $\UglyProd = 0$ because it is an empty sum. By Lemma~\ref{alpha_le}, this implies $\IntWidth \leq \BaseCaseError$.
If the interval scaling $\RatioReduc$ is greater than $\frac{1}{2.5} = 0.4$, we stop because that would force $\BaseCaseError \geq 0.4$, which could permit $\TotalReduc > 1$.

As long as we have not reached the base case, if the reduction method does not zoom in, subdivision will decrease the term $\UglyProd_m$ until reduction does zoom in. The main condition needed for that is that $\MinDetDeriv > 0$, which must hold in a sufficiently small neighborhood of a simple zero.

It should be noted that the proofs in this section can be extended to other orthogonal bases of polynomials and to the usual basis of monomials, as the arguments rely on the connection between the coefficients and the derivatives given in Theorem~\ref{thm:Bound On All Degree Coeffs ND}.

\subsection{Behavior at Multiple Roots}

The problem of finding zeros that are not simple (multiple) is an ill-conditioned problem, meaning that tiny changes in the inputs can result in massive changes to the outputs.  Thus no algorithm can be expected to do a consistently good job of 
finding all multiple or near-multiple roots.
But some solvers fail more severely when applied to multiple roots. Our algorithm still correctly bounds these roots and makes progress towards the correct answer.   Quadratic convergence is no longer guaranteed, but all subintervals that are eliminated are correctly discarded. Although the reduction steps can still be applied near a multiple root, they generally will not shrink the interval much and thus our algorithm will likely subdivide to the base case (with linear, instead of quadratic convergence).  If the multiplicity of the root is very high, the subdivision could go too deep and exceed the memory capacity of the machine.  Our implementation throws a warning if the recursion goes too deep before roots were found. 

Although the quadratic convergence  guarantees no longer hold for multiple (or near-multiple) roots, in numerical tests our solver \emph{YRoots} seems to find multiple and near-multiple roots reasonably rapidly and accurately, without significant difficulty, including on the famously challenging near-multiple  \emph{devastating examples} of Noferini and Townsend \cite{Noferini, OurMacaulayPaper}.  
In our tests if near-multiple roots were too close together, then \emph{YRoots} sometimes found a single small box enclosing more than one.  See Section~\ref{sec:multiple-numerical} for more details about numerical tests of multiple and near-multiple roots.

\section{Arithmetic Complexity} \label{section: complexity}

Here we present an analysis of the arithmetic complexity of our algorithm. For simplicity of the analysis, we assume that we are solving a system of $n$ functions in $n$ dimensions, each of which is approximated by a polynomial of degree $(d-1)$ in each dimension, and thus represented by a tensor of dimension $n$ and size $d$ in each dimension. It is straightforward to extend the following analysis to polynomials with differing degrees in each dimension.

The first part of our solver, the Chebyshev approximator, is easy to analyze. Because we double the degree at each step of the degree search, its complexity is dominated by either the function evaluations or the final FFT, so is $O(n (d^n\log(d)) + F d^n)$ where $F$ is the cost of a single function evaluation. Which of these terms dominates will depend on $F$. 

In the rest of this section, we analyze the arithmetic complexity of just the Chebyshev polynomial solver. First we require some mathematical background.

\subsection{Tau} \label{subsection: Tau}

One of the main questions in analyzing the complexity of the algorithm is how the degree of a Chebyshev polynomial changes after subdivision. The subdivision happens one coordinate at a time so we consider this question in one dimension: for a given $\IntWidth$, $\IntCenter$, what is the numerical degree of $T_n(\IntWidth x + \IntCenter) = \sum_{k=0}^n \coeff_k T_k(x)$?  For the following analysis we assume $0 < \IntWidth < 1$ and $0 \le \IntCenter \le 1 - \IntWidth$. We can ignore $\IntCenter < 0$ because, by symmetry, $T_n(\IntWidth x + \IntCenter)$ has the same degree as $T_n(\IntWidth x - \IntCenter)$.  Following the notation in \cite{Townsend}, we use $\tau$ to denote the rate at which the degree drops when subdividing.
\begin{definition} \label{def: Tau}
Fix $\IntWidth, \IntCenter$. Let $\gamma > 0$. Define $g(\gamma, n)$ as the $\ceil{n\gamma}$th coefficient of the Chebyshev expansion of $T_n(\IntWidth x + \IntCenter)$. Define $\tau_{\IntWidth,\IntCenter} = \min \{\gamma | \lim_{n \to \infty} g(\gamma, n) = 0\}$, and $\tau_{\IntWidth} = \sup_{\IntCenter}(\tau_{\IntWidth,\IntCenter})$.
\end{definition}

Intuitively, for large $n$, the function  $T_n(\IntWidth x + \IntCenter)$ will be of degree $n\tau_{\IntWidth, \IntCenter}$, and scaling an interval by a factor of $\IntWidth$ will scale the degree by  $\tau_\IntWidth$ or smaller. We now prove some bounds on values of $\tau$. This analysis heavily involves Bernstein ellipses. 

\begin{definition}
    For $p > 1$, the \emph{Bernstein ellipse} $E_p$ is the ellipse with foci at $\pm 1$ with major axis length $\frac{1}{2}(p + p^{-1})$ and minor axis length $\frac{1}{2}(p - p^{-1})$. We use the notation $\IntWidth E_p + \IntCenter$ to mean the new ellipse obtained by scaling every point in $E_p$ by the transformation $z \rightarrow \IntWidth z +\IntCenter$.
\end{definition}
\begin{lemma} \label{Lemma: Coeff Bound from Ellipse}
Let $p \ge q > 1$ and assume that $\IntWidth E_p + \IntCenter$ lies within the ellipse  $E_q$. If $T_n(\IntWidth z + \IntCenter) = \sum_{k = 0}^n \coeff_k T_k(z)$, then $\coeff_k \le \frac{q^n + q^{-n}}{p^k}$.
\end{lemma}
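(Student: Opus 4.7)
My plan is to derive the bound by combining two classical facts via the Joukowsky map $z = \tfrac{1}{2}(w + w^{-1})$: first, Chebyshev polynomials satisfy $T_n(z) = \tfrac{1}{2}(w^n + w^{-n})$, and second, analyticity inside a Bernstein ellipse forces exponential decay of the Chebyshev coefficients. Recall that $E_r$ is precisely the image of the circle $|w|=r$ under this map.

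First, I would establish a uniform bound on $T_n$ over the Bernstein ellipse $E_r$. For any $\zeta \in E_r$, writing $\zeta = \tfrac{1}{2}(w + w^{-1})$ with $|w|=r$ yields
\[
|T_n(\zeta)| \;=\; \tfrac{1}{2}\bigl|w^n + w^{-n}\bigr| \;\le\; \tfrac{1}{2}\bigl(r^n + r^{-n}\bigr).
\]

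Next, I would apply this with $r=q$ together with the hypothesis $\IntWidth E_p + \IntCenter \subseteq E_q$. For any $z$ on $E_p$, the affine image $\IntWidth z + \IntCenter$ lies in $E_q$, so
\[
|T_n(\IntWidth z + \IntCenter)| \;\le\; \tfrac{1}{2}(q^n + q^{-n}) \qquad \text{for all } z \in E_p.
\]
Since $g(z) := T_n(\IntWidth z + \IntCenter)$ is a polynomial in $z$, it is entire and in particular analytic on a neighborhood of the closed region bounded by $E_p$.

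Finally, I would invoke the standard Chebyshev coefficient bound (for instance Theorem~8.1 of Trefethen's \emph{Approximation Theory and Approximation Practice}): if $g$ is analytic in a neighborhood of the closed region bounded by $E_p$ and $|g|\le M$ there, then its Chebyshev coefficients satisfy $|a_0|\le M$ and $|a_k|\le 2Mp^{-k}$ for $k\ge 1$. Taking $M = \tfrac{1}{2}(q^n + q^{-n})$ yields $|\coeff_k|\le (q^n+q^{-n})/p^k$ for $k\ge 1$, while the $k=0$ case follows from the even weaker bound $|\coeff_0|\le M \le (q^n+q^{-n})/p^0$.

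The hard part is purely bookkeeping: keeping the factor of $2$ straight between $k\ge 1$ and $k=0$, and interpreting ``lies within'' as containment in the closed region bounded by $E_q$ so that the supremum bound on $E_p$ transfers through the Cauchy-type integral formula for the Chebyshev coefficients. There is no analytic difficulty, since both ingredients are classical and the composition $T_n\circ(\IntWidth z + \IntCenter)$ is itself a polynomial, hence entire.
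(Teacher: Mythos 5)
Your proof is correct and follows essentially the same route as the paper: bound $T_n$ on the Bernstein ellipse $E_q$ via the Joukowsky map, transfer that to a supremum bound for $T_n(\IntWidth z + \IntCenter)$ on $E_p$ using the containment hypothesis, and then invoke the classical Chebyshev-coefficient decay estimate (Bernstein's theorem, i.e., Trefethen's Theorem~8.1) with $M = \tfrac{1}{2}(q^n + q^{-n})$. The only difference is cosmetic: you track the sharper $k=0$ constant explicitly, whereas the paper uses the uniform $2M/p^k$ form throughout.
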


\begin{proof}
It is known \cite{Bernstein} that if $\abs{f(z)} \le M(p)$ for $z \in E_p$, then $\coeff_k \le \frac{2M(p)}{p^k}$.
The \emph{Joukouwski transformation} is defined as $J(z) = \frac{z + z^{-1}}{2}$. If $w = e^{iz}$, then the following equality holds: \begin{equation}\label{eq:joukouwski} T_n(J(w)) = T_n(\cos(z)) = \cos(nz) = \frac{w^n + w^{-n}}{2}.
\end{equation}
As $T_n$ is holomorphic, the equality  \eqref{eq:joukouwski} must hold for all $w \in \mathbb{C}$. If $C_q$ is the circle of radius $q$ centered at the origin, then $J(C_q) = E_q$, and thus  $T_n(E_q) = \frac{C_p^n + C_q^{-n}}{2}$. This implies that for all $z$ contained within the ellipse $E_q$, there exists a real $w$ such that $z = J(re^{iw})$ for some $r\le q$, and thus we have 
\[
\abs{T_n(z)} = \abs*{\frac{(re^{iw})^n + (re^{iw})^{-n}}{2}} \le \abs*{\frac{(re^{iw})^n}{2}} + \abs*{\frac{(re^{iw})^{-n}}{2}} \le \frac{q^n + q^{-n}}{2}.
\]
By hypothesis, for any $z \in E_p$, the transform   $\IntWidth z + \IntCenter$ lies inside $E_q$, and this gives 
\[
\abs{T_n(\IntWidth z + \IntCenter)} \le \frac{1}{2}(q^{n} + q^{-n}).
\]
Thus we may use the bound $M(p) = \frac{q^{n} + q^{-n}}{2}$, which gives $\coeff_k \le \frac{q^n + q^{-n}}{p^k}$.
\end{proof}

\begin{lemma} \label{Lemma: Ellipse Intersection}
Let $E_q$ be the smallest Bernstein Ellipse that contains $\IntWidth E_p + \IntCenter$. If $\IntCenter = 0$, then  $q - q^{-1} = \IntWidth(p - p^{-1})$. If $\IntCenter = 1 - \IntWidth$, then $q + q^{-1} = \IntWidth(p + p^{-1}) + 2\IntCenter$.
\end{lemma}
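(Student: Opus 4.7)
First I would recall the relevant geometry: $E_q$ is the axis-aligned ellipse centered at $0$ with foci at $\pm 1$, semi-major axis $a_q=\tfrac12(q+q^{-1})$, and semi-minor axis $b_q=\tfrac12(q-q^{-1})$; writing $a_p = \tfrac12(p+p^{-1})$ and $b_p=\tfrac12(p-p^{-1})$, the set $\IntWidth E_p + \IntCenter$ is the axis-aligned ellipse centered at $\IntCenter$ with semi-axes $\IntWidth a_p$, $\IntWidth b_p$ and foci at $\IntCenter\pm \IntWidth$. Since $(\IntWidth a_p)^2-(\IntWidth b_p)^2 = \IntWidth^2\neq 1$, it is not confocal with $E_q$, so I would treat the two cases separately.

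In the $\IntCenter=0$ case both ellipses are centered at the origin. I would parametrize $\IntWidth E_p$ as $(\IntWidth a_p\cos\theta,\IntWidth b_p\sin\theta)$ and substitute into the implicit equation of $E_q$, obtaining an expression affine in $\cos^2\theta$ whose maximum over $\theta$ occurs either on the major or on the minor axis. This reduces the containment $\IntWidth E_p\subseteq E_q$ to the two inequalities $a_q\geq \IntWidth a_p$ and $b_q\geq \IntWidth b_p$. Using $a_q^2-b_q^2=1$ and $a_p^2-b_p^2=1$, the minor-axis equality $b_q = \IntWidth b_p$ automatically yields $a_q^2 = 1+\IntWidth^2 b_p^2 > \IntWidth^2(1+b_p^2) = \IntWidth^2 a_p^2$ (using $\IntWidth<1$), so the binding constraint is on the minor axis and the smallest $q$ satisfies $q-q^{-1} = \IntWidth(p-p^{-1})$.

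For $\IntCenter = 1-\IntWidth$, one focus of $\IntWidth E_p + \IntCenter$ coincides with the right focus of $E_q$ at $+1$, which I would exploit via the focal-sum characterization: a point $z$ belongs to $\IntWidth E_p+\IntCenter$ iff $|z-1|+|z-1+2\IntWidth|\leq \IntWidth(p+p^{-1})$, and applying the triangle inequality $|z+1| \leq |z-1+2\IntWidth| + 2(1-\IntWidth)$ (valid since $1-\IntWidth>0$) gives $|z-1|+|z+1|\leq \IntWidth(p+p^{-1}) + 2(1-\IntWidth) = \IntWidth(p+p^{-1})+2\IntCenter$, so any $E_q$ with $q+q^{-1}\geq \IntWidth(p+p^{-1})+2\IntCenter$ contains $\IntWidth E_p+\IntCenter$. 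The only remaining step, and in my view the only nontrivial one, is to verify tightness so that $q$ cannot be shrunk further; I would do this by evaluating at the rightmost real point $z^\ast = \IntCenter + \IntWidth a_p$ of the translated ellipse, where $z^\ast - 1 + 2\IntWidth = \IntWidth(a_p+1)$ is positive and real, so the triangle inequality becomes an equality and a direct computation gives $|z^\ast - 1| + |z^\ast + 1| = \IntWidth(p+p^{-1}) + 2\IntCenter$, forcing the smallest $q$ to satisfy the claimed identity. I expect the main obstacle to be the bookkeeping that confirms the signs and the equality cases (using $0<\IntWidth<1$ and $a_p>1$), which is routine.
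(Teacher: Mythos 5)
Your proposal is correct and rests on the same two key ideas as the paper: for $\IntCenter=0$, reducing the containment condition to a comparison along the minor and major axes (with the relation $a^2-b^2=1$ forcing the minor axis to be the binding constraint), and for $\IntCenter=1-\IntWidth$, exploiting the shared focus at $+1$ together with the focal-sum characterization and the triangle inequality. Your write-up of the $\IntCenter=1-\IntWidth$ case, which applies $\lvert z+1\rvert \le \lvert z-1+2\IntWidth\rvert + 2(1-\IntWidth)$ directly to get containment rather than arguing that intersection points must lie on the real axis, is a somewhat cleaner route to the same conclusion.
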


This is equivalent to saying that $E_q$ intersects $\IntWidth E_p + \IntCenter$ on the $y$-axis if $\IntCenter = 0$ and on the $x$-axis if $\IntCenter = 1 - \IntWidth$. A full proof is given in \ref{Proof: Ellipse Intersection}.

\begin{lemma} \label{Lemma: Tau Inf Bound}
If $E_q$ is the smallest Bernstein ellipse that contains $\IntWidth E_p + \IntCenter$, then $\tau_{\IntWidth,\IntCenter} \le \inf_{p>1} \frac{\log{(q})}{\log{(p)}}$.
\end{lemma}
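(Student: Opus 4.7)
The plan is to combine the coefficient bound from Lemma~\ref{Lemma: Coeff Bound from Ellipse} with the definition of $\tau_{\IntWidth,\IntCenter}$ essentially by taking logarithms. Fix any $p>1$, and let $q$ be the half-sum of the semiaxes of the smallest Bernstein ellipse containing $\IntWidth E_p + \IntCenter$, so that Lemma~\ref{Lemma: Coeff Bound from Ellipse} applies. Writing $T_n(\IntWidth x + \IntCenter) = \sum_{k=0}^n \coeff_k T_k(x)$, we already have the pointwise bound $|\coeff_k| \le (q^n + q^{-n})/p^k$. The only work left is to determine, for a given ratio $\gamma$, how fast $\coeff_{\lceil n\gamma\rceil}$ decays as $n\to\infty$.

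First I would substitute $k = \lceil n\gamma\rceil$ into the coefficient bound and simplify to
\[
|\coeff_{\lceil n\gamma\rceil}| \;\le\; \frac{q^n + q^{-n}}{p^{\lceil n\gamma\rceil}} \;\le\; \frac{2 q^n}{p^{n\gamma}} \;=\; 2\left(\frac{q}{p^{\gamma}}\right)^{\!n}.
\]
This tends to zero as $n\to\infty$ precisely when $p^\gamma > q$, i.e.\ when $\gamma > \log(q)/\log(p)$. Therefore $g(\gamma,n)\to 0$ for every $\gamma > \log(q)/\log(p)$, which by the definition of $\tau_{\IntWidth,\IntCenter}$ gives $\tau_{\IntWidth,\IntCenter} \le \log(q)/\log(p)$. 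Since $p>1$ was arbitrary and $q$ depends on $p$ (through the ``smallest containing ellipse'' construction), taking the infimum over $p$ yields the claimed bound $\tau_{\IntWidth,\IntCenter} \le \inf_{p>1} \log(q)/\log(p)$.

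The only subtlety I anticipate is that Definition~\ref{def: Tau} writes $\tau_{\IntWidth,\IntCenter}$ as a $\min$ rather than an $\inf$; I would treat this as an infimum (or remark that, as the condition on $\gamma$ is an open tail condition, the value is the infimum of the corresponding $\gamma$'s). I would also note briefly that the ceiling in $\lceil n\gamma\rceil$ is harmless because the extra factor $p^{\lceil n\gamma\rceil - n\gamma} \ge 1$ only improves the bound. Beyond these bookkeeping points, the proof is essentially a one-line computation once Lemma~\ref{Lemma: Coeff Bound from Ellipse} is invoked, so there is no serious obstacle; the substantive work has already been done in establishing the coefficient decay from the Bernstein ellipse containment.
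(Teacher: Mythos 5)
Your proof is correct and follows essentially the same route as the paper: invoke Lemma~\ref{Lemma: Coeff Bound from Ellipse}, set $k=\ceil{n\gamma}$, and observe that the bound tends to zero precisely when $p^\gamma>q$, then take the infimum over $p$. The additional remarks about the ceiling and the min-versus-inf in Definition~\ref{def: Tau} are reasonable bookkeeping that the paper leaves implicit.
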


\begin{proof}
Lemma~\ref{Lemma: Coeff Bound from Ellipse} implies that $\coeff_{\ceil{n\gamma}} \le \frac{q^n+q^{-n}}{p^{\ceil{n\gamma}}}$. This goes to $0$ if and only if $p^{\gamma} > q$. The result follows.
\end{proof}

\begin{theorem} \label{thm: Tau Bound}
$\tau_{\IntWidth, 0} \le \IntWidth$ and $\tau_{\IntWidth, 1 - \IntWidth} \le \sqrt{\IntWidth}$.
\end{theorem}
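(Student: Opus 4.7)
The plan is to apply Lemma~\ref{Lemma: Tau Inf Bound}, which reduces the proof to producing, for each case, a sequence of values $p > 1$ for which $\frac{\log q(p)}{\log p}$ is no larger than the claimed bound, where $q = q(p)$ is implicitly determined by the condition that $E_{q}$ be the smallest Bernstein ellipse containing $\alpha E_p + \beta$. Lemma~\ref{Lemma: Ellipse Intersection} supplies an explicit algebraic relation for $q(p)$ in both cases, so the proof reduces to a short asymptotic analysis. I would take the limit $p \to 1^+$ in both cases, since that is where the ratio $\log q / \log p$ is cleanest (both numerator and denominator vanish, and the relations linearize nicely).

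For $\beta = 0$, Lemma~\ref{Lemma: Ellipse Intersection} gives $q - q^{-1} = \alpha(p - p^{-1})$. Writing $p = 1+\epsilon$ and $q(p) = 1 + \delta(\epsilon)$ and Taylor-expanding, one checks that $p - p^{-1} = 2\epsilon + O(\epsilon^2)$ and $q - q^{-1} = 2\delta + O(\delta^2)$, so the relation forces $\delta = \alpha\, \epsilon + O(\epsilon^2)$. Using $\log(1 + x) = x + O(x^2)$ gives
\[
\frac{\log q(p)}{\log p} = \frac{\alpha \epsilon + O(\epsilon^2)}{\epsilon + O(\epsilon^2)} \longrightarrow \alpha \quad \text{as } \epsilon \to 0^+,
\]
which combined with Lemma~\ref{Lemma: Tau Inf Bound} yields $\tau_{\alpha,0} \le \alpha$.

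For $\beta = 1-\alpha$, Lemma~\ref{Lemma: Ellipse Intersection} gives $q + q^{-1} = \alpha(p+p^{-1}) + 2(1-\alpha)$. With the same substitution $p = 1 + \epsilon$, $q = 1 + \delta$, I would expand $p + p^{-1} = 2 + \epsilon^2 + O(\epsilon^3)$ and $q + q^{-1} = 2 + \delta^2 + O(\delta^3)$, so that matching leading terms yields $\delta^2 = \alpha\, \epsilon^2 + O(\epsilon^3)$, i.e.\ $\delta = \sqrt{\alpha}\,\epsilon + O(\epsilon^2)$. Then
\[
\frac{\log q(p)}{\log p} = \frac{\sqrt{\alpha}\,\epsilon + O(\epsilon^2)}{\epsilon + O(\epsilon^2)} \longrightarrow \sqrt{\alpha} \quad \text{as } \epsilon \to 0^+,
\]
giving $\tau_{\alpha, 1-\alpha} \le \sqrt{\alpha}$.

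The only subtlety, not a true obstacle but worth mentioning, is making the implicit definition of $q(p)$ rigorous near $p = 1$. The maps $q \mapsto q - q^{-1}$ and $q \mapsto q + q^{-1}$ are strictly monotone on $(1,\infty)$ with nonzero derivatives at $q = 1$, so the implicit function theorem guarantees that $q(p)$ exists, is smooth, and tends to $1$ as $p \to 1^+$ in both cases; this justifies the Taylor expansions. Once that is in place, the rest of the argument is bookkeeping of the leading-order asymptotics.
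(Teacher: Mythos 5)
Your proof is correct and takes essentially the same route as the paper: both apply Lemma~\ref{Lemma: Tau Inf Bound}, use Lemma~\ref{Lemma: Ellipse Intersection} to relate $q$ to $p$, and compute the limit of $\log q/\log p$ as $p\to 1^+$. The paper parameterizes by an auxiliary variable $t$ equal to the relevant semi-axis length and takes $t\to 0$ (resp.\ $t\to 1$), whereas you substitute $p=1+\epsilon$, $q=1+\delta$ directly and Taylor-expand, but this is only a cosmetic difference in bookkeeping.
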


This follows from applying the result of Lemma~\ref{Lemma: Ellipse Intersection} to the infimum from Lemma~\ref{Lemma: Tau Inf Bound}. Full details are given in \ref{Proof: Tau Bound}.

\begin{corollary}\label{cor:tau-bound}
$\tau_{\IntWidth} \le \sqrt{\IntWidth}$.
\end{corollary}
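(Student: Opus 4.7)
The plan is to deduce the corollary from a monotonicity property of the ``smallest enclosing Bernstein ellipse'' in $\IntCenter$, combined with the endpoint bound $\tau_{\IntWidth,1-\IntWidth}\le \sqrt{\IntWidth}$ of Theorem~\ref{thm: Tau Bound}. Since $\tau_{\IntWidth}=\sup_{\IntCenter\in [0,1-\IntWidth]}\tau_{\IntWidth,\IntCenter}$, it suffices to show $\tau_{\IntWidth,\IntCenter}\le \tau_{\IntWidth,1-\IntWidth}$ for every such $\IntCenter$. By Lemma~\ref{Lemma: Tau Inf Bound}, this in turn follows if, for every fixed $p>1$, the minimal $q=q(\IntCenter,p)$ with $\IntWidth E_p+\IntCenter\subseteq E_q$ is non-decreasing in $\IntCenter$ on $[0,1-\IntWidth]$.

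To establish this monotonicity I would work with the sum-of-distances function $g(x,y)=\sqrt{(x-1)^2+y^2}+\sqrt{(x+1)^2+y^2}$, whose level sets are exactly the Bernstein ellipses, so that $q+q^{-1}=M(\IntCenter)\coloneqq \max_{z\in \IntWidth E_p+\IntCenter}g(z)$. Parameterize the boundary of $\IntWidth E_p+\IntCenter$ by $z(\theta)=\IntCenter+a\cos\theta+ib\sin\theta$, where $a=\tfrac{\IntWidth}{2}(p+p^{-1})$ and $b=\tfrac{\IntWidth}{2}(p-p^{-1})$, and set $f(\theta,\IntCenter)=g(z(\theta))$. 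This function is even in $\theta$, and a short calculation exploiting the fact that, for each fixed $v\ge 0$, the map $u\mapsto \sqrt{(u-1)^2+v^2}+\sqrt{(u+1)^2+v^2}$ is even in $u$ and non-decreasing in $|u|$ shows that $f(\pi-\theta,\IntCenter)\le f(\theta,\IntCenter)$ for $\theta\in[0,\pi/2]$ whenever $\IntCenter\ge 0$. Consequently any maximizer $\theta^{*}(\IntCenter)$ of $f(\cdot,\IntCenter)$ lies in $[0,\pi/2]$, and in particular $\cos\theta^{*}\ge 0$.

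At such a maximizer the outward normal to $\IntWidth E_p+\IntCenter$ at $z^{*}=z(\theta^{*})$ is proportional to $(\cos\theta^{*}/a,\,\sin\theta^{*}/b)$, which has non-negative $x$-component; the Lagrange multiplier condition then forces $\nabla g(z^{*})$ to point in this same outward direction, so $\partial_x g(z^{*})\ge 0$. Since $\partial_{\IntCenter} f(\theta,\IntCenter)=\partial_x g(z(\theta))$, an application of Danskin's theorem to $M(\IntCenter)=\max_{\theta\in[0,2\pi)}f(\theta,\IntCenter)$ on the compact parameter set shows that the upper right Dini derivative of $M$ equals $\max_{\theta^{*}}\partial_x g(z^{*})\ge 0$, so $M$ is non-decreasing on $[0,1-\IntWidth]$. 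Therefore $q(\IntCenter,p)\le q(1-\IntWidth,p)$, and combining with Lemma~\ref{Lemma: Tau Inf Bound} and Theorem~\ref{thm: Tau Bound} yields $\tau_{\IntWidth,\IntCenter}\le \sqrt{\IntWidth}$, hence the corollary.

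The main obstacle is making this envelope step rigorous in the presence of possibly multiple maximizers or lack of smoothness of $\theta^{*}(\IntCenter)$, but this is handled by the standard form of Danskin's theorem for maxima of smooth families over a compact set; the essential geometric fact driving the argument is that every maximizer lies in the first quadrant on the ellipse, so that the outward normal there has non-negative $x$-component.
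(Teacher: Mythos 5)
Your proposal follows the same high-level strategy the paper uses: show that, for fixed $\IntWidth$, the minimal enclosing Bernstein ellipse parameter $q$ is non-decreasing in $\IntCenter$, conclude that $\tau_{\IntWidth,\IntCenter}$ is largest at the endpoint $\IntCenter=1-\IntWidth$, and then invoke the bound $\tau_{\IntWidth,1-\IntWidth}\le\sqrt\IntWidth$ from Theorem~\ref{thm: Tau Bound}. The difference is that the paper simply asserts the monotonicity of $q$ in $\IntCenter$ in one sentence without justification, whereas you supply an actual argument for it: writing $q+q^{-1}$ as the maximum of the foci-distance-sum function $g$ over the translated ellipse, observing via the symmetry $f(\pi-\theta,\IntCenter)\le f(\theta,\IntCenter)$ that every maximizer sits where $\cos\theta^{*}\ge 0$, reading off $\partial_x g(z^{*})\ge 0$ from the outward-normal Lagrange condition, and passing to the envelope $M(\IntCenter)$ via Danskin to get $M'\ge 0$. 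That chain is correct (the key pointwise fact, that $u\mapsto\sqrt{(u-1)^2+v^2}+\sqrt{(u+1)^2+v^2}$ is even and non-decreasing in $|u|$, checks out, and the Lagrange/Danskin steps are standard), and it genuinely closes a gap the paper leaves open. One minor quibble: you frame the target as $\tau_{\IntWidth,\IntCenter}\le\tau_{\IntWidth,1-\IntWidth}$, but what your argument through Lemma~\ref{Lemma: Tau Inf Bound} actually delivers is $\tau_{\IntWidth,\IntCenter}\le\inf_{p>1}\log q(1-\IntWidth,p)/\log p$, which is only an upper bound for $\tau_{\IntWidth,1-\IntWidth}$; this is harmless because the proof of Theorem~\ref{thm: Tau Bound} bounds precisely that infimum by $\sqrt\IntWidth$, so the corollary still follows, but the intermediate claim as stated is slightly stronger than what is shown or needed.
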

\begin{proof}
For fixed $\IntWidth$, as $\IntCenter$ increases,  $q$ and, therefore, $\tau_{\IntWidth,\IntCenter}$ increases. Thus $\tau_{\IntWidth, \IntCenter}$ is maximized at $\IntCenter = 1-\IntWidth$, so $\tau_{\IntWidth} \le \sqrt{\IntWidth}$.
\end{proof}

It is currently unknown how to find an explicit value of $q$ in general, and so a general solution for $\tau_{\IntWidth,\IntCenter}$ is unknown.

\subsection{Tau numerical testing}
We can estimate the value of $\tau$ numerically by computing the numerical degree of $T_n(\IntWidth x + \IntCenter)$. If $C_{i,j}(\IntWidth, \IntCenter)$ is the Chebyshev transformation matrix, the new degree is $D = \max_i : \abs{C_{i,n}(\IntWidth, \IntCenter)} > \varepsilon_{machine}$. For large $n$, we can then approximate $\tau_{\IntWidth,\IntCenter} \approx \frac{D}{n}$. The results of doing this for $\alpha =0.5$ are plotted in Figure~\ref{fig:Tau Convergence Plot}.  These computations and others  motivate the following conjecture for the value of  $\tau_{\IntWidth, \IntCenter}$.

\begin{conjecture} \label{Conj:Actual Tau}
$$\tau_{\IntWidth,\IntCenter} = \frac{1}{(\frac{1}{\IntWidth} - \frac{1}{\sqrt{\IntWidth}})\sqrt{1-(\frac{\IntCenter}{1-\IntWidth})^2} + \frac{1}{\sqrt{\IntWidth}}}$$
\end{conjecture}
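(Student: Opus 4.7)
The plan is to combine the upper bound in Lemma~\ref{Lemma: Tau Inf Bound} with a matching lower bound, and then evaluate the resulting extremum in closed form to match the conjectured expression. The first reduction is to show that the infimum in Lemma~\ref{Lemma: Tau Inf Bound} is attained as $p \to 1^+$. Since $q(1) = 1$ (at $p = 1$ the ellipse $E_p$ degenerates to the segment $[-1,1]$, which contains $\alpha[-1,1] + \beta = [\beta - \alpha, \beta + \alpha]$), the ratio $\log q(p)/\log p$ has indeterminate form $0/0$ and, by L'H\^opital's rule, equals $q'(1)$. One verifies that the two special cases already established in Theorem~\ref{thm: Tau Bound} indeed arise from this derivative at $p = 1$, so the calculation of $\tau_{\alpha,\beta}$ reduces to computing $q'(1)$ by implicitly differentiating the condition that $E_q$ is the smallest Bernstein ellipse containing $\alpha E_p + \beta$.

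To set up the tangency condition, I would use the focal characterization $E_q = \{z : |z - 1| + |z + 1| = q + q^{-1}\}$ together with the parametrization $z_0 = \beta + \alpha a_p \cos\theta + i\alpha b_p \sin\theta$ of points on $\alpha E_p + \beta$, where $a_p = (p+p^{-1})/2$ and $b_p = (p-p^{-1})/2$. The minimal enclosing $E_q$ is tangent to $\alpha E_p + \beta$ at a unique point, which maximizes $|z-1| + |z+1|$ over $\alpha E_p + \beta$. Expanding $a_p, b_p$ and $a_q, b_q$ to leading order in $p - 1$ and $q - 1$ and substituting into the tangency equation yields an explicit expression of the form $q'(1) = F(\alpha, \beta, \theta^*)$, where $\theta^*$ is the maximizing angle determined by a transcendental critical-point equation. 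The substitution $\beta/(1-\alpha) = \sin\phi$ implicit in the conjecture should then, if the conjecture is correct, arise naturally from solving this critical-point equation and rewriting $F$ accordingly.

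To promote Lemma~\ref{Lemma: Tau Inf Bound} from an upper bound to an equality, I would establish a matching asymptotic lower bound directly on the Chebyshev coefficients of $T_n(\alpha x + \beta)$. Use the integral representation
\[
a_{k,n} = \frac{2}{\pi}\int_0^\pi \cos(n\psi(\theta))\cos(k\theta)\,d\theta, \qquad \psi(\theta) = \arccos(\alpha\cos\theta + \beta),
\]
and perform a stationary phase analysis: for $\gamma$ strictly below the claimed value of $\tau_{\alpha,\beta}$, the phase $n\psi(\theta) \pm k\theta$ admits a real stationary point in $(0,\pi)$, so $a_{\lceil n\gamma\rceil, n}$ decays only polynomially (not exponentially) in $n$, giving the matching lower bound.

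The main obstacle is extracting the closed form for $q'(1)$ and verifying it coincides with the specific rational expression in the conjecture. The critical-point equation for $\theta^*$ is equivalent to a polynomial relation of degree four in $\cos\theta$ (or equivalently in $\tau^2$), and this polynomial lacks an obvious elegant solution outside the two endpoint cases $\beta = 0$ and $\beta = 1 - \alpha$ treated in Theorem~\ref{thm: Tau Bound}, whose symmetry forces the tangent point onto a coordinate axis. Producing a clean form like the conjectured one would likely require either an integrable change of variables that simultaneously uniformizes both ellipses (perhaps exploiting the Joukowski map on each), or a parametric ansatz suggested by the endpoint behavior. It is also worth checking numerically with high precision whether the conjecture is exactly right at intermediate $\beta$, since the derived $q'(1)$ and the conjectured expression manifestly agree at both endpoints and may only approximate one another in between.
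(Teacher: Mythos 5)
The statement you are trying to prove is labeled a \emph{conjecture} in the paper and has no proof there; immediately before it, the paper states ``It is currently unknown how to find an explicit value of $q$ in general, and so a general solution for $\tau_{\alpha,\beta}$ is unknown,'' and supports the formula only with numerical evidence (Figure~\ref{fig:Tau Convergence Plot}). So there is no paper proof for your proposal to be compared against.

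Your sketch is a plausible research plan, and the $p \to 1^+$ limit you propose does reproduce the two endpoint cases of Theorem~\ref{thm: Tau Bound} (the appendix proofs of those cases take $t \to 0$ or $\varepsilon \to 0$, which amounts to $p \to 1^+$). But even as a plan it has gaps beyond the one you flag. First, Lemma~\ref{Lemma: Tau Inf Bound} gives $\tau_{\alpha,\beta} \le \inf_{p>1} \log q / \log p$; you assert the infimum is attained in the limit $p\to 1^+$ and hence equals $q'(1)$, but the endpoint proofs in the paper only use the trivial $\inf \le \lim$ direction, and you do not argue the monotonicity needed to identify the infimum with the limit. Second, your proposed lower bound via stationary phase does not match the paper's definition of $\tau$: a real stationary point of the phase makes $a_{\lceil n\gamma\rceil,n}$ decay like $O(n^{-1/2})$, which still tends to zero, so under the definition ``$\tau_{\alpha,\beta} = \min\{\gamma : \lim_{n\to\infty} g(\gamma,n)=0\}$'' this does not exhibit non-vanishing coefficients for $\gamma$ below the conjectured value; you would need to argue either that the infimum of such $\gamma$ cannot be smaller, or reinterpret $\tau$ in terms of exponential versus polynomial decay (which is what the numerical-degree heuristic really measures). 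Finally, as you acknowledge, the closed form for $q'(1)$ at intermediate $\beta$ is the crux and remains unsolved, which is precisely why the paper leaves this as a conjecture. Your closing suggestion to verify the formula at high precision before attempting the symbolic computation is sensible and consistent with the paper's own stance.
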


Note that for fixed $\alpha$, the graph of $\frac{1}{\tau_{\IntWidth,\IntCenter}}$ is conjectured to be half an ellipse.

\begin{figure}[ht]
\centering
\includegraphics{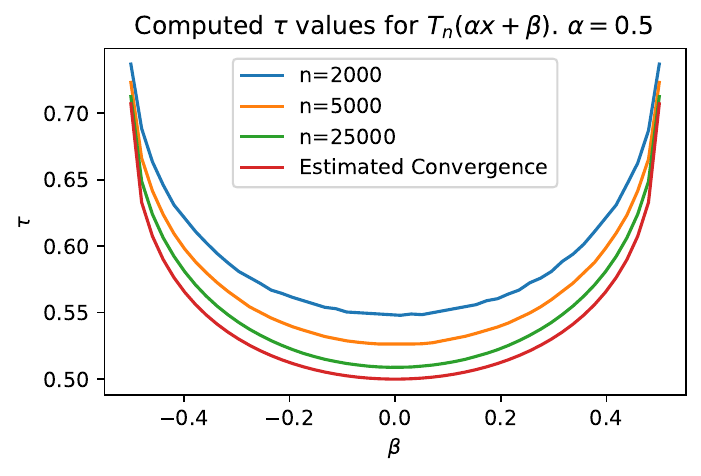}
\caption{The computed values of $\tau_{0.5,\IntCenter}$ for large degree $n$ along with the values from Conjecture~\ref{Conj:Actual Tau}, which the approximations seem to converge to.}
\label{fig:Tau Convergence Plot}
\end{figure}

\subsection{Arithmetic Complexity}\label{sec:temporal-complexity}
Now that we have results about how the degree changes as we subdivide, we can talk about the arithmetic complexity of the Chebyshev polynomial solver as a whole.

Here we just analyze the core algorithm of the repeated linear reduction method and polynomial transformations, with the understanding that any other reduction and elimination checks (potentially beyond what we describe in this paper), are only added to speed things up. We do this because their contribution is difficult to analyze because it is hard to say how often the elimination checks will throw out an interval, although that would be interesting future work.

\begin{theorem} \label{thm:TemporalComplexity}
Finding the common zeros of a system of $n$ polynomials of degree $d-1$ in each dimension has arithmetic complexity of $O(d^2\log{d})$ for $n=1$ and $O(nd^{2n})$ for $n>1$.
\end{theorem}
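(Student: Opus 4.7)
The plan is to set up and solve a recurrence in the degree $d$ by accounting for (i)~the per-call arithmetic cost of one invocation of \texttt{Cheb\_Solve}, (ii)~the branching factor $2^n$ introduced by subdivision, and (iii)~the shrinkage of the degree of each polynomial caused by subdivision, which is quantified by $\tau$ from Corollary~\ref{cor:tau-bound}. The master theorem will then give the stated bounds, with $n=1$ and $n>1$ corresponding to different cases.

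First, I would bound the per-call cost. All of the work done in a single recursive call before any subdivision or recursive descent is dominated by the Chebyshev transformation step described in Section~\ref{sec:transformation}: for each of the $n$ input polynomials we apply the matrix $C$ along each of the $n$ coordinates, and each such one-dimensional transform of a $d^{n-1}$-indexed family of $d$-vectors costs $O(d^{n+1})$, giving a total of $O(nd^{n+1})$ per polynomial and $O(n^2 d^{n+1})$ for the whole system. The exclusion checks (\texttt{ExclusionChecks}) and reduction methods (\texttt{ReductionMethod}) are cheaper: each scan of coefficients costs $O(nd^n)$, and the only matrix operation is an $n\times n$ inverse or adjugate computed via Cramer's rule, costing $O(n^3)$. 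Since the reduction loop runs only as long as it shrinks the interval by at least the fixed constant factor in Line~5 of Algorithm~\ref{alg: ChebSolve}, the total reduction work before a subdivision is absorbed by the transformation cost.

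Second, I would analyze the branching of the recursion. When the algorithm subdivides an interval into $2^n$ subintervals of half-width $\alpha = 1/2$, the preceding Chebyshev transformation replaces each polynomial of per-coordinate degree $d-1$ by one of per-coordinate degree at most $\lceil \tau_{1/2}(d-1)\rceil$, and Corollary~\ref{cor:tau-bound} gives $\tau_{1/2}\le 1/\sqrt{2}$. Since reductions never increase the degree, and the base case of Section~\ref{sec:recursion}\P\hyperlink{par:base-case}{Base Case} is reached once the per-coordinate degree becomes $O(1)$, the depth of the subdivision tree is $O(\log_{\sqrt 2} d) = O(\log d)$. Thus if $T(d)$ denotes the total cost, the recursion satisfies
\begin{equation*}
T(d) \le 2^{n}\,T\!\left(d/\sqrt{2}\right) + O(n^{2}d^{\,n+1}),
\end{equation*}
with base case $T(O(1)) = O(1)$.

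Third, I would apply the master theorem to the recurrence, with $a=2^n$, $b=\sqrt 2$, and $\log_b a = 2n$. For $n=1$ we have $a=2$, $\log_b a = 2$, and $f(d)=O(d^2) = \Theta(d^{\log_b a})$, so case~2 gives $T(d) = O(d^{2}\log d)$. For $n>1$ we have $\log_b a = 2n$ while $f(d) = O(n^{2}d^{\,n+1})$ has degree $n+1<2n$, so case~1 gives $T(d) = O(d^{\,2n})$, with the $n$-dependence tracked by the base-case constant giving the stated $O(n d^{\,2n})$ bound. A self-contained verification of case~1 (summing the geometric series $\sum_k 2^{nk}\tau^{k(n+1)}d^{\,n+1} \sim 2^{k(n-1)/2}d^{\,n+1}$ against the last level $k\asymp 2\log_2 d$) avoids relying on the master theorem if desired.

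The main obstacle is making sure the recurrence is honest: the algorithm does not always subdivide into a clean $2^n$ pieces (it interleaves reduction steps, and reductions may shrink the interval by non-halving factors), and also the $\tau$ bound is only asymptotic (Definition~\ref{def: Tau}) with a $\lceil n\gamma\rceil$ ceiling. I would handle the first issue by observing that reduction steps only decrease the size of the search interval and do not increase degrees, so they can only make transformations cheaper in subsequent levels, and can be charged to the subdivision above them without changing the asymptotics. For the second issue, the geometric shrinkage guaranteed by $\tau_{1/2}\le 1/\sqrt{2}$ holds up to additive $O(1)$ error in the degree, which does not affect the total over the $O(\log d)$ levels. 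With those two observations in hand, the recurrence above is a valid upper bound and the master theorem delivers the claim.
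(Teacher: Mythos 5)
Your proof takes essentially the same approach as the paper: bound the per-step cost by $O(\mathrm{poly}(n)\,d^{n+1})$ dominated by the Chebyshev transformation, observe that subdivision multiplies the number of intervals by $2^n$ while the degree shrinks by $\tau \le 1/\sqrt{2}$ (Corollary~\ref{cor:tau-bound}), so that successive levels' total cost scales by $2^n\tau^{n+1} \le \sqrt{2}^{\,n-1}$, and sum over the $\sim 2\log_2 d$ levels --- you package this as a master-theorem recurrence while the paper writes out the geometric series explicitly, but the analysis is the same, and your remarks about charging interleaved reductions to the preceding subdivision and about the $\lceil n\gamma\rceil$ ceiling in the definition of $\tau$ tidy up details the paper leaves implicit. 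One loose end: you carefully count the per-call transformation cost as $O(n^2 d^{n+1})$ (all $n$ polynomials in all $n$ coordinates), which carries through the recurrence to $O(n^2 d^{2n})$ rather than the theorem's stated $O(n d^{2n})$; the paper uses $O(n d^{n+1})$ per call without justifying the missing factor of $n$, so this discrepancy is really a gap in the statement's $n$-coefficient rather than in your argument, and it does not affect the $d$-asymptotics either way.
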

\begin{proof}
The linear reduction method requires summing all the terms in the coefficients and solving an $n \times n$ system, with a complexity of  $O(nd^n + n^3) = O(nd^n)$. The cost of reapproximating all the polynomials systems is $O(nd^{n+1})$. Thus the complexity of one full step of the algorithm is $O(nd^{n+1})$.

We assume the worst case where the algorithm splits the interval in half at every step, keeping all the intervals until the degree is very small, at which point it zooms in on all the zeros. After the first subdivision step the algorithm always splits the interval exactly in half, so we use $\tau$ to mean $\tau_{0.5,0.5}$. The cost of the next step is then $n2^n(\tau d)^{n+1}$. The ratio of the cost of each step to the cost of the previous step is thus $\frac{n2^n(\tau d)^{n+1}}{nd^{n+1}} = 2^n\tau^{n+1} \le \sqrt{2}^{n-1}$, by Corollary~\ref{cor:tau-bound}. The number of steps required will be roughly $2\log_2{d}$. For $n=1$, each step has the same cost so the total complexity is $O(d^2\log{d})$. For $n > 1$, the complexity is dominated by the final step, so is $n d^{n+1}(\sqrt{2}^{n-1})^{2\log_2{d}} = nd^{2n}$. So for $n>1$ we have complexity $O(nd^{2n})$.

This is the complexity to get to get down to a bunch of small intervals of small degree, to the point where the linear check is now starting to zoom in. At this point, if the system has $R$ common zeros, we assume we have to zoom in on roughly $R$ different intervals down to the base case. Because of the quadratic convergence of zooming in on the zeros, this will take a constant amount of time for each zero. For our given system we will have at most $(d-1)^n$ zeros. Thus the total complexity as is given above.
\end{proof}

\subsection{Arithmetic Complexity, Numerical Results}\label{sec:numerical-temporal-complexity}

In general the complexity will be better than that shown above because $\tau$ is often less than $\frac{\sqrt{2}}{2}$. Intervals on the interior have significantly lower degree than those on the edge. This is shown in Table~\ref{tab:ChebDegrees}, where we show the degree of $T_{10000}(x)$ on four rounds of subdivision (with $\alpha = 0.5$). Note that the intervals near $0$ scale by roughly $\IntWidth$, as would be expected from  Theorem~\ref{thm: Tau Bound}.

\begin{table}[ht] 
    \centering
    \begin{tabular}{|c|c|c|c|c|c|c|c|}
        \hline
        \multicolumn{8}{|c|}{7173} \\
        \hline
        \multicolumn{4}{|c|}{2730} & \multicolumn{4}{|c|}{5104}\\
        \hline
        \multicolumn{2}{|c|}{1375} & \multicolumn{2}{|c|}{1469} & \multicolumn{2}{|c|}{1738} & \multicolumn{2}{|c|}{3633}\\
        \hline
        718 & 728 & 750 & 791 & 853 & 964 & 1194 & 2589\\
        \hline
    \end{tabular}
    \caption{The $i$th row of this table gives the degree when approximating $T_{10000}$ on the $2^{i-1}$ evenly spaced intervals on $[0,1]$.  The leftmost columns are the intervals closest to $0$, the rightmost those closest to $1$. The degrees on $[-1,0]$ are the same (in reversed order to make them symmetric about $0$).  The degree of the approximations near $0$ (the left side of this table) scale approximately like $\alpha = \frac12$, as would be expected from  Theorem~\ref{thm: Tau Bound}. The degree on the intervals nearest $1$ scale more like our worst-case bound of $\sqrt{\alpha}$.}
    \label{tab:ChebDegrees}
\end{table}

\begin{table}[ht]     \centering
    \begin{tabular}{|c|c|c|c|c|c|c|}
        \hline
        Dim & Step 2 & Step 3 & Step 4 & Step 5 & Step 6 \\
        \hline
        1 & 1. & 0.63246763 & 0.36532202 & 0.20318345 & 0.11092983 \\
        \hline
        2 & 1.41421356 & 0.9580105  & 0.56098545 & 0.31115873 & 0.16892276\\
        \hline
        3 & 2. & 1.45111635 & 0.86144459 & 0.47651401 & 0.25723376\\
        \hline
        4 & 2.82842712 & 2.19803297 & 1.32282715 & 0.72974201 & 0.39171281\\
        \hline
    \end{tabular}
    \caption{The relative complexities of the first steps of solver relative to the first step.}    \label{tab:ComplexityChange}
\end{table}

Because of this, the complexity should actually be much better than shown in \ref{thm:TemporalComplexity}.  Assuming the degrees actually drop as predicted by Conjecture~\ref{Conj:Actual Tau}, we get the following conjecture, which is supported by Table~\ref{tab:ComplexityChange}.

\begin{conjecture} \label{Conj:Actual Complexity}
For any dimension $n$, and sufficiently large degree $d$, for a system of $n$ degree-$d$ Chebyshev polynomials in $n$ variables, the cost of subdivision will be greatest on the second step, which will be $\sqrt{2}^{n-1}$ times the complexity of the first step.  The complexity will then decrease with each step at an increasing rate, becoming $50\%$ cheaper at each step in the limit. Thus the arithmetic complexity of the entire algorithm should be the complexity of the second step, which is $O(nd^{n+1}\sqrt{2}^{n-1})$.
\end{conjecture}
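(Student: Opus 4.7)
The plan is to derive a closed-form expression for $\text{cost}_k$, the cost of the $k$-th subdivision step, under Conjecture~\ref{Conj:Actual Tau}, and then analyze this expression as a function of $k$. At step $k$ the algorithm has partitioned $[-1,1]^n$ into an $n$-dimensional grid of $2^{(k-1)n}$ subintervals, each of width $2\alpha_k := 2^{2-k}$ in every coordinate. Conjecture~\ref{Conj:Actual Tau} asserts that on the subinterval centred at $(\beta_1,\ldots,\beta_n)$ the transformed polynomial has degree $\tau(\alpha_k,\beta_i)\,d$ in coordinate $i$. Because the Chebyshev transformation (Section~\ref{sec:transformation}) is applied coordinate-by-coordinate, the per-polynomial cost on one such subinterval is proportional to $\prod_i(\tau_i d)\cdot\sum_j(\tau_j d) = d^{n+1}\prod_i\tau_i\cdot\sum_j\tau_j$. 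Exploiting the tensor-product structure of the grid, the total cost separates into
\[
\text{cost}_k \;=\; \Theta\!\bigl(n^2 d^{n+1}\,S(\alpha_k)^{n-1}\,P(\alpha_k)\bigr),
\]
where $S(\alpha):=\sum_\beta \tau(\alpha,\beta)$ and $P(\alpha):=\sum_\beta \tau(\alpha,\beta)^2$ are sums over the $2^{k-1}$ one-dimensional subinterval centres.

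Next I would verify the claimed ratio at the second step directly. At $k=1$ we have $\alpha=1$ and trivially $\tau=1$, so $S(1)=P(1)=1$. At $k=2$ the two centres $\beta=\pm\tfrac12$ coincide with the edge case $|\beta|=1-\alpha$, so Theorem~\ref{thm: Tau Bound} gives $\tau=\sqrt{1/2}$; hence $S(\tfrac12)=\sqrt{2}$ and $P(\tfrac12)=1$, yielding $\text{cost}_2/\text{cost}_1 = (\sqrt{2})^{\,n-1}$. For the asymptotic behaviour I would substitute $\beta=(1-\alpha)\cos\theta$ and recognise $S$ and $P$ as Riemann sums of spacing $2\alpha$ in $\beta$. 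Using the small-$\alpha$ form $\tau\sim \alpha/(\sin\theta+\sqrt{\alpha})$ that follows from Conjecture~\ref{Conj:Actual Tau}, this gives
\[
S(\alpha)\approx\tfrac12\int_0^\pi\frac{\sin\theta}{\sin\theta+\sqrt{\alpha}}\,d\theta \dsand P(\alpha)\approx\tfrac{\alpha}{2}\int_0^\pi\frac{\sin\theta}{(\sin\theta+\sqrt{\alpha})^2}\,d\theta.
\]
A routine analysis then shows $S(\alpha)\to\pi/2$ and $P(\alpha)\sim(\alpha/2)\log(1/\alpha)$, the log factor coming from an integrable logarithmic singularity localised at $\theta\in\{0,\pi\}$. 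Substituting $\alpha_k=2^{-(k-1)}$ gives
\[
\frac{\text{cost}_{k+1}}{\text{cost}_k} \;\sim\; \frac{1}{2}\cdot\frac{k}{k-1}\qquad(k\to\infty),
\]
which is monotonically decreasing in $k$ and converges to $\tfrac12$, matching the ``$50\%$ cheaper in the limit'' claim and the ``decrease at an increasing rate'' qualitative claim.

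To close the argument I would combine these pieces. Since $\text{cost}_2/\text{cost}_1 = (\sqrt2)^{n-1}\ge 1$ while the ratios $\text{cost}_{k+1}/\text{cost}_k$ drop below $1$ for $k\ge 2$ and decrease thereafter, the cost profile peaks at $k=2$; the tail $\sum_{k\ge2}\text{cost}_k$ is comparable to its leading term $\text{cost}_2$, giving the overall complexity $\Theta\!\bigl(n d^{n+1}(\sqrt2)^{n-1}\bigr)$ after absorbing the ambient $n$-factors consistent with the simplified cost model used in Section~\ref{sec:temporal-complexity} and Theorem~\ref{thm:TemporalComplexity}.

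The main obstacle is twofold. First, the whole deduction is conditional on Conjecture~\ref{Conj:Actual Tau}, which is only empirically validated in Figure~\ref{fig:Tau Convergence Plot}; any unconditional proof would require a genuine upper bound on $\tau_{\alpha,\beta}$ that improves on Corollary~\ref{cor:tau-bound} and matches the conjectured elliptical profile, likely via a sharper Bernstein-ellipse geometry argument than Lemma~\ref{Lemma: Ellipse Intersection}. Second, the Riemann-sum estimates in the asymptotic analysis have to be made rigorous near $\theta\in\{0,\pi\}$, where the integrand of $P$ has a logarithmic singularity; a too-crude error bound washes out precisely the $\alpha\log(1/\alpha)$ term that governs the ratio's limit. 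A secondary difficulty is proving monotonicity of $\text{cost}_{k+1}/\text{cost}_k$ for \emph{all} $k\ge 2$ rather than only asymptotically, which I expect will require explicit exact evaluation at small $k$ stitched together with a uniform version of the asymptotic estimate.
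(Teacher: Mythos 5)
The paper does not actually prove Conjecture~\ref{Conj:Actual Complexity}; it is stated as a conjecture and is justified only by the numerical data in Table~\ref{tab:ComplexityChange}, with a single sentence pointing to Conjecture~\ref{Conj:Actual Tau} as the ingredient that would make the numbers come out this way. So there is no proof in the paper for your argument to match; what you have produced is a heuristic derivation that fills in the reasoning the authors left implicit, and it is a genuinely useful expansion.

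Your cost model $\text{cost}_k\propto S(\alpha_k)^{n-1}P(\alpha_k)$, with $S=\sum_\beta\tau$ and $P=\sum_\beta\tau^2$ over the one-dimensional centres, appears to be exactly what underlies Table~\ref{tab:ComplexityChange}: I checked, for instance, that $P(1/4)=2(\tau(1/4,1/4)^2+\tau(1/4,3/4)^2)\approx 0.632$ reproduces the dimension-$1$, step-$3$ entry, and $S(1/4)P(1/4)/(S(1/2)P(1/2))\approx 0.677$ reproduces the dimension-$2$ step-$3$-to-step-$2$ ratio $0.958/1.414$. The tensor-product separation $\sum_{\boldsymbol\beta}\bigl(\prod_i\tau_i\bigr)\bigl(\sum_j\tau_j\bigr)=nS^{n-1}P$ is also correct. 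Your step-$2$ ratio $(\sqrt2)^{n-1}$ and the explanation of why the paper's uniform worst-case bound $2^n\tau^{n+1}\le\sqrt2^{n-1}$ becomes an equality at step $2$ (all $2^n$ children are edge intervals) are right, and your identification of the three obstacles — dependence on the unproven Conjecture~\ref{Conj:Actual Tau}, the delicate Riemann-sum control near $\theta\in\{0,\pi\}$, and nonasymptotic monotonicity — is accurate and honest.

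One caution on the asymptotic formula: writing $P(\alpha)\sim(\alpha/2)\log(1/\alpha)$ and then concluding $\text{cost}_{k+1}/\text{cost}_k\sim\tfrac12\cdot\tfrac{k}{k-1}$ discards an $O(1)$ additive constant, $P(\alpha)\approx\tfrac\alpha2(\log\tfrac1\alpha+C)$, coming from the interior of the integral. Retaining it gives a ratio $\tfrac12\cdot\tfrac{(k-1)\log2+C}{(k-2)\log2+C}$, which is still decreasing to $\tfrac12$, so every qualitative claim in your argument survives; but the explicit formula $\tfrac12\cdot\tfrac{k}{k-1}$ is only correct if $C=\log 2$, which the table entries suggest is not the case (for $n=1$, $\alpha=1/32$, the table's $0.111$ needs $C\approx 3.6$). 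This does not affect the summability estimate or the conclusion $\Theta(\text{cost}_2)$; it only means the precise rate at which the ratio approaches $1/2$ should not be quoted as $\tfrac{k}{k-1}$. Overall your write-up is a faithful and more explicit account of the heuristic the authors gesture at, with the limitations correctly flagged.
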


This conjectured complexity seems to more closely match the results of our experiments in Figure \ref{fig:ChebPolySolveTimes}. In one dimension our solver has a provable complexity no worse than $O(d^2 \log{d})$, but is most likely $O(d^2)$. As seen in the figure, up until degree about 10,000, the complexity is almost linear, that is $O(d)$. This makes sense, because for small degree we  expect the base case to dominate, in which case it would be $O(R)$, where $R$ is the number of zeros. In our dense examples, this becomes $O(d^n)$. 

In summary, Theorem~\ref{thm:TemporalComplexity} guarantees a complexity no worse than $O(d^2 log(d))$ for $n=1$ and $O(d^{2n})$ for $n>1$, but we suspect it is really $O(d^{n+1})$.  For smaller degrees and dimensions, we expect that it initially grows as $O(R) \le O(d^n)$.

For comparison, a common alternative method for solving a single one-dimensional Chebyshev polynomial is to find the eigenvalues of the colleague matrix, which has complexity $O(d^3)$.  Recently eigenvalue-based solvers have been developed in one dimension that are $O(d^2)$ \cite{serkh2021provably}.
In higher dimensions, the eigenvalue-based methods use the Macaulay matrix, and, as shown in \cite{OurMacaulayPaper}, these methods have a arithmetic complexity that is at best $O(d^{3n})$.

\section{Stability Analysis} \label{section: stability}
We do not present a rigorous treatment of the stability of this algorithm, but we can give reasoning and numerical evidence that suggest this algorithm can find roots with an extremely high level of accuracy. However, it is possible that roots could be missed due to numerical instability as discussed here. That could potentially be fixed in future work by proving stability of the construction of the Chebyshev Transformation Matrix and using interval arithmetic in the implementation.

There are two principle steps in the algorithm at which we have to worry about error being introduced: first, transforming polynomials to new intervals and, second, zooming in to new intervals. Both of these are very well behaved in numerical tests, as discussed below.

\subsection{Error of polynomial transformation}

Transforming the polynomials to a new interval only requires a tensor multiplication of the Chebyshev transformation matrix by the polynomial tensor. Assume that $C_{n,m}(\IntWidth, \IntCenter)$ can be computed with maximum (entrywise) error of at most $\varepsilon_C$, each coefficient of our polynomials is bounded in magnitude by $N$, and the dimension we are transforming in has degree $D$.  In this case the error introduced in the transformation is at most $\varepsilon_C N D$ for each coefficient. So if we know what $\varepsilon_C$ is, we can bound this error and add it to the tracked approximation error at each step.

\subsubsection{Stability of recurrence relation}

While we cannot currently prove any useful bounds on the error involved in creating the Chebyshev transformation matrix, there are a few reasons (listed below) to believe it should be well behaved numerically. Additionally, the results of numerical experiments seem to show that it is well behaved. 

First, the following theorem shows the terms in the matrix are bounded in magnitude by $2$, which suggests the recurrence relation in \eqref{recurr_rel} should be stable and any errors that are introduced should not grow.

\begin{theorem} \label{thm:Bound On Matrix Terms}
If $\abs{\IntWidth} + \abs{\IntCenter} \le 1$, then
$\abs{\ChebMatrix_{n,m}(\IntWidth, \IntCenter)} \le 
    \begin{cases} 
      1 & n = 0 \\
      2 & \text{otherwise} 
   \end{cases}
$
\end{theorem}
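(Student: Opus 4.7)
The plan is to observe that the condition $|\alpha|+|\beta|\le 1$ forces the image $\alpha x+\beta$ to remain in $[-1,1]$ for $x\in[-1,1]$, and then to bound the Chebyshev coefficients of the bounded function $T_m(\alpha x+\beta)$ by way of the standard orthogonality integral.

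First I would note that for any $x\in[-1,1]$,
\[
|\alpha x + \beta| \le |\alpha|\,|x| + |\beta| \le |\alpha|+|\beta| \le 1,
\]
so $\alpha x+\beta\in[-1,1]$, and therefore $|T_m(\alpha x+\beta)|\le 1$ for all $x\in[-1,1]$ (since every Chebyshev polynomial is bounded by $1$ on $[-1,1]$). This reduces the problem to bounding the Chebyshev coefficients of a function $f(x)=T_m(\alpha x+\beta)$ that satisfies $\|f\|_\infty\le 1$ on $[-1,1]$.

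Next I would invoke the orthogonality of the Chebyshev polynomials with respect to the weight $W(x)=(1-x^2)^{-1/2}$, using the well-known coefficient formulas
\[
C_{0,m}=\frac{1}{\pi}\int_{-1}^{1}\frac{f(x)}{\sqrt{1-x^2}}\,dx,\qquad
C_{n,m}=\frac{2}{\pi}\int_{-1}^{1}\frac{f(x)\,T_n(x)}{\sqrt{1-x^2}}\,dx \quad (n\ge 1).
\]
Combining $|f(x)|\le 1$, $|T_n(x)|\le 1$, and $\int_{-1}^{1}(1-x^2)^{-1/2}dx=\pi$ immediately yields $|C_{0,m}|\le 1$ and $|C_{n,m}|\le 2$ for $n\ge 1$, which is exactly the stated bound.

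There is really no major obstacle here; the only subtlety is making sure that the coefficient conventions used in the paper's Chebyshev transformation matrix $C(\alpha,\beta)$ match the standard orthogonality formulas. If the paper were to use the alternative convention in which the $n=0$ coefficient carries an extra factor of $\tfrac12$, one would simply reinterpret the integrals accordingly and obtain the same bounds; either way the constants $1$ and $2$ come out as claimed. As an alternative route, one could prove the bound inductively from the recurrence \eqref{recurr_rel}, but the orthogonality-integral argument is cleaner because it avoids having to control the sign pattern of the recursion.
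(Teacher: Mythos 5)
Your proof is correct and follows essentially the same route as the paper: both bound $\lvert T_m(\alpha x+\beta)\rvert\le 1$ on $[-1,1]$ under the hypothesis $\lvert\alpha\rvert+\lvert\beta\rvert\le 1$, and then apply the Chebyshev orthogonality-integral formula for the coefficients (the paper passes to the substitution $x=\cos\theta$ while you bound $\lvert T_n(x)\rvert$ directly, but this is a cosmetic difference). The constants $1$ and $2$ arise from the same normalization factors $\pi$ and $\pi/2$ in both arguments.
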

\begin{proof}
From theorem 3.1 in \cite{Trefethen_ApproximationTheory}
we have that
\[\int_{-1}^{1} \frac{f(x)T_k(x)}{\sqrt{1-x^2}} dx = a_k \ChebConst_k \text{ where } \ChebConst_k =            \begin{cases} 
      \pi & k = 0 \\
      \frac{\pi}{2} & \text{otherwise} 
   \end{cases}
\]

Substituting $x = cos(\theta)$ gives 

\begin{equation} \label{eq:Cheb Coeff Formula}
a_k = \int_{0}^{\pi} \frac{f(\cos(\theta))T_k(\cos(\theta))\sin(\theta)}{\gamma_k\sqrt{1-\cos^2(\theta)}} d\theta = \int_{0}^{\pi} \frac{f(\cos(\theta))\cos(k\theta)}{\gamma_k} d\theta
\end{equation}
Thus we have
$$\ChebMatrix_{n,m}(\IntWidth, \IntCenter) = \int_{0}^{\pi} \frac{T_m(\IntWidth \cos(\theta) + \IntCenter)\cos(n\theta)}{\gamma_n} d\theta$$
If $\abs{\IntWidth} + \abs{\IntCenter} \le 1$ the numerator is bounded in magnitude by 1, so
\[
\abs{\ChebMatrix_{n,m}(\IntWidth, \IntCenter)} \le \frac{\pi}{\ChebConst_n} = 
    \begin{cases}
      1 & n = 0 \\
      2 & \text{otherwise.}
   \end{cases}
   \]
\end{proof}

Also, it is easy to verify from \eqref{recurr_rel} that if $M_k$ is the sum of the entries in column $k$, that $M_{k+1} = 2(\IntWidth + \IntCenter) M_k - M_{k-1}$. This has the characteristic equation $\lambda^2 - 2(\IntWidth + \IntCenter) \lambda + 1 = 0$, with eigenvalues $\lambda = \IntWidth + \IntCenter \pm \sqrt{(\IntWidth + \IntCenter)^2 - 1}$. For $\IntWidth + \IntCenter \le 1$, $\abs{\lambda} = 1$. So if errors are introduced the sums of the errors in the columns will stay small. This does not guarantee the magnitude of the individual entries will stay small, but that seems likely. Finally, similarity can be seen between the recurrence relation and Clenshaw's algorithm, which is known to be stable \cite{Clenshaw}. We expect an analysis similar to that of Clenshaw's algorithm could be used to prove stability of the matrix creation.

We can also numerically estimate the error of creating the Chebyshev transformation matrix by creating it with many extra digits of precision and comparing to the standard double-precision result. Randomly choosing $1000$ values of $\alpha$ from $\operatorname{Uniform}(0,1)$ and $\beta$ from $\operatorname{Uniform}(0,\alpha)$, the maximum error observed in any entry of any matrix up to column $100$ is $\num{3.82e-15}$. And for $C(\frac{1}{2},\frac{1}{2})$ which is the matrix used for subdividing intervals, the error is $0$ until column 58 (because of the special nature of division by two in binary arithmetic).

\subsection{Error of zooming in on intervals}

A numerical analyst would rightly be suspicious that error might be introduced by zooming in on intervals. 
The stability of this part of the algorithm depends on three main ideas. First, that the matrix $\LinMatrix$ we invert has similar conditioning to the Jacobian matrix $J$. We must invert $\LinMatrix$ to determine the interval width at the next iteration. However, at each step, entries in $\LinMatrix$ are derivatives of our functions somewhere in the interval. Thus near the root $\LinMatrix$ should be close to the Jacobian matrix $J$.  So $\LinMatrix$ is expected to be reasonably well conditioned when $J$ is, which is approximately the conditioning of the root-finding problem itself. (See Section~\ref{section: convergence} for details on why $\LinMatrix$ behaves as it does.)  We also note that when we scale the interval by $\IntWidth$ it will scale the $i$th column of $J$ by $\IntWidth_i$, which could create an ill-conditioned matrix. We remedy this by preconditioning $\LinMatrix$ with column scaling. In our implementation we scale by an appropriate  power of $2$ (because the standard binary representation of floating-point numbers means this is an almost error-free computation)
to ensure that the maximum  value in every column of $A$ is in the interval $[\frac{1}{2},1)$.

Second, the accuracy of this part of the algorithm only depends on avoiding erroneously shrinking  intervals too much. In order to guarantee that we do not erroneously shrink an interval too much, we check the conditioning of $\LinMatrix$ as we solve, and if it is poorly conditioned we do not shrink further. We continue instead by subdividing and applying the exclusion checks to the subintervals. 

Finally, one might worry that about propagation of a small absolute error from the initial approximation or from trimming nearly-zero coefficients. Although initially insignificant, the error could grow as we zoom in and the values of $\LinMatrix$ shrink. But if at one iteration we scale our interval by $\IntWidth$, one expects the entries of $\LinMatrix$ to also scale by $\IntWidth$, and the relative error to increase by $\frac{1}{\IntWidth}$ on an interval of size $\IntWidth$.  Thus the contribution of this scaled error to the problem on the whole interval remains relatively constant. Since the algorithm converges quadratically, it will only have a few steps at which it can contribute any error at all.

\section{Numerical Tests} \label{section: numerical_tests}

In this section we present timing and accuracy results of two versions of our algorithm: 
\begin{enumerate}
    \item A Python implementation of the Chebyshev polynomial solver part of our method, without the  approximator. These results are presented in Subection~\ref{sec:numerical-poly-only}.
    \item A Python implementation of our combined algorithm (using both our Chebyshev approximator and our Chebyshev solver), which we call \emph{YRoots}.  These results are presented in Subection~\ref{sec:numerical-YRoots}.
\end{enumerate}

\subsection{Chebyshev Polynomial Solver}\label{sec:numerical-poly-only}

In this subsection we present timing and accuracy results of the Chebyshev polynomial solver part of our method, without the  approximator.  To do this we must start with polynomials already expressed in the Chebyshev basis. 

\subsubsection{Accuracy}

The Python implementation of our solver is more accurate than some of the built-in (standard library) functions in Python, and often gives results that are the best possible in double precision (the precision in which we have implemented this), meaning that that the zeros found by our algorithm are the closest possible floating point number to the true zero.

As an example to illustrate this, consider finding the zeros of $T_{1000}(x)$. This is an easy problem  analytically because we know the zeros are at 
$x = \cos{(\frac{k+0.5}{1000}\pi)}$ for $k = 0,1,\dots,999$. However, the numerical error of computing these cosine values of $x$ in Python (using the default double precision) is only accurate to within $\num{4e-16}$; whereas our algorithm computes all the zeros to within $\num{6e-17}$, and $943$ of them are the closest floating point value to the actual zero, so na\"ively comparing our solutions to the numerically computed values of $x = \cos{(\frac{k+0.5}{1000}\pi)}$ does not adequately reflect the accuracy of our results.

The way we have chosen to evaluate the accuracy of the zeros found with our solver is to Newton polish the zeros using $50$ digits of precision in Python's \texttt{mpmath} library. We then can report the distance from our found zero to the high-precision polished zero. But, when doing this, it is useful to remember that the best possible  solution is given by the double-precision floating point number closest to the true zero, which has an error not more than $2^{-54}$ ($\num{5.55e-17}$) for numbers between $0$ and $1$. For example, in Figure~\ref{fig:ChebErros1DMons} we plot (in blue) a histogram for the errors of the zeros found by our algorithm for every Chebyshev monomial $T_d(x)$ of degree $d=1$ to $d=1000$. On the $x$-axis is the size of the error, and on the $y$-axis is the density of found zeros with that error.  On the same plot, we plot (in black) the density of errors of the closest floating point numbers, that is, the distance from the true zero to the closest floating point number. These are very similar, and indeed $92.9\%$ of the zeros found by our algorithm were the closest floating point number to the true zero---that is, our computed zeros were the best possible numerical solution. The worst error for any of our computed zeros is $\num{1.5e-16}$.

\begin{center} \label{fig:ChebErros1DMons}
\begin{figure}[ht]
    \includegraphics{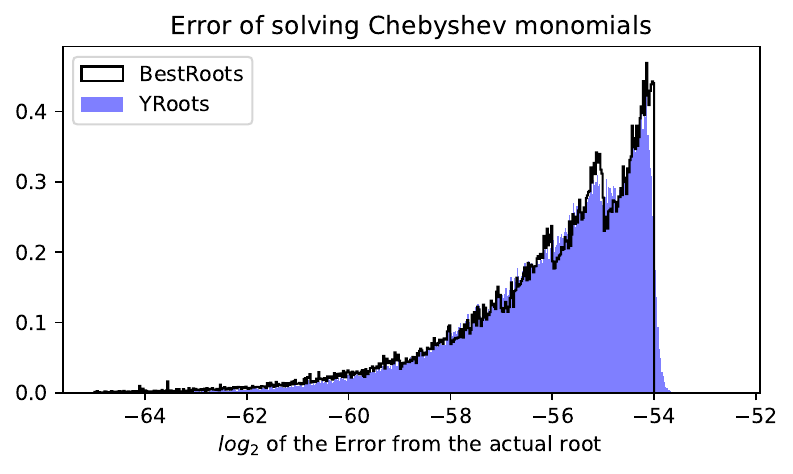}
      \caption{Histogram (in blue) of the errors (distance to the true value) for all of the zeros computed by our algorithm for the first $1000$ Chebyshev monomials $T_d(x)$.  The $x$-axis is the size of the error, and the $y$-axis is the density of the zeros with that error.  Plotted in black is the density of the errors of the best-possible numerical solutions (the nearest floating-point number to the true zero).}
\end{figure}
\end{center}

In higher dimensions our implementation of the polynomial solver (without the approximator) also has good accuracy finding the zeros of random Chebyshev polynomials. When solving the systems used in Figure \ref{fig:ChebPolySolveTimes}, the maximum error is $\num{1e-14}$, and the log average error is $\num{5e-17}$.

\FloatBarrier
\subsubsection{Timing}

Timing results of numerical tests of polynomial systems of increasing degree $\degree$ in dimensions $1$ through $5$, appears to be similar to the conjectured arithmetic complexity of $O(d^{n+1})$, as shown in Figure~\ref{fig:ChebPolySolveTimes}. In dimensions one through three it appears better, most likely because the degree is not big enough for the transformation step to dominate the complexity.

\begin{center} \begin{figure}[ht]
    \includegraphics{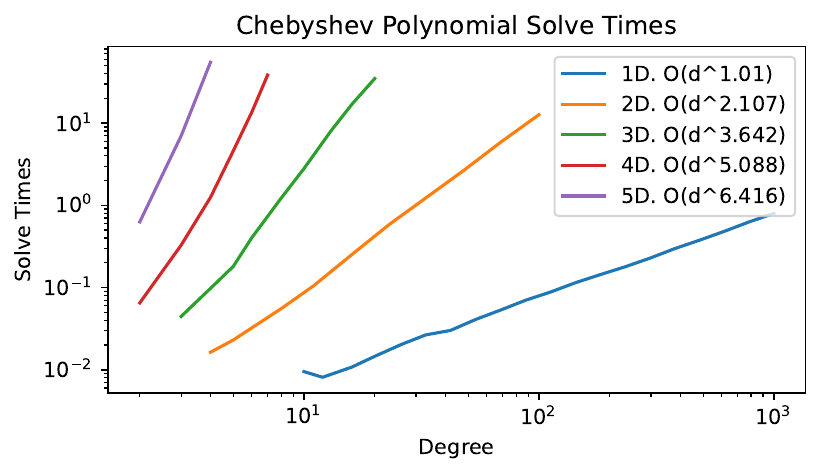}
      \caption{Average time for the Chebyshev polynomial solver to solve ten systems of Chebyshev-basis polynomials of varying total degree $\degree$ with coefficients drawn form the standard normal distribution in dimensions one through five.  The legend gives the observed arithmetic complexity of each, estimated from the slope of the plotted almost-straight lines. As functions of degree $\degree$ the smaller dimensional problems appear to be better even than the conjectured arithmetic complexity of $O(d^{n+1})$ (see Section~\ref{sec:numerical-temporal-complexity}).}
\label{fig:ChebPolySolveTimes}
\end{figure}
\end{center}

\FloatBarrier

\subsection{Full \emph{YRoots} Solver: Comparison to Other Methods}\label{sec:numerical-YRoots}

On a range of tests described below, 
we compared the speed and accuracy of a Python implementation of our combined algorithm (using both our Chebyshev approximator and our Chebyshev solver), which we call \emph{YRoots}, with the following solvers: \emph{Bertini} \cite{Bertini}, which uses a homotopy method; the eigenvalue-based solver of Mourrain, Telen, and Van Barel~\cite{Telen} (which we denote as {\emph{MTV}} in this section), implemented in Julia;  \emph{Chebfun2} (in two dimensions only), which uses Chebyshev proxy and subdivision, as we do, but then uses Bezout resultants to solve the systems on subintervals, and is implemented in MATLAB; and  Mathematica's \emph{Reduce} and \emph{NSolveValues}.  All tests were run on the same machine (a PowerEdge R640 with Intel Xeon Gold 6248 CPU server, 80 cores, and 768GiB RAM).

Our main tests were done on a collection of  randomly generated power-basis ($1,x,x^2,...$) polynomials in each dimension of varying degrees.  We also used the \emph{Chebfun Test Suite} \cite{Chebfun2Test} in two dimensions, which has some nonpolynomial systems; and we constructed some  (somewhat arbitrary) tests of nonpolynomial systems in three and four  dimensions.

\subsubsection{Random Polynomial Times}

In each dimension we generated $300$ polynomials for the random polynomial tests by drawing coefficients for the power-basis (standard monomials of the form $x_1^{k_1}\cdots x_n^{k_n}$) from the standard normal distribution, but setting the constant term to $0$ to ensure that there would be at least one zero in the standard interval $[-1,1]^n$. 

The timing results on the random polynomials are summarized in Figure~\ref{fig:dim23_timings}.
For higher degrees (more than degree ten in two dimensions and more than degree five in three dimensions) our \emph{YRoots} solver is substantially faster than all the other solvers. We expect that implementing \emph{YRoots} in a faster language like Julia or C would also make it much faster and competitive with the \emph{MTV} solver (written in Julia)  in those low-degree cases.   

\begin{figure}[ht]
\begin{center}
\includegraphics[width=\textwidth]{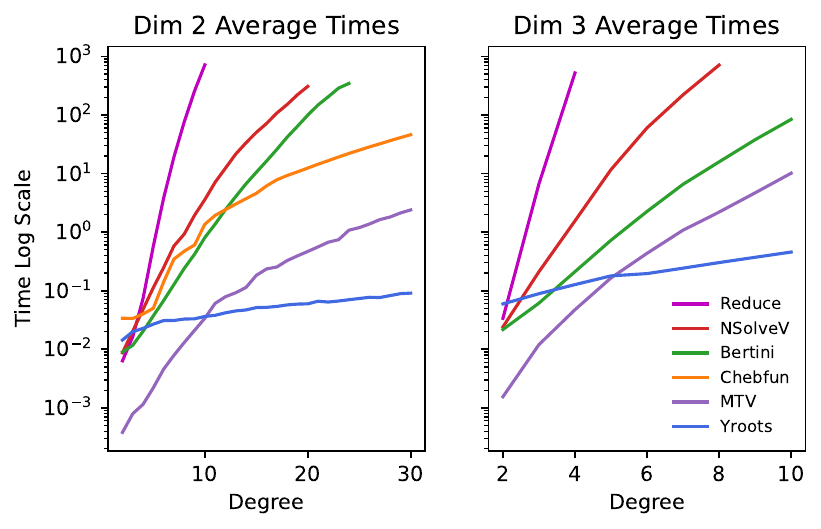}

      \caption{The results of running the random-polynomial tests in two (left panel) and three (right panel) dimensions, with degree on the $x$-axis and the timing in log scale on the $y$-axis. Each of the different methods is plotted as a different colored line.  \emph{Chebfun2} does not appear in the right panel because it is not yet implemented in dimensions greater than two.  We expect that implementing \emph{YRoots} in a faster language  like Julia or C would make it competitive with \emph{MTV} in low degree.}
      \label{fig:dim23_timings}
\end{center}
\end{figure}

\subsubsection{Avoids Undesired Zeros}

One reason our  \emph{YRoots} solver is faster than \emph{Bertini} and the \emph{MTV} solver is that \emph{YRoots} only solves for real zeros inside a given bounded interval (in these tests $[-1,1]^n$), while both of those other methods attempt to find all the zeros in $\C^n$.   \emph{Chebfun2} uses subdivision to reduce the degree of the Chebyshev approximation on each interval, which may allow it to avoid searching for some of the unwanted zeros of the original system, but it uses resultants to find the zeros of the approximation on each subinterval, and resultants also often find additional nonreal zeros (which are then discarded by \emph{Chebfun2}).

For the purposes of our testing, we ran \emph{NSolveValues} and \emph{Reduce} restricted to the standard interval, but they do have the capability to find zeros globally as well. 

The fact that \emph{MTV} and \emph{Bertini} (and, to some extent \emph{Chebfun2}), find more zeros than is needed probably contributes significantly to their completion time, especially in higher-dimensions and higher degree.   When the goal is to find real zeros in a bounded interval, then it is an advantage that our algorithm only finds such zeros and does not spend resources finding unwanted zeros.

\FloatBarrier
\subsubsection{Random Polynomial Accuracy}\label{sec:random-polynomial-results}

To evaluate the accuracy of the the computed zeros in the random polynomial tests, we use \emph{residuals}, which are the  values of the original functions (not the Chebyshev approximations) at the computed zeros.  If a computed zero is perfectly correct, then the residual should be zero.  For methods that also find zeros outside the real interval $[-1,1]^n$, the residuals were only computed for the zeros that do lie inside the interval.  

The maximum (worst) residuals for each solver in each degree in the polynomial tests in dimensions 2 and 3 are plotted in Figure~\ref{fig:residuals}.  In the higher-degree tests,  \emph{Reduce} failed to terminate in a reasonable amount of time (see Figure~\ref{fig:dim23_timings} for timings) and \emph{NSolveValues} terminated but failed to find all the zeros.  All the other methods found the same set of zeros (the same number of zeros and approximately the same locations for those zeros) for each system in the test set.  Moreover, since \emph{MTV} is guaranteed (within the limits of the stability of the eigenvalue solver) to find all of the zeros of each system, and \emph{YRoots} is guaranteed to find bounding intervals for all real zeros in the given interval, the fact that they and \emph{ChebFun2} and \emph{Bertini} all agreed on the number and location of the zeros gives us reason to believe that the only methods that missed any roots were \emph{Reduce} and \emph{NSolveValues}. 

Most of the solvers, including \emph{YRoots}, have consistently good results, but \emph{MTV} has residuals that are fairly consistently a factor of about 100 larger than most of the other methods, and \emph{NSolveValues} has residuals that are a factor of about $10^5$ larger than most of the methods.  This is probably due to the fact that MTV returns the results of a direct eigenvalue solver and has no subsequent refinement of its results, whereas Bertini and YRoots iteratively refine the results.  

\emph{Bertini} refines results with Newton's method to arbitrary precision.  \emph{YRoots} also refines roots by iteratively shrinking the containing interval for the approximating polynomials; but, unlike Newton's method, which is applied to the orriginal functions, the accuracy of the final \emph{YRoots} result is potentially limited by the accuracy of the Chebyshev approximation.  The fact that the \emph{YRoots} residuals are better than those of \emph{Bertini} shows that the Chebyshev approximation error is not having a significant negative effect on the final accuracy of the combined \emph{YRoots} solver.

\begin{figure}[ht]
\begin{center}    \includegraphics[width=\textwidth]{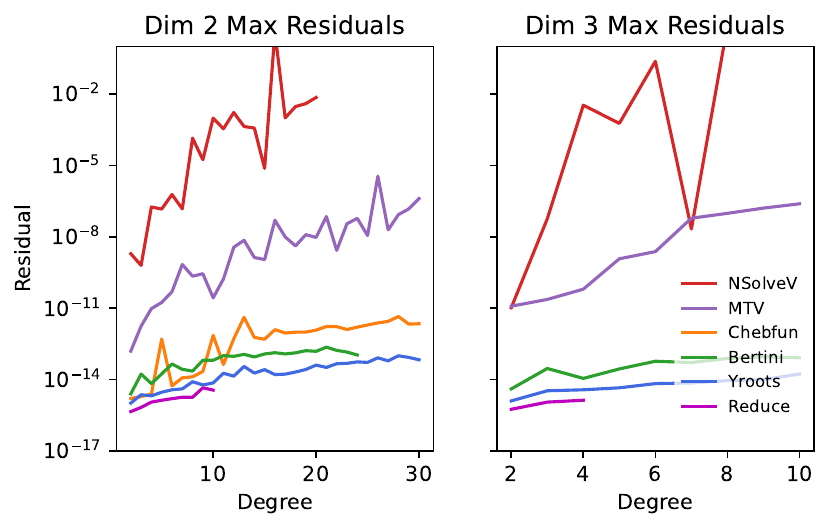}
\end{center}
      \caption{The maximum (worst) residuals for the solutions found by each of the the different solvers on the 2-dimensional (left panel) and 3-dimensional (right panel) random-polynomial tests, with degree on the $x$-axis and the residual in log scale on the $y$-axis. Each of the different methods plotted as a different colored line. The lines for \emph{Reduce} and \emph{NSolveValues} are only plotted for low degrees because in all the missing higher degrees they failed to terminate, or when they terminated they failed to find all of the roots.  Note that for all of these methods, if the final results are sufficiently close to the correct answers, then the computed roots could be Newton polished to arbitrary precision.  In such cases, the the practical impact of the differences in residual is minimal.  For more on this, see the discussion in Subsection~\ref{sec:random-polynomial-results}.}
      \label{fig:residuals}
 \end{figure}

\FloatBarrier
\subsubsection{Chebfun Test Suite and Higher-Dimensional Nonpolynomial Tests}

In addition to tests on polynomials, we ran timing and residual tests for the various solvers on the Chebfun Test Suite, which is a collection of purely two-dimensional zero-finding problems.  We also created some of our own nonpolynomial tests in dimensions three and four.

Although some of the Chebfun Test Suite involves systems of power-basis polynomials,  several Chebfun tests  involve nonpolynomial functions.  \emph{Bertini} and \emph{MTV} are not included in these results because they only work on polynomials expressed in the power basis. 

The results on the Chebfun Test Suite are plotted in Figure~\ref{fig:chebfun_times} (timing) and Figure~\ref{fig:chebfun_residuals} (residuals).  For most of tests, the solve times for \emph{YRoots} were roughly comparable to \emph{NSolveValues} and faster than \emph{Chebfun2} by a factor of about $10$.  Solve times for \emph{Reduce} were much more variable, sometimes substantially beating all competitors (Test 1.5), and sometimes much slower than all others (Test 8.2) and sometimes failing completely (Test 1.2). 
Residuals on the Chebfun Test Suite are mostly below $10^{-13}$ for all the solvers, with a few notable exceptions for each of the solvers.  

Only \emph{YRoots}, \emph{NSolveValues}, and \emph{Reduce} are able to solve higher-dimensional nonpolynomial problems.  On our higher-dimensional tests all three of these solvers mostly had good residuals and fairly similar solve times, except that there were some problems that \emph{YRoots} could solve correctly but \emph{Reduce} and \emph{NSolveValues} could not solve at all (missing zeros or did not terminate).

\begin{center}
\begin{figure}[ht]
    \includegraphics[width=\textwidth]{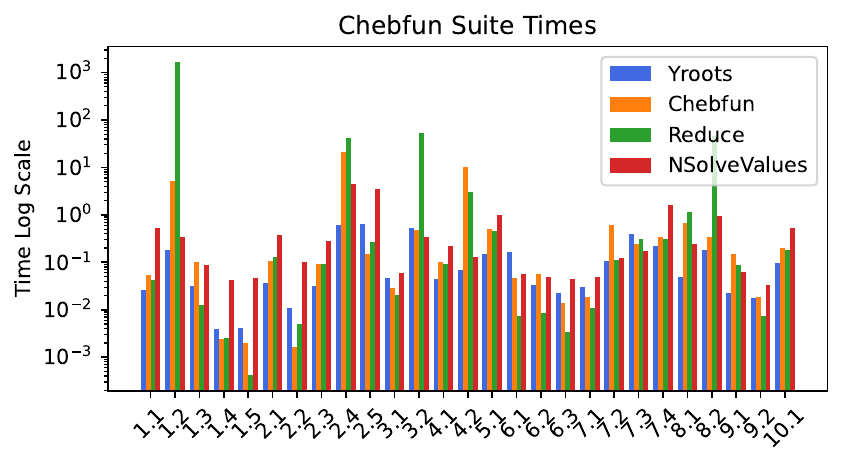}
      \caption{Time to solve each of the Chebfun Test Suite tests, using each of the solvers that work with nonpolynomial functions. \emph{Reduce} failed to solve Test 1.2 at all, so its time on that test is plotted as infinite.}
      \label{fig:chebfun_times}
 \end{figure}
\end{center}

\begin{center}
\begin{figure}[ht]
    \includegraphics[width=\textwidth]{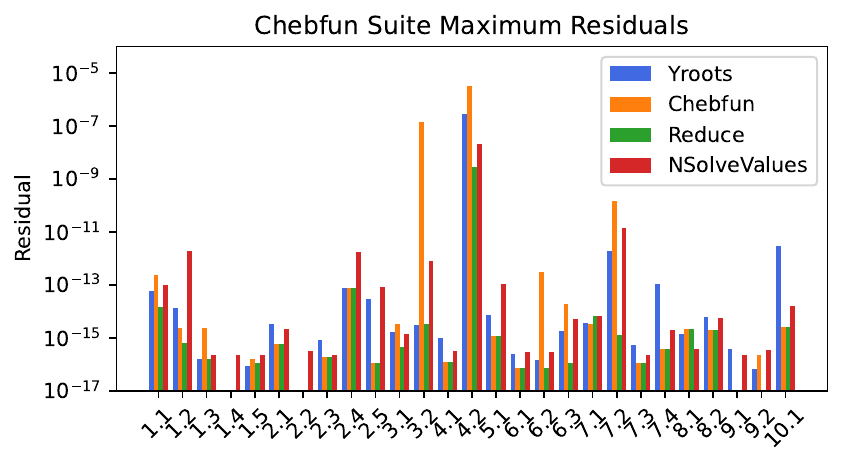}
      \caption{Maximum of the residuals on each of the Chebfun Test Suite tests, using each of the solvers that work with nonpolynomial functions.}
      \label{fig:chebfun_residuals}
 \end{figure}
\end{center}

\FloatBarrier
\subsubsection{Multiple and Near-multiple Zeros}
\label{sec:multiple-numerical}

Finding multiple and near-multiple zeros of a polynomial system is an ill conditioned problem, meaning that even tiny changes in the coefficients of the system result in large changes to the location of the root(s). Hence no numerical algorithm can be expected to solve these  well.  Nevertheless, we have performed some numerical tests comparing the performance of our implementation \emph{YRoots} with other standard solvers.

A particularly challenging collection of near-multiple zeros are the \emph{devastating examples} of Noferini and Townsend \cite{Noferini, OurMacaulayPaper}.  
These are all of the form 
\[
\begin{pmatrix}
    x_1^2\\
    x_2^2\\
    \vdots\\
    x_n^2
\end{pmatrix} + \varepsilon Q \begin{pmatrix}
    x_1\\
    x_2\\
    \vdots\\
    x_n
\end{pmatrix},
\]
for an orthonormal $Q$ and small values of $\varepsilon$.  We fixed a choice of $Q$ in each dimension from $2$ to $6$ and tested \emph{Yroots},\emph{Bertini}, \emph{MTV}, \emph{NSolveValues}, and \emph{Reduce} on this example for the near-multiple roots with  $\varepsilon \in \{10^{-2},10^{-3},\dots, 10^{-8}\}$ and for the true multiple root when $\varepsilon=0$.  
The results for the test in dimension $3$ are given in Figure~\ref{fig:DevEx3}.  
The correct number of real roots is $4$ for the particular example we tested in dimension $3$. For $\varepsilon \le 10^{-6}$ \emph{YRoots} returns fewer than $4$ roots but gives bounding boxes that contain all the roots---just some bounding boxes contain more than one root.  \emph{MTV} loses roots already at $\varepsilon = 10^{-3}$ and does not return bounding boxes, whereas \emph{Bertini} finds too many roots when $\varepsilon\le 10^{-7}$, including $8$ zeros instead of $1$ for the lone true multiple zero at the origin when $\varepsilon=0$.  Computation time of \emph{YRoots} on these is not much slower than most of the other solvers, and is much faster than \emph{Reduce}.  All the solvers have good residuals except \emph{MTV} whose max residuals are very large.

The results of our tests in other dimensions were qualitatively similar to this one in dimension $3$.
\begin{figure}
\includegraphics[width=\textwidth]{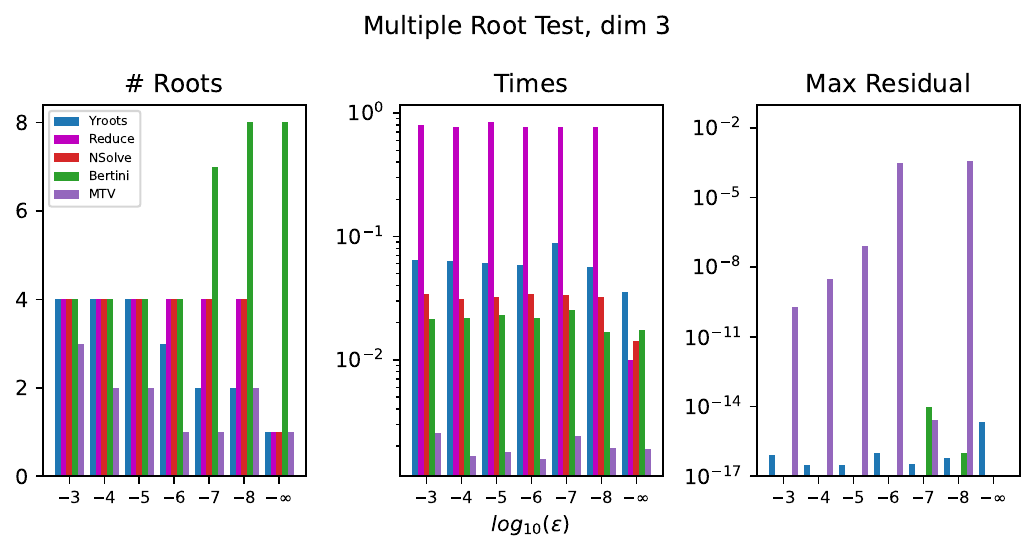}
\caption{Results of tests on the Noferini-Townsend devastating example for a fixed orthonormal matrix $Q$ and $\varepsilon \in \{10^{-3}, \dots, 10^{-8}, 0\}$. The true number of real zeros when $\varepsilon>0$ is $4$.}
\label{fig:DevEx3}
\end{figure}

\FloatBarrier
\section{Conclusion} \label{section: conclusion}
Our novel Chebyshev polynomial solver is able to find common zeros of a dense system of polynomials in the Chebyshev basis with extreme accuracy, with comparable or better speed than existing methods.  When combined with existing Chebyshev approximation methods, it can find common zeros of almost any sufficiently smooth system of equations.

There are some potential adjustments that could be made to further improve the algorithm. Better elimination checks or reduction methods could speed up the solver. Using the low-rank approximation methods of \emph{Chebfun2} might speed up solve time in higher dimensions. Finally, finding a way to do faster multiplication by the Chebyshev transformation matrix would also significantly increase the speed of the algorithm.

\appendix

% \section{Computing the Approximation Error Bound in dimension $n>1$}\label{appendix:ApproxError}

% The computation of the approximation error bound $\ApproxError$ in $n>1$ dimensions is similar to one dimension (see Section~\ref{sec:ApproxError}), but involves some additional bookkeeping and is messier to write out.  

% When computing the approximation, an early step is to compute the degree $d_i$ in each coordinate $i$, as described in Section~\ref{sec:ApproxDegree}.
%  In that process, we have an approximation $\sum a_{\k} T_{k_1}(x_1) \dots T_{k_n}(x_n)$. For the given coordinate $m$ and degree $j$ define $b_j$ = $\prod_{\ell \neq m} \frac{1}{d_\ell + 1} \sum_{k_i = j}{\abs{a_{\k}}}$ (The average of all the terms of this degree). The $b_j$ creates a 1D Chebyshev polynomials
% 
% from which we compute the decay in dimension $i$. As in the 1D case above, we compute the $\rho_i$, along with the value of the maximum degree, to which we say the polynomial has converged, which we call $e_i$.

% Now given our full approximation of degree $\d$, we have $\abs{a_{\k}} \le \prod_{i=1}^n(e_i^{-\max(k_i - d_i, 0)})$. We then compute the approximation error as $\sum_{\k > \d}\abs{a_{\k}}$, where in the sum we have $\k > \d$ to mean $k_i > d_i$ for any $i$. This is a straightforward but tedious to write out geometric sum. It is computed by splitting the sum into each block of $a_{\k}$ where $k_i$ is either less than or greater than $d_i$ in the whole block. Each block is then a straightforward geometric sum, and it is algorithmically easy to sum over all the blocks.

\section{Proofs of Lemmas}

\subsection{Proof of Lemma~\ref{Lemma: Ellipse Intersection}}\label{Proof: Ellipse Intersection}
\begin{proof} 
For the $\IntCenter = 0$ case, we use the ellipse definition $\frac{x^2}{a^2} + \frac{y^2}{b^2} = 1$ where $2a$ and $2b$ are the width and height of the ellipse. Thus $y = \pm b\sqrt{1 - \frac{x^2}{a^2}}$, $\frac{dy}{dx} = \frac{-b\frac{2x}{a}}{\sqrt{a^2 - x^2}}$. So if we have two ellipses centered at $0$ with the same height, the one with the greater width will decrease slower, and thus fully contain the other one. So if $2b = q - q^{-1} = \IntWidth(p - p^{-1})$, then $E_q$ is contained in  $\IntWidth E_p$, and it intersects it so is the smallest such $q$.

For the $\IntCenter = 1 - \IntWidth$ case, note the if $q + q^{-1} = \IntWidth(p + p^{-1}) + 2\IntCenter$, then $E_q$ and $\IntWidth E_p + \IntCenter$ both have a foci of $1$. Call this foci $F$. Let the other foci of $E_q$ by $F_1$, and the other foci of $\IntWidth E_p + \IntCenter$ $F_2$. Let the intersection of the ellipses on the x axis be $P$. The sum of the distances from the foci to any point on the ellipse is constant for an ellipse. So for any point $Q$, $\abs{F_1Q} + \abs{FQ} = \abs{F_1P} + \abs{FP}$ and $\abs{F_2Q} + \abs{FQ} = \abs{F_2P} + \abs{FP}$. So $\abs{F_1Q} - \abs{F_2Q} = \abs{F_1P} - \abs{F_2P}$. As $F_1$, $F_2$ and $P$ are co-linear, $\abs{F_1Q} = \abs{F_1F_2} + \abs{F_2Q}$. So we get $\abs{F_1F_2} = \abs{F_1F_2} + \abs{F_2Q}$. By the triangle inequality this will only be true on the $x$-axis, and everywhere else $E_q$ will be outside of $\IntWidth E_p + \IntCenter$.
\end{proof}

\subsection{Proof of Theorem~\ref{thm: Tau Bound}} \label{Proof: Tau Bound}
\begin{proof}
Lemma~\ref{Lemma: Tau Inf Bound} gives $\tau_{\IntWidth,\IntCenter} \le \inf_{p>1} \frac{\log{(q})}{\log{(p)}}$. And Lemma~\ref{Lemma: Ellipse Intersection} implies that if $\IntCenter = 0$, then $t = \frac{1}{2}(q - q^{-1}) = \frac{\IntWidth}{2}(p - p^{-1})$ for $t > 0$. Thus  $q = t + \sqrt{t^2+1}$. $p = \frac{t}{\IntWidth} + \sqrt{(\frac{t}{\IntWidth})^2+1}$. $\tau_{\IntWidth,0} \le 
\inf_{t > 0}\frac{\log{(t + \sqrt{t^2+1})}}{\log{(\frac{t}{\IntWidth} + \sqrt{(\frac{t}{\IntWidth})^2+1})}} 
\le 
\lim_{t \to 0} \frac{\log{(t + \sqrt{t^2+1})}}{\log{(\frac{t}{\IntWidth} + \sqrt{(\frac{t}{\IntWidth})^2+1})}}
=
\lim_{t \to 0} \frac{\log{(t + 1 + \frac{t^2}{2})}}{\log{(\frac{t}{\IntWidth} + 1 + \frac{t^2}{2\IntWidth^2})}}
=
\lim_{t \to 0} \frac{\log{(1 + t)}}{\log{(1 + \frac{t}{\IntWidth})}}
=
\lim_{t \to 0} \frac{t}{\frac{t}{\IntWidth}}
= \IntWidth
$.

Lemma~\ref{Lemma: Ellipse Intersection} implies that  if $\IntCenter = 1-\IntWidth$, then $t = \frac{1}{2}(q + q^{-1}) = \frac{\IntWidth}{2}(p + p^{-1}) + \IntCenter$ for $t > 1$. So $q = t + \sqrt{t^2-1}$. $p = \frac{t-\IntCenter}{\IntWidth} + \sqrt{(\frac{t-\IntCenter}{\IntWidth})^2-1}$. $\tau_{\IntWidth,1-\IntWidth} \le 
\inf_{t > 1}\frac{\log{(t + \sqrt{t^2-1})}}{\log{(\frac{t-\IntCenter}{\IntWidth} + \sqrt{(\frac{t-\IntCenter}{\IntWidth})^2-1})}}$. Letting $t = 1 + \varepsilon$, gives $\tau_{\IntWidth,1-\IntWidth} \le 
\lim_{\varepsilon \to 0} \frac{\log{(1 + \varepsilon + \sqrt{2 \varepsilon + \varepsilon^2})}}{\log{(1 + \frac{\varepsilon}{\IntWidth} + \sqrt{2\frac{\varepsilon}{\IntWidth} + (\frac{\varepsilon}{\IntWidth})^2})}}
=
\lim_{\varepsilon \to 0} \frac{\log{(1 + \varepsilon + \sqrt{2\varepsilon})}}{\log{(1 + \frac{\varepsilon}{\IntWidth} + \sqrt{2\frac{\varepsilon}{\IntWidth}})}}
=
\lim_{\varepsilon \to 0} \frac{\varepsilon + \sqrt{2\varepsilon}}{\frac{\varepsilon}{\IntWidth} + \sqrt{2\frac{\varepsilon}{\IntWidth}}}
= \sqrt{\IntWidth}
$.
\end{proof}

\bibliographystyle{alpha} 
%\bibliography{ref}
\newcommand{\etalchar}[1]{$^{#1}$}

\end{document}